\title[Extensions and corona decompositions of intrinsic Lipschitz graphs]{Extensions and corona decompositions of low-dimensional intrinsic Lipschitz graphs in Heisenberg groups}
\author{Daniela Di Donato}
\author{Katrin F\"assler}
\address{Department of Mathematics and Statistics\\
University of Jyv\"{a}skyl\"{a}, P.O. Box 35 (MaD)\\FI-40014 University of Jyv\"{a}skyl\"{a}\\
Finland}
 \email{daniela.d.didonato@jyu.fi}
\email{katrin.s.fassler@jyu.fi}
\thanks{D.D.D. is partially supported by the Academy of Finland (grant
288501 `\emph{Geometry of subRiemannian groups}' and by grant
322898 `\emph{Sub-Riemannian Geometry via Metric-geometry and
Lie-group Theory}') and by the European Research Council
 (ERC Starting Grant 713998 GeoMeG `\emph{Geometry of Metric Groups}').  K.F. is supported by the Academy of Finland through the grant 321696
`Singular integrals, harmonic functions, and boundary regularity
in Heisenberg groups'. }
\newcommand{\R}{\mathbb{R}}
\newcommand{\W}{\mathbb{W}}
\newcommand{\He}{\mathbb{H}}
\newcommand{\N}{\mathbb{N}}
\newcommand{\Z}{\mathbb{Z}}
\newcommand{\calT}{\mathcal{T}}
\newcommand{\calD}{\mathcal{D}}
\newcommand{\V}{\mathbb{V}}
\def\Barint_#1{\mathchoice
          {\mathop{\vrule width 6pt height 3 pt depth -2.5pt
                  \kern -8pt \intop}\nolimits_{#1}}%
          {\mathop{\vrule width 5pt height 3 pt depth -2.6pt
                  \kern -6pt \intop}\nolimits_{#1}}%
          {\mathop{\vrule width 5pt height 3 pt depth -2.6pt
                  \kern -6pt \intop}\nolimits_{#1}}%
          {\mathop{\vrule width 5pt height 3 pt depth -2.6pt
                  \kern -6pt \intop}\nolimits_{#1}}}
\numberwithin{equation}{section}
\theoremstyle{plain}
\newtheorem{thm}[equation]{Theorem}
\newtheorem{lemma}[equation]{Lemma}
\newtheorem{cor}[equation]{Corollary}
\newtheorem{proposition}[equation]{Proposition}
\theoremstyle{definition}
\newtheorem{definition}[equation]{Definition}
\theoremstyle{remark}
\newtheorem{remark}[equation]{Remark}
\newcommand{\nref}[1]{(\hyperref[#1]{#1})}
\begin{document}

\begin{abstract}
This note concerns low-dimensional intrinsic Lipschitz graphs, in
the sense of Franchi, Serapioni, and Serra Cassano, in the
Heisenberg group $\mathbb{H}^n$, $n\in \mathbb{N}$.  For $1\leq
k\leq n$, we show that every intrinsic $L$-Lipschitz graph over a
subset of a $k$-dimensional horizontal subgroup $\mathbb{V}$ of
$\mathbb{H}^n$ can be extended to an intrinsic $L'$-Lipschitz
graph over the entire subgroup $\mathbb{V}$, where $L'$ depends
only on $L$, $k$, and $n$. We further prove that $1$-dimensional
intrinsic $1$-Lipschitz graphs in $\mathbb{H}^n$, $n\in
\mathbb{N}$, admit corona decompositions by intrinsic Lipschitz
graphs with smaller Lipschitz constants. This complements results
that were known previously only in the first Heisenberg group
$\mathbb{H}^1$. The main difference to this case arises from the
fact that for $1\leq k<n$, the complementary vertical subgroups of
$k$-dimensional horizontal subgroups in $\mathbb{H}^n$ are not
commutative.
\end{abstract}

\maketitle

\section{Introduction}

This note deals with low-dimensional intrinsic Lipschitz graphs in
Heisenberg groups. The \emph{$n$-th Heisenberg group
$\mathbb{H}^n$} is the set $\mathbb{R}^{2n+1}$ with the group
product ``$\cdot$'' given by
\begin{displaymath}
(x_1,\ldots,x_{2n},t)\cdot
(x_1',\ldots,x_{2n}',t')=\left(x_1+x_1',\ldots,x_{2n}+x_{2n}',t+t'+\tfrac{1}{2}\sum_{i=1}^n
x_i x_{n+i}'-x_i'x_{n+i}\right)
\end{displaymath}
for
$(x_1,\ldots,x_{2n},t),(x_1',\ldots,x_{2n}',t')\in\mathbb{R}^{2n+1}$.
We equip $\mathbb{H}^n$ with the left-invariant metric
\begin{equation}\label{eq:metric}
d(p,q):=\|q^{-1}\cdot p\|,\quad p,q\in \mathbb{H}^n,
\end{equation}
where $\|(x,t)\|:=\max\{|x|,\sqrt{|t|}\}$ and $|\cdot|$ denotes
the usual Euclidean norm on $\mathbb{R}^{2n}$.

\emph{Intrinsic Lipschitz graphs (iLG)} in $\mathbb{H}^n$ were
introduced by Franchi, Serapioni, and Serra Cassano in \cite{FSS}.
The definition of \emph{codimension-$1$} iLG is motivated by their
appearance in connection with a structure theorem for sets of
finite perimeter \cite{FSSC}, see also
\cite{NY,naor2020foliated,orponen2020subelliptic} for various
applications of such sets. The definition of iLG makes  perfect
sense also for \emph{low dimensions}, but there are fewer works
that study specifically low-dimensional iLG. Recently, they have
appeared in \cite{antonelli2020intrinsically,antonellimerlo2}. To
the best of our knowledge, $1$-dimensional iLG in $\mathbb{H}^1$
were first applied by Orponen and the second author in
\cite{fssler2019singular} to prove the boundedness of certain
singular integral operators on regular curves in $\mathbb{H}^1$.
The results of the present paper constitute a first step towards
the generalization of \cite{fssler2019singular} to higher
dimensional Heisenberg groups. At the same time, we believe that
the results are of independent interest in geometric measure
theory, as they complement the list of fundamental properties that
low-dimensional iLG share with Euclidean Lipschitz graphs. This is
our first main result:

\begin{thm}[Intrinsic Lipschitz extension]\label{t:MainIntro}
Let $n\in \mathbb{N}$, $k\in \{1,\ldots,n\}$, and assume that
$\mathbb{V}$ is a $k$-dimensional horizontal subgroup of
$\mathbb{H}^n$ with complementary vertical subgroup $\mathbb{W}$.
Then, for every $L\geq 0$, there exists a constant
$L'=L'(L,k,n)\geq 0$ such that every intrinsic $L$-Lipschitz
function $\phi:E \to \mathbb{W}$, defined on a subset $E\subset
\mathbb{V}$, can be extended to an intrinsic $L'$-Lipschitz
function $\overline{\phi}:\mathbb{V}\to\mathbb{W}$. Moreover, if
$k=n=1$, then one can take $L'= C(n) \max\{L,L^2\}$ where $C(n) \geq 1$ is a constant depending only on $n.$
\end{thm}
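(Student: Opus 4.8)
The plan is to reduce to a standard splitting and a two‑part reformulation of the intrinsic Lipschitz condition, then to settle the abelian case $k=n$ (which includes the sharp statement $k=n=1$) by a Whitney $C^{1,1}$‑extension, and finally to handle the genuinely new case $1\le k<n$ by a Kirszbraun extension of the horizontal part followed by a one‑dimensional extension of the central coordinate; the latter is where the non‑commutativity of $\mathbb{W}$ is the main obstacle.

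\textbf{Step 1 (normalization and reformulation).} After a rotation and, if necessary, a further graph automorphism of $\mathbb{H}^n$ we may assume $\mathbb{V}=\{(x_1,\dots,x_k,0,\dots,0;0)\}$ and $\mathbb{W}=\{(0,\dots,0,x_{k+1},\dots,x_{2n};t)\}$. Write $\phi=(\phi^h,\phi_t)$, with $\phi^h:E\to\mathbb{R}^{2n-k}$ the coordinates along $X_{k+1},\dots,X_n,Y_1,\dots,Y_n$ and $\phi_t:E\to\mathbb{R}$ the central coordinate, and set $\Phi(v):=v\cdot\phi(v)$. A direct computation with the group law gives $\pi_{\mathbb{V}}(\Phi(v)^{-1}\Phi(v'))=v^{-1}v'$ with $\|v^{-1}v'\|=|v-v'|$, while the horizontal part of $\pi_{\mathbb{W}}(\Phi(v)^{-1}\Phi(v'))$ equals $\phi^h(v')-\phi^h(v)$ and its central part equals $\phi_t(v')-\phi_t(v)+\langle v'-v,\beta(v)\rangle+T(v,v')$, where $\beta$ is the $(Y_1,\dots,Y_k)$‑part of $\phi^h$ and $T(v,v')=\tfrac12\bigl(\langle\sigma(v'),\gamma(v)\rangle-\langle\sigma(v),\gamma(v')\rangle\bigr)$ with $\sigma,\gamma$ the $(X_{k+1},\dots,X_n)$‑ and $(Y_{k+1},\dots,Y_n)$‑parts. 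Since $\|(x_W,t_W)\|=\max\{|x_W|,\sqrt{|t_W|}\}$, it follows that $\phi$ is intrinsic $L$‑Lipschitz if and only if, for all $v,v'\in E$,
\[
|\phi^h(v')-\phi^h(v)|\le L|v-v'|\quad\text{and}\quad \bigl|\phi_t(v')-\phi_t(v)+\langle v'-v,\beta(v)\rangle+T(v,v')\bigr|\le L^2|v-v'|^2 .
\]
The term $T$ is the only contribution of the non‑commutativity of $\mathbb{W}$ and vanishes exactly when $k=n$.

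\textbf{Step 2 (abelian case, and the sharp constant).} For $k=n$ one has $T\equiv0$ and $\beta=\phi^h$, so the second condition says precisely that the $1$‑jet $v\mapsto(\phi_t(v),-\phi^h(v))$ on $E$ satisfies the $C^{1,1}$‑coherence inequalities with constant $M=\max\{L,L^2\}$. By Whitney's $C^{1,1}$‑extension theorem there is $\overline{\phi_t}\in C^{1,1}(\mathbb{V})$ with $\overline{\phi_t}|_E=\phi_t$, $\nabla\overline{\phi_t}|_E=-\phi^h$, and $\operatorname{Lip}(\nabla\overline{\phi_t})\le C_nM$; set $\overline{\phi^h}:=-\nabla\overline{\phi_t}$. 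Then $\overline{\phi^h}$ extends $\phi^h$, and for $(\overline{\phi^h},\overline{\phi_t})$ the central expression is just the first‑order Taylor remainder of $\overline{\phi_t}$, bounded by $\tfrac12\operatorname{Lip}(\nabla\overline{\phi_t})|v-v'|^2$, so the pair is intrinsic $L'(L,n)$‑Lipschitz. When $k=n=1$ one sharpens the bookkeeping: adding the two central inequalities at $v,v'$ with interchanged roles gives the improved bound $|\phi_h(v')-\phi_h(v)|\le 2L^2|v-v'|$, so $M$ may be taken to be $2L^2$; the one‑variable Whitney extension then yields $\operatorname{Lip}(\overline{\phi_t}\,')\le CL^2$ with $C$ universal, and the two conditions for $(\overline{\phi^h},\overline{\phi_t})$ force only $L'\le\max\{2C,\sqrt{C}\}\max\{L,L^2\}$, which is the asserted bound.

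\textbf{Step 3 (the non‑abelian case $1\le k<n$, and the main obstacle).} Here $T\not\equiv0$, and $T$ is not of Whitney‑jet form: its part linear in $v'-v$ has coefficients bilinear in the merely Lipschitz — hence possibly unbounded — components $\sigma,\gamma$, so Step 2 does not apply. Instead I would first extend $\phi^h$ to $\overline{\phi^h}:\mathbb{V}\to\mathbb{R}^{2n-k}$ by Kirszbraun's theorem with $\operatorname{Lip}(\overline{\phi^h})\le L$, which fixes the extended terms $\overline{T}$ and $\langle\,\cdot\,,\overline\beta(\,\cdot\,)\rangle$ on all of $\mathbb{V}$, and then extend the scalar $\phi_t$. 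For this, observe that for a point $v_0$ not yet in the domain, the values $\tau$ for which adjoining $(v_0,(\overline{\phi^h}(v_0),\tau))$ preserves the intrinsic $L'$‑Lipschitz property are — by the reformulation of Step~1, using that $\overline{\phi^h}$ is already $L\le L'$‑Lipschitz — described by a family of bounded closed intervals of $\mathbb{R}$ (a ``forward'' and a ``backward'' constraint against each domain point $v$), so by the one‑dimensional Helly property their intersection is nonempty as soon as every pairwise intersection is. Verifying the pairwise intersections reduces, via the second inequality of Step~1 on $E$, to reabsorbing differences of the $\overline{T}$‑ and $\overline\beta$‑terms into the quadratic right‑hand side; this succeeds for $L'$ large in terms of $L,k,n$, and this is exactly where the $k,n$‑dependence enters. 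Performing the extension through one explicit selection — e.g.\ the supremum over the domain of the left endpoints of these intervals — rather than iteratively along an increasing chain prevents any uncontrolled growth of the constant. The decisive difficulty, and the only essential departure from the already‑understood case $\mathbb{H}^1$, is precisely the control of $\overline{T}$: the admissible central values at a new point are governed by a quantity that is not affine in $v'-v$ modulo quadratic errors, so one must re‑expand $\overline{T}$ using the Lipschitz bound on $\overline{\phi^h}$ and verify that the resulting intervals still overlap with a uniform quadratic slack.
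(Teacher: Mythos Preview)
Your Steps 1 and 2 are correct and coincide with the paper's approach: the reformulation in Step~1 is the paper's ``tame map'' characterization, and Step~2 is exactly how the paper handles $k=n$ --- extend the last component via Glaeser's $C^{1,1}$ Whitney theorem and define the horizontal part as its gradient. The sharpening for $k=n=1$ also matches.

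Step~3, however, has a genuine gap. The problem is that you extend $\phi^h$ by Kirszbraun \emph{independently} of $\phi_t$, but the two are rigidly coupled: once $\overline{\phi^h}$ is fixed, the infinitesimal form of the intrinsic Lipschitz condition (valid for \emph{any} finite $L'$) forces
\[
\dot{\overline{\phi_t}}=-\overline{\beta}-\tfrac12\bigl(\dot{\overline{\sigma}}\,\overline{\gamma}-\overline{\sigma}\,\dot{\overline{\gamma}}\bigr)\quad\text{a.e.},
\]
so $\overline{\phi_t}$ is determined up to an additive constant. Your Helly argument checks only the consistency of $\overline{\phi_t}(v_0)$ against points of $E$; it does not --- and cannot --- verify the constraint between two \emph{new} points. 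Concretely, take $k=1$, $n=2$, $E=\{0,1\}$, $\phi_2=\phi_3=\phi_4\equiv 0$, $\phi_5(0)=0$, $\phi_5(1)=1$; this is intrinsic $1$-Lipschitz. Any $1$-Lipschitz extension $\overline{\phi^h}=(\overline{\sigma},\overline{\beta},\overline{\gamma})$ vanishing at $0$ and $1$ satisfies $|\overline{\sigma}(s)|,|\overline{\beta}(s)|,|\overline{\gamma}(s)|\le\min(s,1-s)$, and integrating the ODE over $[0,1]$ gives
\[
\bigl|\overline{\phi_t}(1)-\overline{\phi_t}(0)\bigr|\le\int_0^1|\overline{\beta}|+\tfrac12\Bigl|\int_0^1\dot{\overline{\sigma}}\,\overline{\gamma}-\overline{\sigma}\,\dot{\overline{\gamma}}\Bigr|\le\tfrac14+\tfrac14<1,
\]
so no compatible $\overline{\phi_t}$ exists, whatever $L'$ you allow. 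The most natural Kirszbraun choice $\overline{\phi^h}\equiv 0$ makes this even starker: then $\overline{\phi_t}$ would have to be constant.

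The paper's route for $k<n$ is different and avoids this obstruction: it embeds the $k$-dimensional tame map into an $n$-dimensional one defined on the Euclidean Lipschitz graph $\{(x,\phi_{k+1}(x),\dots,\phi_n(x)):x\in E\}\subset\mathbb{R}^n$, applies the $k=n$ Whitney extension there, and then pulls back along a McShane extension of $(\phi_{k+1},\dots,\phi_n)$. Crucially, this lets the Whitney step determine the extension of $\beta$ (as partial derivatives of the extended last component), so the coupling between the horizontal and central parts is respected automatically.
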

We defer the definitions to Section \ref{s:defns} and more precise
statements to Section \ref{s:extension}, and start by discussing
the connection between Theorem \ref{t:MainIntro} and other
Lipschitz extension results.

\medskip

A pair $(X,Y)$ of metric spaces has the \emph{Lipschitz extension
property} if there exists a constant $C>0$ such that, for every
$E\subset X$, every  Lipschitz function $f:E \to Y$  can be
extended to a Lipschitz function $\overline{f}:X\to Y$ with
Lipschitz constant $\mathrm{Lip}(\overline{f})\leq C\,
\mathrm{Lip}(f)$. It is known that the pair of metric spaces
$(\mathbb{R}^k,\mathbb{H}^n)$ has the Lipschitz extension property
if and only if $k\leq n$, see
\cite{MR1421823,MR2545863,FasslerMSc,MR2659687,MR2729637}. Theorem
\ref{t:MainIntro} is related to this result
 since every  intrinsic Lipschitz
function $\phi:\V\to \W$ (as in Theorem \ref{t:MainIntro}) is in
one-to-one correspondence with a (metrically defined) Lipschitz
function
\begin{displaymath}\Phi:E\subset (\mathbb{R}^k,|\cdot|) \to
(\mathbb{H}^n,d)\end{displaymath} for which
 $\Phi(E)$ is an \emph{intrinsic
graph} in the sense of Definition \ref{d:intr_Lip}, see Remark
\ref{r:iLGvsLip}. So the point of Theorem \ref{t:MainIntro} is to
extend the Lipschitz function $\Phi:E \to \mathbb{H}^n$ to
$\overline{\Phi}:\mathbb{R}^k \to \mathbb{H}^n$ in such a way that
the intrinsic graph structure of the image  is preserved.

\medskip

While Theorem \ref{t:MainIntro} thus yields a conclusion that does
not follow from the general Lipschitz extension property of
$(\mathbb{R}^k,\mathbb{H}^n)$ for $k\leq n$, our assumption is
also stron{\-}ger in that the image of the initially given,
partially defined Lipschitz map $\Phi$ is an intrinsic graph. This
additional information is very helpful in the construction of
Lipschitz extensions, and it led us to a proof for Theorem
\ref{t:MainIntro} that is different from the extension methods
used in \cite{MR2659687,MR2729637}. The case $k=n=1$ was proven
before in \cite{fssler2019singular}, but it also follows, with a
different argument, from our proof of Theorem \ref{t:MainIntro}.
 A new phenomenon appears in higher dimensional
Heisenberg groups, where there is a qualitative difference between
the condition for $k$-dimensional iLG in the middle dimension
$k=n$ and in smaller dimensions $k< n$. We establish the case
$k=n$ of Theorem \ref{t:MainIntro} by applying a $C^{1,1}$ version
of Whitney's extension theorem by Glaeser \cite{MR101294} to the
last component of $\phi$. The bridge between \cite{MR101294} and
intrinsic Lipschitz graphs is provided by the infinitesimal
condition appearing in Proposition \ref{l:EquivTameDiffOpenInt}.
The extension theorem in the case $k<n$ can be deduced by suitably
embedding $k$-dimensional graphs into $n$-dimensional graphs and
applying the $k=n$ version of Theorem \ref{t:MainIntro}.

\medskip

Theorem \ref{t:MainIntro} complements extension results for
low-\emph{co}dimensional iLG in $\mathbb{H}^n$, proved in
\cite{MR2836591,NY} for  codimension $1$, and in
\cite{vittone2020lipschitz} for codimension $k\leq n$. The proofs
in \cite{MR2836591,NY} use an argument similar to the classical
McShane Lipschitz extension theorem, which is possible since
$1$-dimensional horizontal subgroups in $\mathbb{H}^n$ can be
equipped with an order structure. The extension result in
\cite{vittone2020lipschitz} is based on a new level set
description of low-codimensional iLG. Neither of these approaches
is available for low-dimensional iLG.

\medskip

Theorem \ref{t:MainIntro}, the results mentioned in the last
paragraph, and the Ra{\-}de{\-}ma{\-}cher-type theorems in
\cite{MR2836591,vittone2020lipschitz,antonelli2020intrinsically}
show that all intrinsic Lipschitz functions between complementary
homogeneous subgroups of $\mathbb{H}^n$ share two fundamental
properties with Euclidean Lipschitz functions: the extension
property and the almost everywhere differentiability. These are
crucial features for applications in geometric measure theory. The
second main result of the present paper establishes an additional
property for $1$-dimensional intrinsic $1$-Lipschitz graphs,
namely a corona decomposition by intrinsic Lipschitz graphs
(possibly over different subgroups) with smaller constants. The
corresponding result for Euclidean Lipschitz graphs plays a
crucial role in the theory of quantitative rectifiability and
singular integrals \cite{10.2307/24896220,MR1251061,DS1}.

\begin{thm}[Intrinsic Lipschitz corona
decomposition]\label{t:coronaIntro} For every $n\in \mathbb{N}$
and $\eta \in (0,1)$, every $1$-dimensional intrinsic
$1$-Lipschitz graph in $\mathbb{H}^n$ admits a corona
decomposition by $1$-dimensional intrinsic $\eta$-Lipschitz
graphs.
\end{thm}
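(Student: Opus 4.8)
The plan is to combine the extension theorem with a David--Semmes-type stopping-time construction, carried out not on a metric flatness coefficient but on the \emph{horizontal part} of the defining function. First I would reduce to a globally defined graph: if $\Gamma=\Gamma_\phi$ with $\phi\colon E\subset\mathbb{V}\to\mathbb{W}$ intrinsic $1$-Lipschitz, Theorem~\ref{t:MainIntro} extends $\phi$ to an intrinsic $L_0(n)$-Lipschitz map on all of $\mathbb{V}$, and a corona decomposition of the enlarged graph restricts to one of $\Gamma_\phi$; so I may assume $E=\mathbb{V}\cong\mathbb{R}$ and set $\gamma(s)=s\cdot\phi(s)$. Since the projection $\mathbb{H}^n\to\mathbb{V}$ is a $1$-Lipschitz group homomorphism, the cone condition makes $\gamma$ bi-Lipschitz onto $\Gamma$; in particular $\Gamma$ is Ahlfors $1$-regular and its Christ--David cubes are comparable via $\gamma$ to dyadic subintervals of $\mathbb{R}$, which I treat interchangeably. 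Writing $\phi=(\phi_h,\phi_v)$ for the splitting into the horizontal-layer component $\phi_h\colon\mathbb{R}\to\mathbb{R}^{2n-1}$ and the central component $\phi_v\colon\mathbb{R}\to\mathbb{R}$, the cone condition forces $\phi_h$ to be classically $\lesssim_n 1$-Lipschitz, while the infinitesimal characterisation in Proposition~\ref{l:EquivTameDiffOpenInt} exhibits $\phi_v$ as ``slaved'' to $\phi_h$: read in coordinates adapted to any horizontal subgroup, the central component of $\gamma(s_0)^{-1}\cdot\gamma(s)$ is, up to controlled errors, an \emph{integral} over $[s_0,s]$ of the horizontal deviation of $\phi_h$. (In $\mathbb{H}^1$ this is the identity $\phi_v=\mathrm{const}-\int\phi_h$; for $1=k<n$ one has to carry along the non-abelian group law of $\mathbb{W}$.)

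For the stopping-time regions I would attach to each dyadic interval $I$ the average direction $a_I:=\tfrac{1}{|I|}\int_I\phi_h'$ and the Dorronsoro coefficient $\Omega(I):=\inf_{A\ \mathrm{affine}}\bigl(\tfrac{1}{|I|}\int_I\bigl|\tfrac{\phi_h-A}{|I|}\bigr|^2\bigr)^{1/2}$. The one-dimensional Dorronsoro inequality together with $\|\phi_h'\|_{\infty}\lesssim_n 1$ gives the Carleson bound $\sum_{I\subset I_0}\Omega(I)^2|I|\lesssim_n|I_0|$ with a \emph{universal} constant, and since $\phi_h'\in L^\infty$ the averages satisfy $\sum_{I\subset I_0}|a_I-a_{I^\ast}|^2|I|\lesssim_n|I_0|$, where $I^\ast$ is the parent of $I$. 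Given the approximation parameter $\varepsilon>0$ of the corona decomposition, fix $\delta=\delta(\varepsilon,n)>0$ small (any $\delta<\varepsilon^3$ will do). Run the usual recursion: a tree $S$ rooted at a cube $Q(S)$ collects the descendants reached before the first cube $Q'$ with $\Omega(Q')>\delta$ or $|a_{Q'}-a_{Q(S)}|>\delta$, and the maximal such $Q'$ seed the next generation of trees. Standard stopping-time/Carleson estimates — comparing the $a$-stopping cubes with a dyadic maximal function, and using the two Carleson bounds for the $\Omega$-stopping — show that the tree tops $\{Q(S)\}_S$ and the complementary bad cubes pack with constants depending only on $\delta$ and $n$, which is precisely the dependence a corona decomposition permits.

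On a tree $S$, put $v_S:=a_{Q(S)}$, let $\mathbb{V}_S$ be the horizontal $1$-subgroup pointing in the direction of $v_S$, and take as model the left coset $L_S:=\gamma(s_{Q(S)})\cdot\mathbb{V}_S$, which is an intrinsic \emph{linear}, hence $\eta$-Lipschitz, graph over $\mathbb{V}_S$. Read in $\mathbb{V}_S$-adapted coordinates, the horizontal deviation of $\gamma$ from $L_S$ on (the window of) a cube $Q\in S$ is $\|\phi_h^{(S)}-\phi_h^{(S)}(s_0)\|_{L^\infty(Q)}$ for $s_0\in Q$; because $\Omega(Q)\le\delta$ and $a_Q-v_S$ stays in a ball of radius $O(\delta)$ throughout $S$, this function is $L^2$-close on $Q$ to an affine map of slope $O(\delta)$ and small intercept, and — being also $\lesssim_n 1$-Lipschitz — is then $L^\infty$-small, namely $\lesssim_n\delta^{2/3}\diam(Q)$. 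Inserting this bound into the slaving integral from the first step controls the central deviation of $\gamma$ from $L_S$ on $Q$ by $\lesssim_n\delta^{2/3}\diam(Q)^2$, and since the homogeneous metric registers the centre through a square root, the two estimates combine to $\dist(\gamma(s),L_S)\lesssim_n\delta^{1/3}\diam(Q)$ for $\gamma(s)$ in the window of $Q$; the reverse inclusion, that $L_S$ stays that close to $\Gamma$, is symmetric. Taking $\delta$ small makes this smaller than $\varepsilon\diam(Q)$, completing the corona decomposition; if one insists on approximating graphs defined on all of $\mathbb{V}_S$, one replaces $L_S$ by an extension furnished by Theorem~\ref{t:MainIntro}, still of constant $\lesssim_n\delta\le\eta$.

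I expect the genuinely hard part, and the reason for this indirect route, to lie in the centre direction. For a $1$-dimensional intrinsic Lipschitz graph $\Gamma\subset\mathbb{H}^n$ the square sum $\sum_Q\beta_\Gamma(Q)^2\diam(Q)$ of metric flatness numbers (with respect to left cosets of horizontal lines) can diverge — a Li--Schul-type phenomenon stemming from the quadratic scaling of the centre — so flatness cannot be the quantity one iterates. What rescues the argument is exactly that the approximating objects are themselves intrinsic Lipschitz graphs, whose central component is slaved to their horizontal one, so it suffices to make the \emph{horizontal} parts agree at every scale — a Dorronsoro/$\mathrm{BMO}$ statement about $\phi_h$ that \emph{does} satisfy the required Carleson bound — and the matching of the central components is then automatic, up to errors already under control. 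Converting this principle into the rigorous estimate ``central deviation $\lesssim_n\delta^{2/3}\diam(Q)^2$'' is where the non-commutativity of $\mathbb{W}$ in the range $1=k<n$ enters, and is the main technical burden absent from the first Heisenberg group.
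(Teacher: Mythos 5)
There is a genuine gap at the heart of your tree-approximation step: you approximate each stopping-time region $S$ by a \emph{single} left coset $L_S=\gamma(s_{Q(S)})\cdot\mathbb{V}_S$ and claim $\dist(\gamma(s),L_S)\lesssim_n\delta^{1/3}\diam(Q)$ for every $Q\in S$ and $s$ in the window of $Q$. This is already false for the horizontal projection. Your stopping rules ($\Omega(Q)\le\delta$ and $|a_Q-a_{Q(S)}|\le\delta$ throughout $S$) only guarantee that over each $Q$ the graph of $\phi_h$ is close to \emph{some} affine map whose slope is within $O(\delta)$ of $v_S$; they say nothing about the intercept of that affine map relative to the coset anchored at the top point. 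Concretely, take $\phi_h'$ equal to $\delta/4$ on the left half of a top interval $Q(\mathcal{T})$ and $0$ on the right half (smoothed): no stopping ever occurs, so the whole dyadic tree below $Q(\mathcal{T})$ is one region, yet a tiny cube $Q$ near the midpoint sits at distance comparable to $\delta\,\diam(Q(\mathcal{T}))$ from every line through $\gamma(s_{Q(\mathcal{T})})$ with direction $a_{Q(\mathcal{T})}$ — far larger than $\delta^{1/3}\diam(Q)$ when $\diam(Q)\ll\diam(Q(\mathcal{T}))$. This is exactly why corona decompositions (Euclidean or intrinsic) approximate each tree by a small-constant \emph{Lipschitz graph}, not by an affine object: within a tree the local affine approximants have nearly parallel directions but drifting intercepts, and the content of the theorem is precisely that they can be glued into one graph with small constant after subtracting a linear map. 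The same conflation undermines your central estimate: the vertical component of the lift of $L_S$ is obtained by integrating from the tree top, so vertical errors accumulate over $[s_{Q(S)},s]$ and are not $\lesssim\delta^{2/3}\diam(Q)^2$ merely because the horizontal deviation is small at scale $\diam(Q)$.

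The paper's route repairs exactly these two points. It applies the David--Semmes corona decomposition to the horizontal map $\psi=(\phi_2,\ldots,\phi_{2n})$ and keeps, for each tree $\mathcal{T}$, the full approximant $\psi_{\mathcal{T}}+L_{\mathcal{T}}$ with $\psi_{\mathcal{T}}$ $\delta$-Lipschitz and $L_{\mathcal{T}}$ linear (Theorem \ref{t:DSAdaptation}), rather than a single coset; a separate lemma then shows that the resulting intrinsic graph is an intrinsic $c_n\sqrt{\eta}$-Lipschitz graph over a tilted horizontal subgroup $\mathbb{V}_L$, which is how Theorem \ref{t:coronaIntro} follows from Theorem \ref{Theorem 3.15n>1}. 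The vertical component is then \emph{constructed}, not estimated a posteriori: $\phi_{\mathcal{T},2n+1}$ is defined by integrating the ODE of Proposition \ref{propositionk=1}, after first enforcing $\psi=\psi_{\mathcal{T}}+L_{\mathcal{T}}$ at the endpoints of all minimal intervals (Corollary \ref{c:DSAdaptBdry}) and inserting a small bump $\xi_S$ in the $(n+1)$-st component of each minimal interval so that the integrals of $\dot\phi_{2n+1}$ and $\dot\phi_{\mathcal{T},2n+1}$ match there; the noncommutative cross terms (the feature you correctly flag for $1=k<n$) are killed by an integration by parts that uses this endpoint matching. Your Dorronsoro-based stopping and the ``slaving'' heuristic are reasonable starting points, but without an approximant that is itself a small-constant Lipschitz graph per tree, and without the endpoint/correction mechanism for the centre, the argument as written does not yield the required approximation at the resolution of every cube.
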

A \emph{corona decomposition} of a $1$-dimensional (intrinsic)
Lipschitz graph $\Gamma$ is a hierarchical partitioning, called
\emph{coronization}, of $\R$ (or a $1$-dimensional horizontal
subgroup) into ``good'' and ``bad'' dyadic intervals,
 where the bad ones are controlled by a Carleson packing
 condition,
 and the good ones can be partitioned into a forest of trees satisfying suitable
 properties.
 In particular, each tree $\mathcal{T}$
 comes with an (intrinsic) Lipschitz graph $\Gamma_{\mathcal{T}}$
 with smaller Lipschitz constant that approximates $\Gamma$ well
at the resolution of the intervals in the tree.  A more precise
statement is given in  Theorem \ref{Theorem 3.15n>1}. Bearing in
mind potential applications to singular integral operators,
Theorem \ref{Theorem 3.15n>1} states the approximation in
parametric form, using maps defined on a common domain, rather
than intrinsic graphs over possibly different horizontal
subgroups.

 In $\mathbb{H}^1$, a version of Theorem \ref{Theorem 3.15n>1}
was known before, see \cite[Theorem 3.15]{fssler2019singular}.
Using related ideas in the context of  $\He^n$, we give a proof
for the case $n>1$.
 Theorem \ref{Theorem 3.15n>1} yields a corona decomposition for $1$-dimensional intrinsic $1$-Lipschitz
 maps.
By fixing the trees of dyadic intervals in the coronization, and
rescaling the components of the map, we also obtain a corona
decomposition for all intrinsic Lipschitz maps with constant
greater than
 $1$,
 as stated in Corollary \ref{Theorem 3.15+}. This generalizes
\cite[Corollary 3.22]{fssler2019singular}.

In order to prove Theorem \ref{Theorem 3.15n>1}, we start by
recalling the  corona decomposition for Euclidean Lipschitz graphs
given by David and Semmes in \cite{MR1251061}, which we then state
for convenience in a slightly different form, Theorem
\ref{t:DSAdaptation}. In a certain sense, it allows to approximate
a $1$-Lipschitz map by $\delta$-Lipschitz maps for given $\delta
\in (0,1)$, up to subtracting linear maps. A common challenge in
the proof of the extension result (Theorem \ref{t:MainIntro})  and
the corona decomposition (Theorem \ref{t:coronaIntro}) is the
presence of nonlinear terms in the intrinsic Lipschitz condition
in dimensions $1\leq k<n$, see Lemma \ref{l:IntrLipExplicit},
which are absent for $k=n$, and in particular  for $n=1$.

\medskip

 \textbf{Structure of the paper.} Most concepts relevant for
the paper are  introduced in Section \ref{s:defns}. Section
\ref{ss:TameVSIntrLip} contains standard computations related to
low-dimensional iLG in Heisenberg groups. Section
\ref{ss:Infinitesimal} provides an infinitesimal characterization
of $1$- and  $n$-dimensional entire iLG in $\mathbb{H}^n$. The
extension result, Theorem \ref{t:MainIntro}, is proven in Section
\ref{s:extension}. The corona decomposition, Theorem
\ref{t:coronaIntro}, is finally given in Section \ref{s:corona}.

\medskip

\textbf{Acknowledgments.} We would like to thank Tuomas Orponen
and an anonymous referee for helpful suggestions.

\section{Definitions}\label{s:defns}

\subsection{Homogeneous subgroups, projections, and intrinsic Lipschitz graphs}
Let $n\in \mathbb{N}$. To introduce the relevant concepts, we
first fix a \emph{horizontal subgroup} $\mathbb{V}$ of
$\mathbb{H}^n$ of dimension $k\in \{1,\ldots,n\}$, which is given
by a set of the form
\begin{displaymath}
\mathbb{V}=V \times \{0\} \subset \mathbb{R}^{2n+1},
\end{displaymath}
where $V$ is a $k$-dimensional isotropic subspace of the standard
symplectic space $\mathbb{R}^{2n}$. This is equivalent to say that
$V$ is a $k$-dimensional subspace of $\mathbb{R}^{2n}$ so that
$(\mathbb{V},\cdot)$ is an abelian group isomorphic to
$(\mathbb{R}^k,+)$, see for instance \cite[Section 2]{MR2955184}.
Equipped with the metric $d$  defined in \eqref{eq:metric}, the
subgroup $\mathbb{V}$ is isometric to $(\mathbb{R}^k,|\cdot|)$.
The \emph{complementary vertical subgroup} $\mathbb{W}$ is given
by the Euclidean orthogonal complement of $\mathbb{V}$, that is,
$\mathbb{W}=V^{\bot}\times \mathbb{R}$.

Every point $p\in \mathbb{H}^n$ has a unique decomposition as
\begin{displaymath}p=\pi_{\mathbb{V}}(p)\cdot
\pi_{\mathbb{W}}(p)\quad\text{with}\quad\pi_{\mathbb{V}}(p)\in
\mathbb{V}\text{ and }\pi_{\mathbb{W}}(p)\in\mathbb{W}.
\end{displaymath}

\begin{definition}\label{d:intr_Lip} Assume that $\mathbb{V}$ and $\mathbb{W}$ are
homogeneous subgroups of $\mathbb{H}^n$ as above. A map $\phi:
E\subset \mathbb{V} \to \mathbb{W}$ is said to be \emph{intrinsic
$L$-Lipschitz} for a constant $L\geq 0$ if
\begin{displaymath}
\|\pi_{\mathbb{W}}(\Phi(v')^{-1}\cdot \Phi(v))\|\leq L
\|\pi_{\mathbb{V}}(\Phi(v')^{-1}\cdot \Phi(v))\|,\quad v,v'\in E,
\end{displaymath}
where $\Phi:E\subset \mathbb{V}\to \mathbb{H}^n$ is the
\emph{graph map} defined by $\Phi(v):= v\cdot \phi(v)$. The
\emph{intrinsic graph} of $\phi$ is the set
\begin{displaymath}
\Gamma:=\{v\cdot \phi(v):\; v\in E\}\subset \mathbb{H}^n,
\end{displaymath}
and we say that $\Gamma$ is an \emph{intrinsic $L$-Lipschitz graph
(over $E\subset \mathbb{V}$)}.
\end{definition}

It follows from \cite[Lemma 2.1]{MR2955184} and the choice of the
metric $d$ that for any pair of complementary homogeneous
subgroups $(\mathbb{V},\mathbb{W})$ and
$(\mathbb{V}',\mathbb{W}')$ as above there exists an isometric
isomorphism $f:(\mathbb{H}^n,d)\to (\mathbb{H}^n,d)$ with the
properties that $f(\mathbb{V})=\mathbb{V}'$,
$f(\mathbb{W})=\mathbb{W}'$ and such that $f$ maps every intrinsic
$L$-Lipschitz graph over a subset in $\mathbb{V}$ to an intrinsic
$L$-Lipschitz graph over a subset in $\mathbb{V}'$. For this
reason it is not restrictive to assume, as we will in the
following unless otherwise stated, that
\begin{equation}\label{eq:coordinates}
\mathbb{V}=\{(x_1,\ldots,x_k,0,\ldots,0)\colon
(x_1,\ldots,x_k)\in\mathbb{R}^k\}\end{equation}and\begin{equation}\label{eq:coordinates2}
\mathbb{W}=\{(0,\ldots,0,x_{k+1},\ldots,x_{2n},t)\colon
(x_{k+1},\ldots,x_{2n},t)\in\mathbb{R}^{2n+1-k}\}.
\end{equation}

\begin{remark}\label{r:iLGvsLip}
It follows from \cite[Proposition 3.7]{MR3511465} that if
$\phi:E\subset \mathbb{V}\to \mathbb{W}$ is intrinsic Lipschitz,
then the associated graph map is a  Lipschitz function $\Phi$ from
$(E,d)$ to $(\mathbb{H}^n,d)$, or from $(E,|\cdot|)$ to
$(\mathbb{H}^n,d)$, if we identify $E$ with a subset of
$\mathbb{R}^k$, using the map
\begin{displaymath}
(x_1,\ldots,x_k,0,\ldots,0)\mapsto (x_1,\ldots,x_k).
\end{displaymath}
Conversely, if $\Phi:(E,|\cdot|)\to (\mathbb{H}^n,d)$ is
$L$-Lipschitz with respect to the given metrics, and we assume in
addition that it is of the form $\Phi(v):= v \cdot \phi(v)\in
\mathbb{V}\cdot \mathbb{W}$ for a map  $\phi: E\subset \mathbb{V}
\to \mathbb{W}$, then $\phi$ is intrinsic Lipschitz since, for all
$v,v'\in E$,
\begin{align*}
\|\pi_{\mathbb{W}}(\Phi(v')^{-1}\cdot \Phi(v))\|&\leq
 d(\Phi(v),\Phi(v')) + \|\pi_{\mathbb{V}}(\Phi(v')^{-1}\cdot \Phi(v))\|\\
&\leq L |v-v'|+ \|\pi_{\mathbb{V}}(\Phi(v')^{-1}\cdot \Phi(v))\|\\
&= (L+1) \|\pi_{\mathbb{V}}(\Phi(v')^{-1}\cdot \Phi(v))\|.
\end{align*}
\end{remark}

Once complementary subgroups as in \eqref{eq:coordinates} and
\eqref{eq:coordinates2} have been fixed, it is convenient to
identify $\phi:E\subset\mathbb{V} \to \mathbb{W}$ with a function
$\phi:E\subset\mathbb{R}^k \to \mathbb{R}^{2n+1-k}$ in the obvious
way. This identification applied to intrinsic Lipschitz functions
leads to the notion of \emph{tame maps} which we discuss in the
next section, see especially Propositions
\ref{p:FromIntrLipToTameGENERAL} and \ref{p:FromTameToIntrLip}.

\subsection{Tame maps} In connection with one-dimensional
intrinsic Lipschitz graphs in $\mathbb{H}^1$, \emph{tame maps}
from subsets of $\mathbb{R}^k$ to $\mathbb{R}^{2n+1-k}$ for
$k=n=1$ were introduced in \cite{fssler2019singular}. We extend
the definition to arbitrary $1\leq k\leq n$ with a slight
adaptation of the notation. Here   $\langle \cdot, \cdot \rangle$
denotes the standard scalar product on $\R^k$.

\begin{definition}\label{d:tame}
Let $k,n\in \N$, $1\leq k\leq n$, $E\subset\mathbb{R}^k$, and
$L_i\geq 0$ for $i\in \{k+1,\ldots,2n+1\}$. We say that a map
$\phi =(\phi_{k+1},\ldots, \phi _{2n+1}) :E \to \R^{2n+1-k}$ is
\emph{$( L_{k+1},\ldots,L_{2n+1})$-tame} if
\begin{enumerate}
\item\label{i:d1}  $\phi_i$ is  Euclidean $L_i$-Lipschitz for
$i=k+1,\ldots,2n$; \item\label{i:d2}  $\psi:=(\phi_{n+1},
\ldots,\phi _{n+k} )$ satisfies the following conditions:
\begin{enumerate} \item if $k=n$, then
\begin{align*}
\left|\phi_{2n+1}(y) - \phi_{2n+1}(x)  -\left\langle \psi (y),
y-x\right\rangle
 \right| &+ \left|\phi_{2n+1}(y) - \phi_{2n+1}(x)  - \left\langle \psi
(x),  y-x\right\rangle  \right|\\& \leq L_{2n+1} |x-y|^2,\quad
x,y\in E,
\end{align*}
\item if $k<n$, then
\begin{align*}
\Big|\phi_{2n+1}(y) - \phi_{2n+1}(x)  -\langle \psi (y),
y-x\rangle &-\tfrac{ 1 }{2}
 \sum_{i=k+1}^n \phi_i(y)\phi_{n+i} (x) -\phi_i(x)\phi_{n+i} (y)\Big| \\
 +\Big| \phi_{2n+1}(y) - \phi_{2n+1}(x)  - \langle \psi
(x), y-x\rangle &-\tfrac{ 1}{ 2} \sum_{i=k+1}^n
\phi_i(y)\phi_{n+i} (x) -\phi_i(x)\phi_{n+i} (y)\Big| \\&\leq
L_{2n+1} |y-x|^2,\quad x,y\in E.
\end{align*}
\end{enumerate}
\end{enumerate}
\end{definition}

\begin{remark}\label{r:2DimTameLip}
Condition \eqref{i:d2} in Definition \ref{d:tame}  is implied
(with twice the constant $L_{2n+1}$) by a one-sided version of
itself:
 \begin{equation*}
\left|\phi_{2n+1}(y) - \phi_{2n+1}(x)  -\left\langle \psi (y),
y-x\right\rangle
 \right| \leq L_{2n+1} |x-y|^2,\quad
x,y\in E,\quad\text{if }k=n, \end{equation*}
and\begin{equation*}
\Big|\phi_{2n+1}(y) -
\phi_{2n+1}(x) -\langle \psi (y), y-x\rangle -\tfrac{ 1 }{2}
 \sum_{i=k+1}^n \phi_i(y)\phi_{n+i} (x) -\phi_i(x)\phi_{n+i} (y)\Big| \leq L_{2n+1}
|y-x|^2,
\end{equation*}
for $x,y\in E$, if $k<n$.
\end{remark}

\begin{remark}\label{r:FromTameToLip} Condition \eqref{i:d2} in Definition
\ref{d:tame} implies by triangle inequality that
\begin{equation}\label{eq:TowardsLip} \left|\left\langle \psi (y)-\psi(x), \frac
{y-x}{|y-x|}\right\rangle\right|\leq L_{2n+1}|x-y|,\quad\text{for
}x,y\in E,x\neq y.\end{equation} If $k=1$, then $\psi=\phi_{n+1}$
is a real-valued function and \eqref{eq:TowardsLip} shows that
$\phi_{n+1}$ is $L_{2n+1}$-Lipschitz. In other words, if $k=1$,
then the Lipschitz continuity of $\phi_{n+1}$ is automatically
implied by part \eqref{i:d2} of Definition \ref{d:tame}, and part
\eqref{i:d1} holds with ``$L_{n+1}$'' replaced by
``$\min\{L_{n+1},L_{2n+1}\}$''.
\end{remark}

\begin{remark} For all $1\leq k\leq n$, Definition \ref{d:tame} implies that $\phi_{2n+1}$ is locally
Lipschitz. This is immediate in the case $k=n$, and if $k<n$, it
follows easily once one has observed that
\begin{align}\label{eq:reformulation}
\sum_{i=k+1}^n \phi_i(y)\phi_{n+i} (x)& -\phi_i(x)\phi_{n+i} (y)
\\&= \sum_{i=k+1}^n \left(\phi_i(y)-\phi_i(x)\right)\phi_{n+i} (x)
-\phi_i(x)\left(\phi_{n+i} (y)-\phi_{n+i}(x)\right).\notag
\end{align}
\end{remark}

\section{Elementary properties of tame maps}

\subsection{Connection between tame maps and intrinsic Lipschitz
functions}\label{ss:TameVSIntrLip}

In this section we explore the connection between intrinsic
Lipschitz functions (as in Definition \ref{d:intr_Lip}) and tame
maps (as in Definition \ref{d:tame}). It is this connection that
initially motivated Definition~\ref{d:tame}. Throughout this
section, we assume that $1\leq k\leq n$, and $\mathbb{V}$ is a
$k$-dimensional horizontal subgroup of $\mathbb{H}^n$ with
complementary vertical subgroup $\mathbb{W}$ with coordinate
expressions as in \eqref{eq:coordinates} and
\eqref{eq:coordinates2}. Slightly abusing notation, we identify a
set  $E\subset\mathbb{V}$ with $E\subset \mathbb{R}^k$, and
$\phi:E\to \mathbb{W}$  with $\phi:E\to \mathbb{R}^{2n+1-k}$.

\begin{lemma}\label{l:IntrLipExplicit}
A function $(\phi_{k+1},\ldots , \phi _{2n+1}) :E\subset
\mathbb{V} \to \mathbb{W}$ is intrinsic $L$-Lipschitz if and only
if
\begin{displaymath}
\| (0,\ldots,0, \phi_{ k+1} (v')-\phi_{ k+1} (v),\dots,
\phi_{2n}(v')-\phi_{2n}(v), H(v,v'))\|\leq L |v'-v|,\quad v,v'\in
E,
\end{displaymath}
where
\begin{displaymath}
H(v,v'):=  \phi_{2n+1}(v')-\phi_{2n+1}(v) + \langle \psi (v),
v'-v\rangle,\quad\text{if }k=n,
\end{displaymath}
and, if $k<n$, then
\begin{displaymath}
H(v,v'):=\phi_{2n+1}(v')-\phi_{2n+1}(v) + \langle \psi (v),
v'-v\rangle +\frac 1 2  \sum_{i=k+1}^n \phi_i(v')\phi_{n+i} (v)  -
\phi_i(v)\phi_{n+i} (v').
\end{displaymath}
\end{lemma}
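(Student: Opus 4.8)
The plan is to unwind the definition of intrinsic $L$-Lipschitz continuity by explicitly computing the group-theoretic projections $\pi_{\mathbb{W}}(\Phi(v')^{-1}\cdot\Phi(v))$ and $\pi_{\mathbb{V}}(\Phi(v')^{-1}\cdot\Phi(v))$ in the fixed coordinates \eqref{eq:coordinates}--\eqref{eq:coordinates2}, and then observe that the stated inequality is exactly the resulting condition rewritten via the explicit norm $\|(x,t)\|=\max\{|x|,\sqrt{|t|}\}$. First I would record the graph map in coordinates: for $v=(v_1,\ldots,v_k)\in E\subset\mathbb{R}^k$ (identified with $(v,0)\in\mathbb{V}$), the point $\Phi(v)=v\cdot\phi(v)$ has horizontal coordinates $(v_1,\ldots,v_k,\phi_{k+1}(v),\ldots,\phi_{2n}(v))$ and vertical coordinate equal to $\phi_{2n+1}(v)$ plus a correction coming from the group law; since $v\in\mathbb{V}$ has its last $2n-k$ horizontal coordinates zero, the symplectic term $\tfrac12\sum_{i=1}^n x_i x_{n+i}' - x_i' x_{n+i}$ simplifies considerably, leaving only the contribution $\tfrac12\sum_{i=1}^k v_i\phi_{n+i}(v)$ when $k=n$ this is $\tfrac12\langle v,\psi(v)\rangle$, and an analogous expression when $k<n$.

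Next I would compute $\Phi(v')^{-1}\cdot\Phi(v)$ using $\Phi(v')^{-1}=(-\Phi(v')_{\mathrm{hor}},-\Phi(v')_t)$ together with the group product. The horizontal part is simply the Euclidean difference of the horizontal coordinates, so its $\mathbb{V}$-component is $(v_1-v_1',\ldots,v_k-v_k',0,\ldots,0)$, giving $\|\pi_{\mathbb{V}}(\Phi(v')^{-1}\cdot\Phi(v))\| = |v-v'|$ since on the horizontal subgroup the norm is Euclidean; this explains the right-hand side $L|v'-v|$. The $\mathbb{W}$-component of $\Phi(v')^{-1}\cdot\Phi(v)$ has horizontal entries $\phi_{k+1}(v')-\phi_{k+1}(v),\ldots,\phi_{2n}(v')-\phi_{2n}(v)$ — wait, one must be careful that the projections $\pi_{\mathbb{V}},\pi_{\mathbb{W}}$ are the \emph{algebraic} splitting $p=\pi_{\mathbb{V}}(p)\cdot\pi_{\mathbb{W}}(p)$, not the Euclidean one; but because $\mathbb{V}$ occupies the first $k$ horizontal slots and $\mathbb{W}$ the remaining slots plus the center, the splitting of horizontal coordinates is still coordinate-wise, and only the center coordinate of $\pi_{\mathbb{W}}(p)$ picks up an extra term from writing $p$ as a product. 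Carrying out this bookkeeping, the vertical coordinate of $\pi_{\mathbb{W}}(\Phi(v')^{-1}\cdot\Phi(v))$ works out to exactly $H(v,v')$ as defined in the statement — in the case $k=n$ the only surviving symplectic cross-term between the $\mathbb{V}$-part $(v-v')$ and the accumulated data is $\langle\psi(v),v'-v\rangle$ (after the $\tfrac12$'s from the two graph points and the splitting combine), and for $k<n$ one additionally retains the bilinear terms $\tfrac12\sum_{i=k+1}^n\phi_i(v')\phi_{n+i}(v)-\phi_i(v)\phi_{n+i}(v')$ coming from the symplectic pairing of the "middle" horizontal coordinates of the two graph points.

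Having identified both projections, the claimed equivalence is immediate: intrinsic $L$-Lipschitz says $\|\pi_{\mathbb{W}}(\cdots)\|\leq L\|\pi_{\mathbb{V}}(\cdots)\|$, the left side is $\|(0,\ldots,0,\phi_{k+1}(v')-\phi_{k+1}(v),\ldots,\phi_{2n}(v')-\phi_{2n}(v),H(v,v'))\|$ and the right side is $L|v'-v|$, which is precisely the displayed inequality. I expect the main obstacle to be purely computational and notational: correctly tracking the several factors of $\tfrac12$ and the signs in the symplectic form $\tfrac12\sum_{i=1}^n x_ix_{n+i}'-x_i'x_{n+i}$, distinguishing the first $k$ indices (which lie in $\mathbb{V}$) from the indices $k+1,\ldots,n$ (whose conjugate partners $n+i$ also lie in the horizontal part of $\mathbb{W}$) from the remaining indices, and verifying that the algebraic projection $\pi_{\mathbb{W}}$ produces exactly the correction $+\langle\psi(v),v'-v\rangle$ rather than, say, $+\langle\psi(v'),v'-v\rangle$ or a symmetrized version — the asymmetry here is what later forces the two-sided conditions in Definition \ref{d:tame} and Remark \ref{r:2DimTameLip}. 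Once the coordinate expression for $\Phi(v')^{-1}\cdot\Phi(v)$ is pinned down, no further ideas are needed; it is a matter of reading off the two projections and applying the definition of $\|\cdot\|$.
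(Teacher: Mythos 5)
Your proposal is correct and follows essentially the same route as the paper: write the graph map in coordinates (with the extra vertical term $\tfrac12\sum_{i=1}^k v_i\phi_{n+i}(v)$), compute $\Phi(v')^{-1}\cdot\Phi(v)$ via the group law, apply the explicit coordinate formulas for $\pi_{\mathbb{V}}$ and $\pi_{\mathbb{W}}$, and read off the stated inequality from the definition of the norm; the bookkeeping points you flag (the surviving symplectic cross-terms for $k<n$ and the asymmetric correction $\langle\psi(v),v'-v\rangle$ produced by the splitting) are exactly the computations carried out in the paper's proof.
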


\begin{proof}
We recall from Definition \ref{d:intr_Lip} that $\phi$ is
intrinsic $L$-Lipschitz  if and only if
\begin{equation}\label{eq:LipCond}
\| \pi_\W(\Phi (v)^{-1} \cdot \Phi (v') )\| \leq L \| \pi_\V(\Phi
(v)^{-1} \cdot \Phi (v')) \|,\quad v,v' \in E.
\end{equation}
The graph map $\Phi$ of $\phi$ is given by
\begin{equation*}
\begin{aligned}
\Phi (v)
=
 \Big(v, \phi_{k+1} (v),\dots ,\phi _{2n} (v), \phi _{2n+1} (v) + \tfrac{ 1}{ 2} \sum_{i=1}^{k} v_i \phi_{n+i} (v)\Big),
\end{aligned}
\end{equation*}
for $v=(v_1,\ldots,v_k)\in E$. Recalling that  $\psi(v)=(
\phi_{n+1}(v),\dots, \phi _{n+k} (v)),$ we observe
\begin{displaymath}
 \Phi (v)^{-1} \cdot \Phi (v')  =\left(v'-v, \phi_{k+1}(v')-\phi_{k+1}(v),\dots,
 \phi_{2n}(v')-\phi_{2n}(v),h(v,v')\right),
\end{displaymath}
where
\begin{displaymath}h(v,v'):=\phi_{2n+1}(v')-\phi_{2n+1}(v) +\tfrac{
1}{ 2} \langle v'-v,\psi(v) + \psi(v')\rangle,\quad \text{if }k=n,
\end{displaymath}
and, if $k<n$, then $h(v,v')$ is equal to
\begin{displaymath}
\phi_{2n+1}(v')-\phi_{2n+1}(v) +\tfrac{ 1}{ 2}\langle v'-v,\psi(v)
+ \psi(v')\rangle-\tfrac{ 1}{ 2} \sum_{i=k+1}^n
\left(\phi_i(v)\phi_{n+i} (v') -\phi_i(v')\phi_{n+i} (v)\right).
\end{displaymath}
Next, since $\pi_\V (x_1,\dots ,x_{2n},t) =  (x_1,\dots,
x_k,0,\dots ,0)$ and
\begin{equation*}
\pi_\W (x_1,\dots ,x_{2n},t) =  \Big(0,\dots, 0, x_{k+1},\dots
,x_{2n},t - \tfrac{ 1}{ 2} \sum_{i=1}^{k}x_i x_{n+i}\Big),
\end{equation*}
the right-hand side of \eqref{eq:LipCond} equals $L|v'-v|$, and
the left-hand side can be written as
\begin{displaymath}
\| \pi_\W(\Phi (v)^{-1} \cdot \Phi (v') )\|=  \| (0,\ldots,0,
\phi_{ k+1} (v')-\phi_{ k+1} (v),\dots,
\phi_{2n}(v')-\phi_{2n}(v), H(v,v'))\|,
\end{displaymath}
for $ v,v'\in E$, where $H(v,v')$ is defined as in the statement
of the lemma.
\end{proof}

Lemma \ref{l:IntrLipExplicit} provides a link between intrinsic
Lipschitz and tame maps. We formulate this in two separate
propositions.

\begin{proposition}\label{p:FromIntrLipToTameGENERAL}
If $\phi=(\phi_{k+1},\ldots , \phi _{2n+1})\colon E\subset
\mathbb{V} \to \mathbb{W}$ is intrinsic $L$-Lipschitz, then
$(\phi_{k+1},\ldots , \phi _{2n}, -\phi _{2n+1})$ is an
$(L_{k+1},\ldots,L_{2n+1})$-tame map from $E\subset \mathbb{R}^k$
to $\mathbb{R}^{2n+1-k}$ with
\begin{equation}\label{eq:LiForm}
L_i= \left\{\begin{array}{ll}L,&\text{for
}i=k+1,\ldots,2n,\\2L^2,&\text{for }i=2n+1.\end{array} \right.
\end{equation}
If $k=1$, then one can take $L_{n+1}=\min\{L,2L^2\}$.
\end{proposition}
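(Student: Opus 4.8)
The plan is to apply Lemma~\ref{l:IntrLipExplicit} and simply extract, from the single vector inequality
\begin{displaymath}
\| (0,\ldots,0, \phi_{k+1}(v')-\phi_{k+1}(v),\ldots, \phi_{2n}(v')-\phi_{2n}(v), H(v,v'))\| \leq L|v'-v|,\quad v,v'\in E,
\end{displaymath}
the scalar bounds needed for tameness, keeping careful track of the fact that the norm $\|(x,t)\|=\max\{|x|,\sqrt{|t|}\}$ mixes Euclidean scale for the $x$-part with square-root scale for the $t$-slot. First I would note that since the left-hand side dominates $|\phi_i(v')-\phi_i(v)|$ for each $i\in\{k+1,\ldots,2n\}$, we immediately get that $\phi_i$ is Euclidean $L$-Lipschitz, which gives condition \eqref{i:d1} of Definition~\ref{d:tame} with $L_i=L$. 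Next, the left-hand side also dominates $\sqrt{|H(v,v')|}$, so $|H(v,v')|\leq L^2|v'-v|^2$.

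The second step is to compare $H(v,v')$ with the defect quantity appearing in condition \eqref{i:d2} of Definition~\ref{d:tame}, applied to the map $(\phi_{k+1},\ldots,\phi_{2n},-\phi_{2n+1})$. Writing $\widetilde{\phi}_{2n+1}:=-\phi_{2n+1}$, the expression $\widetilde{\phi}_{2n+1}(y)-\widetilde{\phi}_{2n+1}(x)-\langle\psi(y),y-x\rangle-\tfrac12\sum_{i=k+1}^n(\phi_i(y)\phi_{n+i}(x)-\phi_i(x)\phi_{n+i}(y))$ should match $-H(x,y)$ up to sign conventions — essentially $H$ already \emph{is} (a sign-flipped version of) the infinitesimal defect, once one swaps the roles of $v,v'$ and uses the sign change on the last component. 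So by symmetrizing over the two orderings of the pair and invoking Remark~\ref{r:2DimTameLip} (which lets us pass from the one-sided defect bound to the symmetric two-sided one at the cost of a factor $2$), the bound $|H(v,v')|\leq L^2|v'-v|^2$ upgrades to the tameness inequality with constant $L_{2n+1}=2L^2$. This handles both the $k=n$ and $k<n$ cases simultaneously, since the algebraic form of $H$ was tailored in Lemma~\ref{l:IntrLipExplicit} precisely to the two cases of Definition~\ref{d:tame}.

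For the final claim, when $k=1$ one invokes Remark~\ref{r:FromTameToLip}: condition \eqref{i:d2} already forces $\psi=\phi_{n+1}$ to be $L_{2n+1}$-Lipschitz, i.e.\ $2L^2$-Lipschitz, so combining with the $L$-Lipschitz bound from \eqref{i:d1} gives $\min\{L,2L^2\}$ as a valid constant in slot $n+1$. I expect the only delicate point to be bookkeeping: matching the exact sign of $H$ against the exact sign pattern in Definition~\ref{d:tame} after the substitution $\phi_{2n+1}\mapsto-\phi_{2n+1}$, and making sure the symmetrization argument via Remark~\ref{r:2DimTameLip} is applied in the right direction. The nonlinear sum $\tfrac12\sum_{i=k+1}^n(\phi_i(v')\phi_{n+i}(v)-\phi_i(v)\phi_{n+i}(v'))$ is the same in $H$ and in the tameness defect, so it causes no trouble beyond notation; the factor-$2$ loss in $L_{2n+1}$ is exactly the price of Remark~\ref{r:2DimTameLip}, and no further loss occurs.
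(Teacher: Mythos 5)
Your argument is correct and is essentially the paper's own proof: both rest on Lemma \ref{l:IntrLipExplicit}, read off from the max-norm inequality that each $\phi_i$, $i=k+1,\ldots,2n$, is $L$-Lipschitz and that $|H(v,v')|\le L^2|v-v'|^2$, and then identify $H$ (after flipping the sign of the last component and/or swapping the two points) with the defect terms in Definition \ref{d:tame} for $(\phi_{k+1},\ldots,\phi_{2n},-\phi_{2n+1})$, yielding $L_{2n+1}=2L^2$, with the $k=1$ improvement obtained exactly as in the paper via Remark \ref{r:FromTameToLip}. The only cosmetic difference is that you channel the factor $2$ through Remark \ref{r:2DimTameLip}, while the paper bounds the two ordered defect terms $|H(x,y)|$ and $|H(y,x)|$ directly and adds them, which is the same symmetrization.
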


\begin{proof} Once the tameness is established, the improvement for $k=1$ follows from \eqref{eq:LiForm} by Remark \ref{r:FromTameToLip}.
Hence it remains to prove the first part of the Proposition. Let
$\phi$ be an intrinsic $L$-Lipschitz function. According to Lemma
\ref{l:IntrLipExplicit} this means that
\begin{equation}\label{eq:spelled_out}
\| (0,\ldots,0, \phi_{ k+1} (v')-\phi_{ k+1} (v),\dots,
\phi_{2n}(v')-\phi_{2n}(v), H(v,v'))\|\leq L |v'-v|,\quad v,v'\in
E,
\end{equation}
where
\begin{displaymath}
H(v,v')=  \phi_{2n+1}(v')-\phi_{2n+1}(v) + \langle \psi (v),
v'-v\rangle,\quad\text{if }k=n,
\end{displaymath}
and
\begin{displaymath}
H(v,v')=\phi_{2n+1}(v')-\phi_{2n+1}(v) + \langle \psi (v),
v'-v\rangle +\tfrac{1}{ 2}  \sum_{i=k+1}^n \phi_i(v')\phi_{n+i}
(v) - \phi_i(v)\phi_{n+i} (v'),
\end{displaymath}
if $k<n$. Recalling that $\|(x,t)\|=\max\{|x|,\sqrt{|t|}\}$ for
$(x,t)\in \mathbb{R}^{2n}\times \mathbb{R}$, inequality
\eqref{eq:spelled_out} implies first that $\phi_i$ is a Euclidean
$L$-Lipschitz function for $i=k+1,\dots , 2n$, which is part
\eqref{i:d1} of the tameness condition in Definition \ref{d:tame}.
Second, we deduce from \eqref{eq:spelled_out} that
\begin{equation*}
\left| \phi_{2n+1}(v')-\phi_{2n+1}(v) +  \langle \psi (v),
v'-v\rangle   \right|^{1/2}  \leq L |v'-v|,\quad v,v'\in
E,\quad\text{if  }k=n,
\end{equation*}
and
\begin{equation*}
\left| \phi_{2n+1}(v')-\phi_{2n+1}(v) +  \langle \psi (v),
v'-v\rangle  +\tfrac{1}{ 2 } \sum_{i=k+1}^n \phi_i(v')\phi_{n+i}
(v)
 - \phi_i(v)\phi_{n+i} (v') \right|^{1/2}  \leq L |v'-v|,
\end{equation*}
for $v,v'\in E$ if $k<n$. Hence $(\phi_2,\ldots , \phi _{2n},
-\phi _{2n+1})$ is $(L,\ldots L,2L^2)$-tame in both cases.
\end{proof}

We now consider the converse implication.

\begin{proposition}\label{p:FromTameToIntrLip}
 If $(\phi_{k+1},\ldots , \phi _{2n}, -\phi
_{2n+1})\colon E\subset\mathbb{R}^k \to \mathbb{R}^{2n+1-k}$ is an
$(L_{k+1},\ldots,L_{2n+1})$-tame map, then
$\phi=(\phi_{k+1},\ldots , \phi _{2n+1}) :E\subset \mathbb{V} \to
\mathbb{W}$ is intrinsic $L$-Lipschitz with
\begin{displaymath}
L:= \max\left\{|(L_{k+1},\ldots,L_{2n})|,\sqrt{L_{2n+1}}\right\}.
\end{displaymath}
\end{proposition}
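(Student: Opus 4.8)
The plan is to reverse the computations carried out in the proof of Proposition \ref{p:FromIntrLipToTameGENERAL}, using Lemma \ref{l:IntrLipExplicit} as the bridge between the explicit inequality and the intrinsic Lipschitz condition. Concretely, assume that $(\phi_{k+1},\ldots,\phi_{2n},-\phi_{2n+1})$ is $(L_{k+1},\ldots,L_{2n+1})$-tame. By Lemma \ref{l:IntrLipExplicit}, to prove that $\phi=(\phi_{k+1},\ldots,\phi_{2n+1})$ is intrinsic $L$-Lipschitz it suffices to bound
\begin{displaymath}
\|(0,\ldots,0,\phi_{k+1}(v')-\phi_{k+1}(v),\ldots,\phi_{2n}(v')-\phi_{2n}(v),H(v,v'))\|\leq L|v'-v|,\quad v,v'\in E,
\end{displaymath}
where $H(v,v')$ is the explicit expression from that lemma (note that replacing $\phi_{2n+1}$ by $-\phi_{2n+1}$ in the tameness hypothesis is exactly what matches the sign conventions; one should double-check that the $H$ of Lemma \ref{l:IntrLipExplicit} corresponds, up to sign, to the quantity controlled by condition \eqref{i:d2} of Definition \ref{d:tame}). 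Since $\|(x,t)\|=\max\{|x|,\sqrt{|t|}\}$, this maximum splits into two parts.

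First I would handle the horizontal part: by condition \eqref{i:d1} of tameness, $|\phi_i(v')-\phi_i(v)|\leq L_i|v'-v|$ for $i=k+1,\ldots,2n$, so the Euclidean norm of the vector $(\phi_{k+1}(v')-\phi_{k+1}(v),\ldots,\phi_{2n}(v')-\phi_{2n}(v))$ is at most $|(L_{k+1},\ldots,L_{2n})|\,|v'-v|$. Second, the term $\sqrt{|H(v,v')|}$: here I would use condition \eqref{i:d2}. In the case $k=n$, $H(v,v')=\phi_{2n+1}(v')-\phi_{2n+1}(v)+\langle\psi(v),v'-v\rangle$ (up to the sign flip of $\phi_{2n+1}$), which is exactly one of the two quantities appearing in \eqref{i:d2}(a), hence bounded in absolute value by $L_{2n+1}|v'-v|^2$; taking square roots gives $\sqrt{|H(v,v')|}\leq\sqrt{L_{2n+1}}\,|v'-v|$. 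In the case $k<n$, $H(v,v')$ carries the extra bilinear terms $\tfrac12\sum_{i=k+1}^n\phi_i(v')\phi_{n+i}(v)-\phi_i(v)\phi_{n+i}(v')$; comparing carefully with the definition of tameness for $k<n$, these terms are precisely the ones subtracted there (modulo swapping the roles of $v,v'$ and the sign of $\phi_{2n+1}$), so again $|H(v,v')|\leq L_{2n+1}|v'-v|^2$ and $\sqrt{|H(v,v')|}\leq\sqrt{L_{2n+1}}\,|v'-v|$. Combining, the left-hand side is at most $\max\{|(L_{k+1},\ldots,L_{2n})|,\sqrt{L_{2n+1}}\}\,|v'-v|=L|v'-v|$, as desired.

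The main obstacle, and the only place where care is genuinely needed, is bookkeeping: matching the expression $H(v,v')$ from Lemma \ref{l:IntrLipExplicit} against condition \eqref{i:d2} of Definition \ref{d:tame} when one component has been negated. In Definition \ref{d:tame} the controlled quantity involves $\phi_{2n+1}(y)-\phi_{2n+1}(x)-\langle\psi(y),y-x\rangle-\tfrac12\sum\cdots$, whereas in Lemma \ref{l:IntrLipExplicit} one has $\phi_{2n+1}(v')-\phi_{2n+1}(v)+\langle\psi(v),v'-v\rangle+\tfrac12\sum\cdots$. Negating the last component of the tame map, i.e. replacing $\phi_{2n+1}$ by $-\phi_{2n+1}$, together with the symmetry of \eqref{i:d2} under interchanging $x$ and $y$ (which swaps $\psi(x)$ and $\psi(y)$ and, via the identity \eqref{eq:reformulation}, flips the sign of the bilinear sum), is what reconciles the two; I would spell this out explicitly in a line or two so the reader sees that $|H(v,v')|\leq L_{2n+1}|v'-v|^2$ really is one of the two alternatives in \eqref{i:d2}. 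Once that identification is in place, the rest is the elementary splitting of the max-norm described above, plus the trivial observation that it is enough to bound each of the two alternatives in \eqref{i:d2} (indeed one suffices, by Remark \ref{r:2DimTameLip}).
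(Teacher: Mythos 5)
Your proposal is correct and follows essentially the same route as the paper: invoke Lemma \ref{l:IntrLipExplicit}, bound the horizontal components by condition \eqref{i:d1} to get the factor $|(L_{k+1},\ldots,L_{2n})|$, and recognize (after the sign change in the last component) that $|H(v,v')|$ is one of the two quantities controlled by condition \eqref{i:d2}, giving $\sqrt{L_{2n+1}}$. The sign bookkeeping you flag does work out as you expect — negating $\phi_{2n+1}$ and factoring out an overall $-1$ inside the absolute value (with $x=v$, $y=v'$, no swap actually needed) turns the tame expression into exactly $H(v,v')$ — which is precisely what the paper's proof records.
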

\begin{proof}
If $(\phi_{k+1},\ldots , \phi _{2n}, -\phi _{2n+1})$ is
$(L_{k+1},\ldots,L_{2n+1})$-tame, we find by the first condition
in Definition \ref{d:tame} that for $i=k+1,\ldots,2n$, the
function $\phi_i$ is $L_i$-Lipschitz on $E$. Moreover, recalling
that $$\psi (v)=(\phi_{n+1}(v),\dots, \phi_{n+k} (v)),$$ the
second condition in the tameness definition for
$(\phi_{k+1},\ldots , \phi _{2n}, -\phi _{2n+1})$ reads as
follows:
 if $k=n$,
\begin{align*}
 \left| \phi_{2n+1}(v') - \phi_{2n+1}(v) +  \langle \psi (v'),
v'-v\rangle  \right| & + \left| \phi_{2n+1}(v') - \phi_{2n+1}(v) +
\langle \psi (v), v'-v\rangle  \right|\\& \leq L_{2n+1} |v'-v|^2,
\end{align*}
and,  if $k<n$,
\begin{align*}
|\phi_{2n+1}(v') - \phi_{2n+1}(v) +  \langle \psi (v'),
v'-v\rangle
 &+\tfrac{ 1}{ 2} \sum_{i=k+1}^n \phi_i(v')\phi_{n+i} (v) -\phi_i(v)\phi_{n+i} (v')| \notag\\
 +|\phi_{2n+1}(v') - \phi_{2n+1}(v)
+ \langle \psi (v), v'-v\rangle &+\tfrac{ 1 }{2} \sum_{i=k+1}^n
\phi_i(v')\phi_{n+i} (v) -\phi_i(v)\phi_{n+i} (v') |\notag\\& \leq
L_{2n+1} |v'-v|^2,
\end{align*} for all $v,v'\in E$.

 Using Lemma
\ref{l:IntrLipExplicit}, we conclude that $\phi:=(\phi_{k+1},\dots
, \phi _{2n+1}) :E \to \W$ is an intrinsic $L$-Lipschitz function
since its graph map satisfies
\begin{displaymath}
\|\pi_{\W}(\Phi(v')^{-1}\cdot \Phi(v))\|\leq
\max\left\{|(L_{k+1},\ldots,L_{2n})|,\sqrt{{L}_{2n+1}}\right\}
|v-v'|=L |v-v'|,\quad v,v'\in E.
\end{displaymath}
\end{proof}

\subsection{Infinitesimal condition for tame maps on open
sets}\label{ss:Infinitesimal} Tame maps defined on open
quasiconvex sets can be characterized by an infinitesimal
condition. This characterization will be applied in the proofs of
the main results of this paper, the extension and the corona
decomposition for low-dimensional intrinsic Lipschitz graph. We
first discuss the case $k=1$ and $n>1$, which will be used in the
proof of Theorem \ref{Theorem 3.15n>1}.
\begin{proposition}\label{propositionk=1} Assume that $n>1$.
Let $I\subset \mathbb{R}$ be an open interval, and let
$\phi=(\phi_2,\ldots,\phi_{2n+1}):I \to \mathbb{R}^{2n}$.
\begin{enumerate}
\item If $\phi$ is $(L_2,\ldots,L_{2n+1})$-tame, then $\phi_{i}$
is $L_{i}$-Lipschitz for $i=2,\ldots,2n$, and $\phi_{2n+1}$ is
differentiable almost everywhere on $I$, $\dot{\phi}_{2n+1}\in
\mathcal{L}^{\infty}_{loc}(I)$, and
\begin{equation}\label{derivata2n+1ora}
\dot \phi_{2n+1} = \phi_{n+1} +\tfrac{ 1}{2} \sum_{i=2}^n  \dot
\phi_i \phi_{n+i} -\phi_i \dot \phi_{n+i},\quad\text{a.e.\ on }I.
\end{equation}
\item Conversely, if $\phi_i$ is $L_i$-Lipschitz for
$i=2,\ldots,2n$,  $\phi_{2n+1}$ is locally Lipschitz, and
\eqref{derivata2n+1ora} holds, then $\phi$ is
$(L_2',\ldots,L_{2n+1}')$-tame with
\begin{displaymath}
L_i':=L_i\quad\text{for }i\neq 2n+1\quad\text{and}\quad
L_{2n+1}':= 2\left(L_{n+1}+ \sum_{i=2}^n L_{i}L_{n+i}\right).
\end{displaymath}
\end{enumerate}
\end{proposition}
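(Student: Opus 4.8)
The plan is to treat the two implications separately, both times exploiting that for $k=1$ we have $\psi=\phi_{n+1}$, a scalar function, and that $n+1\in\{2,\ldots,2n\}$, so that the $L_{n+1}$-Lipschitz continuity of $\phi_{n+1}$ is part of \eqref{i:d1} of Definition~\ref{d:tame} (it also follows from \eqref{i:d2} by Remark~\ref{r:FromTameToLip}). For part (1) the Lipschitz estimates for $\phi_2,\ldots,\phi_{2n}$ are just \eqref{i:d1}. To handle $\phi_{2n+1}$ I would fix a point $x$ in the full-measure subset of $I$ at which all of $\phi_2,\ldots,\phi_{2n}$ are differentiable, keep only the summand of the two-sided condition \eqref{i:d2} that contains $\psi(x)=\phi_{n+1}(x)$ (which on its own is $\le L_{2n+1}|y-x|^2$), divide by $y-x$, and rewrite the bilinear term via \eqref{eq:reformulation}, so that $\big(\phi_i(y)\phi_{n+i}(x)-\phi_i(x)\phi_{n+i}(y)\big)/(y-x)$ equals $\tfrac{\phi_i(y)-\phi_i(x)}{y-x}\,\phi_{n+i}(x)-\phi_i(x)\,\tfrac{\phi_{n+i}(y)-\phi_{n+i}(x)}{y-x}$, which converges to $\dot\phi_i(x)\phi_{n+i}(x)-\phi_i(x)\dot\phi_{n+i}(x)$ as $y\to x$. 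Since the right-hand side of the divided tameness inequality tends to $0$, this simultaneously shows that the difference quotient of $\phi_{2n+1}$ at $x$ converges — i.e.\ $\phi_{2n+1}$ is differentiable at $x$ — and that its derivative is precisely the expression in \eqref{derivata2n+1ora}. The membership $\dot\phi_{2n+1}\in\calL^\infty_{loc}(I)$ is then read off from \eqref{derivata2n+1ora}, bounding $|\dot\phi_i|\le L_i$, $|\dot\phi_{n+i}|\le L_{n+i}$ and using that the continuous functions $\phi_{n+1},\phi_i,\phi_{n+i}$ are locally bounded.

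For part (2), property \eqref{i:d1} with $L_i'=L_i$ holds by assumption, and by Remark~\ref{r:2DimTameLip} it suffices to verify the one-sided form of \eqref{i:d2} with constant $\tfrac12 L_{2n+1}'=L_{n+1}+\sum_{i=2}^n L_iL_{n+i}$. I would use that $\phi_{2n+1}$ is locally Lipschitz, hence absolutely continuous on each compact subinterval, to write via \eqref{derivata2n+1ora} that $\phi_{2n+1}(y)-\phi_{2n+1}(x)=\int_x^y\big(\phi_{n+1}+\tfrac12\sum_{i=2}^n(\dot\phi_i\phi_{n+i}-\phi_i\dot\phi_{n+i})\big)\,ds$, while the identity behind \eqref{eq:reformulation} together with absolute continuity of $\phi_i,\phi_{n+i}$ gives $\phi_i(y)\phi_{n+i}(x)-\phi_i(x)\phi_{n+i}(y)=\int_x^y\big(\dot\phi_i(s)\phi_{n+i}(x)-\phi_i(x)\dot\phi_{n+i}(s)\big)\,ds$. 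Subtracting, and writing $\phi_{n+1}(y)(y-x)=\int_x^y\phi_{n+1}(y)\,ds$, the quantity to be estimated in the one-sided version of \eqref{i:d2} becomes
\[
\int_x^y\big(\phi_{n+1}(s)-\phi_{n+1}(y)\big)\,ds+\tfrac12\sum_{i=2}^n\int_x^y\Big(\dot\phi_i(s)\big(\phi_{n+i}(s)-\phi_{n+i}(x)\big)-\big(\phi_i(s)-\phi_i(x)\big)\dot\phi_{n+i}(s)\Big)\,ds,
\]
and the Lipschitz bounds $|\phi_{n+1}(s)-\phi_{n+1}(y)|\le L_{n+1}|y-x|$, $|\dot\phi_i|\le L_i$, $|\phi_{n+i}(s)-\phi_{n+i}(x)|\le L_{n+i}|y-x|$ (and the symmetric ones for the second integrand) yield, after integrating over $[x,y]$, the bound $\big(L_{n+1}+\sum_{i=2}^n L_iL_{n+i}\big)|y-x|^2$; doubling via Remark~\ref{r:2DimTameLip} gives tameness with $L_{2n+1}'$.

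The only genuinely non-routine point, and the one to get right, is the bilinear correction term $\sum_{i=2}^n\big(\phi_i(y)\phi_{n+i}(x)-\phi_i(x)\phi_{n+i}(y)\big)$ which is present for $k<n$ but absent for $k=n$; everything hinges on the rewriting \eqref{eq:reformulation}, which re-expresses it through increments of $\phi_i$ and $\phi_{n+i}$ about a single base point and thereby reduces both implications to elementary one-variable calculus — limits of difference quotients in part (1), and the fundamental theorem of calculus together with Lipschitz estimates in part (2). A minor point to state carefully is that in part (1) one does not need to assume differentiability of $\phi_{2n+1}$ in advance: it is produced by the argument on the full-measure set where $\phi_2,\ldots,\phi_{2n}$ are differentiable.
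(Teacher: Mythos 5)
Your proposal is correct and follows essentially the same route as the paper: part (1) by passing to the limit in the (divided) tameness inequality at points where the Lipschitz components $\phi_2,\ldots,\phi_{2n}$ are differentiable, using the rewriting \eqref{eq:reformulation} of the bilinear term, and part (2) by absolute continuity, \eqref{derivata2n+1ora}, and the fundamental theorem of calculus, with the one-sided estimate doubled via Remark \ref{r:2DimTameLip}. The only cosmetic difference is that you take increments about the base point $x$ where the paper takes them about $y$, which changes nothing in the bounds or the resulting constant $L_{2n+1}'$.
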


\begin{proof} We assume first that $\phi$ is  $(L_2,\ldots,L_{2n+1})$-tame, in particular,
$\phi_i$ is a Lipschitz function on $I$ for $i=2,\ldots,2n$.
 Rademacher's theorem implies
that $\phi_i$ is differentiable almost everywhere on $I$ with
bounded derivative. Condition \eqref{i:d2} in Definition
\ref{d:tame} reads
\begin{equation}\label{comediventa}
\begin{aligned}
& \left| \frac{\phi_{2n+1}(y) - \phi_{2n+1}(x) }{y-x} - \phi_{n+1} (y) -\frac 1 2 \sum_{i=2}^n \frac{\phi_i(y)\phi_{n+i} (x) -\phi_i(x)\phi_{n+i} (y) }{y-x} \right| \\
& \,\, + \left| \frac{\phi_{2n+1}(y) - \phi_{2n+1}(x) }{y-x}
 - \phi_{n+1} (x) -\frac 1 2 \sum_{i=2}^n \frac{\phi_i(y)\phi_{n+i} (x) -\phi_i(x)\phi_{n+i} (y) }{y-x} \right| \leq L_{2n+1} |y-x|,
\end{aligned}
\end{equation}
for all $x,y\in I$ with $x\ne y$, and the formula
\eqref{eq:reformulation} for $k=1$ is
\begin{equation}\label{stimacomponenti}
 \phi_i(y)\phi_{n+i} (x) -\phi_i(x)\phi_{n+i} (y) = \phi_i(y)( \phi_{n+i} (x)-\phi_{n+i}(y)) -\phi_{n+i} (y) ( \phi_{i} (x)-\phi_{i}(y)).
\end{equation} Using these two facts, it is easy to see that  $
\dot \phi_{2n+1}$ exists almost everywhere on  $I$ and
\eqref{derivata2n+1ora} holds. In particular, $ \dot
\phi_{2n+1}\in \mathcal{L}^{\infty}_{loc}(I)$.

\medskip

 Conversely, assume that
 $\phi_i$ is an $L_i-$Lipschitz function for $i=2,\dots , 2n$ and  $ \phi_{2n+1}$ is a locally Lipschitz function
  satisfying \eqref{derivata2n+1ora}.
 Then, the corresponding one-sided version of \eqref{comediventa} is satisfied for
 ``$L_{n+1}+ \sum_{i=2}^n L_{i}L_{n+i}$'' instead of ``$L_{2n+1}$''.
 Indeed, for $x, y \in I $ with $x<y$, the expression
 \eqref{stimacomponenti} can be rewritten as
 \begin{equation}\label{stimacomponenti_2}
 \phi_i(y)\phi_{n+i} (x) -\phi_i(x)\phi_{n+i} (y) = - \phi_i(y)\int_x^y \dot{\phi}_{n+i}(s)\,ds+\phi_{n+i}(y)\int_x^y\dot{\phi}_i(s)\,ds,
\end{equation}
 and we obtain that
\begin{align*}
&\left| \phi_{2n+1}(y) - \phi_{2n+1}(x) - \phi_{n+1} (y) (y-x) -\frac 1 2 \sum_{i=2}^n \phi_i(y)\phi_{n+i} (x) -\phi_i(x)\phi_{n+i} (y) \right| \\
&\overset{\eqref{stimacomponenti_2}}{=}\left|\int_x^y
\dot{\phi}_{2n+1}(s)\,ds-\int_x^y
\phi_{n+1}(y)\,ds+\tfrac{1}{2}\sum_{i=2}^n \phi_i(y)\int_x^y
\dot{\phi}_{n+i}(s)\,ds-\phi_{n+i}(y)\int_x^y
\dot{\phi}_i(s)\,ds\right|\\
&\overset{\eqref{derivata2n+1ora} }{=} \left|\int_x^y \phi_{n+1}(s)-
\phi_{n+1}(y)+\tfrac{1}{2}\sum_{i=2}^n \dot{\phi}_i(s)[
\phi_{n+i}(s)-\phi_{n+i}(y)]+\dot{\phi}_{n+i}(s)[\phi_i(y)-\phi_i(s)]\,ds\right|\\
 & \leq \left(L_{n+1}+\sum _{i=2}^n L_{i}L_{n+i} \right)|y-x|^2,
\end{align*}
where in the last inequality we used the fact that $\phi _i$ is
$L_{i}-$Lipschitz for every $i=2,\dots, 2n$.
\end{proof}

If $n>1$, there is a fundamental difference between tame maps
$\phi:E\subset \mathbb{R}^k \to \mathbb{R}^{2n+1-k}$ for $k=n$ and
for $k<n$. This difference is visible already in part \eqref{i:d2}
of Definition \ref{d:tame}, where the expression for $k<n$
contains an additional summand compared to the one for $k=n$. The
simple form of tame maps if $k=n$ can be used to characterize them
by means of a gradient equation for the last component, at least
if $E$ is open and quasiconvex. Recall that a set
$U\subset\mathbb{R}^n$ is \emph{$C$-quasiconvex}
 for a constant $C\geq 1$ (with respect to the Euclidean distance)
if for all $x,y\in U$, there is a curve $\gamma$ connecting $x$ to
$y$ inside $U$ of Euclidean length $\mathrm{length}(\gamma)\leq C
|x-y|$.%
%

\begin{proposition}\label{l:EquivTameDiffOpenInt} Let $n\in \mathbb{N}$ and assume that $U$
is an open subset of $\mathbb{R}^n$. For a function
$\phi=(\phi_{n+1},\ldots,\phi_{2n+1}):U \to \mathbb{R}^{n+1}$ the
following holds:
\begin{enumerate}
\item If $\phi$ is $(L_{n+1},\ldots,L_{2n+1})$-tame, then
$\phi_{i}$, $i=n+1,\ldots, 2n$, is Euclidean $L_{i}$-Lipschitz and
$\phi_{2n+1}$ is differentiable on $U$ with Lipschitz continuous
gradient
\begin{equation}\label{derivata2n+1}
\nabla \phi_{2n+1} = (\phi_{n+1},\ldots,\phi_{2n}) \quad\text{on
}U.
\end{equation}
In particular, $\phi_{2n+1}\in C^{1,1}(U)$. \item If $U$ is
additionally assumed to be $C$-quasiconvex, if
$(\phi_{n+1},\ldots,\phi_{2n})$ is $L$-Lipschitz with respect to
the Euclidean metric, and $\phi_{2n+1}$ satisfies
\eqref{derivata2n+1}, then $\phi$ is
$(L_{n+1}',\ldots,L_{2n+1}')$-tame with
\begin{displaymath}
L_i':=L\quad\text{for }i\in \{n+1,\ldots,2n\}\quad\text{and}\quad
L_{2n+1}':= 2 C^2\, L.
\end{displaymath}
\end{enumerate}
\end{proposition}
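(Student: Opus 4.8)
The plan is to prove the two parts separately, both being essentially routine once the right observation is made. For part (1), assume $\phi=(\phi_{n+1},\ldots,\phi_{2n+1})$ is $(L_{n+1},\ldots,L_{2n+1})$-tame on the open set $U$. Condition \eqref{i:d1} immediately gives that each $\phi_i$, $i=n+1,\ldots,2n$, is Euclidean $L_i$-Lipschitz. For the last component, I would rewrite the (one-sided version of the) tameness inequality from Remark \ref{r:2DimTameLip} in the case $k=n$, namely
\begin{displaymath}
\left|\phi_{2n+1}(y)-\phi_{2n+1}(x)-\langle \psi(y),y-x\rangle\right|\leq L_{2n+1}|x-y|^2,\quad x,y\in U,
\end{displaymath}
where $\psi=(\phi_{n+1},\ldots,\phi_{2n})$. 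Fixing $x\in U$ and letting $y\to x$ along directions, this says precisely that $\phi_{2n+1}$ is differentiable at $x$ with $\nabla\phi_{2n+1}(x)=\psi(x)$: the map $y\mapsto \phi_{2n+1}(x)+\langle\psi(x),y-x\rangle$ is a first-order Taylor expansion with quadratic remainder (using the full two-sided condition of Definition \ref{d:tame}, or the symmetrized version, to handle both $\psi(x)$ and $\psi(y)$). Hence \eqref{derivata2n+1} holds on $U$, and since $\psi=(\phi_{n+1},\ldots,\phi_{2n})$ is Lipschitz (being componentwise Lipschitz), the gradient $\nabla\phi_{2n+1}$ is Lipschitz continuous, so $\phi_{2n+1}\in C^{1,1}(U)$.

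For part (2), assume in addition that $U$ is $C$-quasiconvex, that $(\phi_{n+1},\ldots,\phi_{2n})=:\psi$ is $L$-Lipschitz, and that $\nabla\phi_{2n+1}=\psi$ on $U$. By Remark \ref{r:2DimTameLip} it suffices to verify the one-sided tameness inequality with constant $L_{2n+1}'=2C^2L$; condition \eqref{i:d1} holds with $L_i'=L$ since each component of $\psi$ is $L$-Lipschitz. To estimate $\phi_{2n+1}(y)-\phi_{2n+1}(x)-\langle\psi(y),y-x\rangle$, I would use quasiconvexity: pick a curve $\gamma\colon[0,1]\to U$ from $x$ to $y$ with $\mathrm{length}(\gamma)\leq C|x-y|$, and integrate $\nabla\phi_{2n+1}=\psi$ along $\gamma$, writing
\begin{displaymath}
\phi_{2n+1}(y)-\phi_{2n+1}(x)=\int_0^1 \langle \psi(\gamma(s)),\dot\gamma(s)\rangle\,ds.
\end{displaymath}
Subtracting $\langle\psi(y),y-x\rangle=\int_0^1\langle\psi(y),\dot\gamma(s)\rangle\,ds$ and using $|\psi(\gamma(s))-\psi(y)|\leq L|\gamma(s)-y|\leq L\,\mathrm{length}(\gamma)\leq CL|x-y|$ gives a bound of the form $CL|x-y|\cdot\mathrm{length}(\gamma)\leq C^2L|x-y|^2$, which is even better than the claimed $2C^2L$; the factor $2$ is the slack coming from Remark \ref{r:2DimTameLip}. (One should smooth or approximate $\gamma$ if merely rectifiable, or simply work with the arclength parametrization and a Lipschitz curve, so that $\phi_{2n+1}\circ\gamma$ is absolutely continuous and the fundamental theorem of calculus applies; this is routine.)

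The only mild technical point — and the step I would be most careful about — is the differentiability claim in part (1): one must extract genuine (Fréchet) differentiability, not just directional derivatives, from the quadratic estimate. This is immediate once one notes that the estimate $|\phi_{2n+1}(y)-\phi_{2n+1}(x)-\langle\psi(x),y-x\rangle|\leq L_{2n+1}|x-y|^2$ (obtained from Definition \ref{d:tame} via the triangle inequality, absorbing $|\langle\psi(y)-\psi(x),y-x\rangle|\leq L|x-y|^2$) is exactly the definition of differentiability at $x$ with differential $\psi(x)$, uniformly in the direction. Everything else is bookkeeping with the norm $\|(z,\tau)\|=\max\{|z|,\sqrt{|\tau|}\}$ and the fundamental theorem of calculus.
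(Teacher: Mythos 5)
Your proposal is correct and follows essentially the same route as the paper: part (1) reads the quadratic tameness estimate as the definition of Fréchet differentiability of $\phi_{2n+1}$ with differential $\psi$, and part (2) integrates $\nabla\phi_{2n+1}=\psi$ along a quasiconvex curve and uses the Lipschitz bound on $\psi$ to get the $C^2L|x-y|^2$ estimate, with the factor $2$ from the two-sided condition. (Minor remark: no absorbing via the triangle inequality is needed in part (1), since the two-sided condition in Definition \ref{d:tame} already bounds $|\phi_{2n+1}(y)-\phi_{2n+1}(x)-\langle\psi(x),y-x\rangle|$ by $L_{2n+1}|x-y|^2$ directly.)
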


\begin{remark}\label{r:InfinitesimalK<N}
If $k<n$, then one can still carry out the argument in the first
part of the proof of Proposition \ref{l:EquivTameDiffOpenInt} for
tame $\phi:U\subset \mathbb{R}^k \to \mathbb{R}^{2n+1-k}$, but the
conclusion is that $\phi_{2n+1}$ satisfies
\begin{displaymath}
\nabla \phi_{2n+1} =
\begin{pmatrix}\phi_{n+1}+\frac{1}{2}\sum_{i=k+1}^n \phi_{n+i} \,\partial_{x_1} \phi_i-\phi_i \,\partial_{x_1} \phi_{n+i}\\\vdots\\
\phi_{n+k}+\frac{1}{2}\sum_{i=k+1}^n \phi_{n+i} \, \partial_{x_k}
\phi_i -\phi_i\, \partial_{x_k}
\phi_{n+i}\end{pmatrix}\quad\text{almost everywhere on }U,
\end{displaymath}
cf.\ Proposition \ref{propositionk=1} for $k=1$. Since
$\phi_{i},\phi_{n+i}$, $i\in \{k+1,\ldots,n\}$, are merely
Lipschitz functions, they are only almost everywhere
differentiable and the derivatives are just bounded measurable
functions, so one cannot conclude that $\phi_{2n+1}$ is
$C^{1,1}(U)$.
\end{remark}

\begin{remark}\label{r:self-improvement}
Proposition \ref{l:EquivTameDiffOpenInt} yields a self-improvement
phenomenon for the tameness constant $L_{2n+1}$ of a
$(L_{n+1},\ldots,L_{2n+1})$-tame map
$(\phi_{n+1},\ldots,\phi_{2n+1}):U \to \mathbb{R}^{n+1}$ defined
on an open and $C$-quasiconvex set $U\subset \mathbb{R}^n$. By
assumption, such $(\phi_{n+1},\ldots,\phi_{2n})$ is Euclidean
$|(L_{n+1},\ldots,L_{2n})|$-Lipschitz on $U$, and by Proposition
\ref{l:EquivTameDiffOpenInt} (1), the last component $\phi_{2n+1}$
is differentiable on $U$ with $\nabla
\phi_{2n+1}=(\phi_{n+1},\ldots,\phi_{2n})$. It then follows from
part (2) of the same proposition that
$(\phi_{n+1},\ldots,\phi_{2n+1})$ is in fact tame with constants
\begin{displaymath}
L_i =
\left\{\begin{array}{ll}|(L_{n+1},\ldots,L_{2n})|,&i=n+1,\ldots,2n,\\2C^2
|(L_{n+1},\ldots,L_{2n})|,&i=2n+1.\end{array}\right.
\end{displaymath}
Hence the initially given tameness constant ``$L_{2n+1}$'' can be
replaced by $$\min\{L_{2n+1},2C^2 |(L_{n+1},\ldots,L_{2n})|\}.$$
In particular, if $U=\mathbb{R}^n$, then this holds with $C=1$.
\end{remark}

\begin{remark}
 The correspondence between intrinsic Lipschitz and tame maps relates Proposition \ref{l:EquivTameDiffOpenInt} to
earlier results by Magnani \cite{MR2659687} and the second author
\cite{FasslerMSc}, keeping in mind the connection to metric
Lipschitz functions explained in Remark \ref{r:iLGvsLip}. More
precisely, \cite[Theorem 1.1]{MR2659687} and \cite[Theorem
4.5]{MR2659687} provide a characterization of (locally) Lipschitz
functions $\Phi$ from (geodetically convex) subsets of Riemannian
manifolds into graded groups through a system of first order PDEs
known as \emph{weak contact equations}. This characterization
applies in particular in our setting, where the source space is
Euclidean space $\mathbb{R}^k$ and the target space is the
Heisenberg group $\mathbb{H}^n$. The purpose of Proposition
\ref{l:EquivTameDiffOpenInt} is to show that if $\Phi:\mathbb{R}^k
\to \mathbb{H}^n$ arises as graph map of an intrinsic Lipschitz
function $\phi$, and if $k=n$, then this characterization takes a
particularly simple form and leads to a gradient equation for the
last component of $\phi$ that holds in the classical sense
pointwise everywhere.  This generalizes an observation made  in
\cite{fssler2019singular}: the condition for a curve $\gamma$ in
$\mathbb{H}^1$ to be horizontal (or Lipschitz with respect to $d$)
simplifies if $s\mapsto \gamma(s)= (s,0,0)\cdot \phi(s,0,0)$ has
intrinsic graph form. Indeed, whereas the last component of a
Lipschitz curve $\gamma$ in $\mathbb{H}^1$ need not even be
everywhere differentiable, the last component of $\phi$ is
$C^{1,1}$ if $\gamma$ is Lipschitz.
\end{remark}

\begin{proof}[Proof of Proposition \ref{l:EquivTameDiffOpenInt}]
 We assume first that $\phi$ is  $(L_{n+1},\ldots,L_{2n+1})$-tame, in particular,
$\phi_i$ is a Lipschitz function on $U$ for $i=n+1,\ldots,2n$.
 Condition \eqref{i:d2} (a) in
Definition \ref{d:tame} and the fact that $U$ is open then imply
that $ \nabla\phi_{2n+1}$ exists on $U$ and \eqref{derivata2n+1}
holds.

For the converse implication, we assume in addition that $U$ is
$C$-quasiconvex. We claim that if
 $(\phi_{n+1},\ldots,\phi_{2n})$ is an $L$-Lipschitz function, and  \eqref{derivata2n+1}
 holds, then the tameness condition \eqref{i:d2} (a) in Definition \ref{d:tame} is satisfied
 with constant
 \begin{displaymath}
L_{2n+1}':= 2 C^2 L
 \end{displaymath}
 (instead of $L_{2n+1}$). According to Remark \ref{r:2DimTameLip}, it suffices
to verify the one-sided version of it (without the constant
``$2$''). To prove the latter, let $x$ and $y$ be  arbitrary
distinct points in $U$, and apply the $C$-quasiconvexity of $U$ to
find a curve $\gamma:[0,1]\to U$  with $\gamma(0)=x$,
$\gamma(1)=y$ and $\mathrm{length}(\gamma)\leq C |x-y|$. Since
$\gamma$ is a curve of finite length, we may without loss of
generality assume that the parametrization is Lipschitz
continuous. The fundamental theorem of calculus then yields for
$\psi:=(\phi_{n+1},\ldots,\phi_{2n})$ that
\begin{align*}
\phi_{2n+1}(y)-\phi_{2n+1}(x)-\langle \psi(y),y-x\rangle&=
\phi_{2n+1}(\gamma(1))-\phi_{2n+1}(\gamma(0))-\langle
\psi(\gamma(1)),\gamma(1)-\gamma(0)\rangle\\
&= \int_0^{1} (\phi_{2n+1}\circ \gamma)'(s)-\langle
\psi(\gamma(1)),\dot{\gamma}(s)\rangle\,ds\\
&= \int_0^1 \langle \nabla
\phi_{2n+1}(\gamma(s)),\dot{\gamma}(s)\rangle-\langle
\psi(\gamma(1)),\dot{\gamma}(s)\rangle \,ds\\
&\overset{\eqref{derivata2n+1}}{=} \int_0^1 \langle
\psi(\gamma(s))-\psi(\gamma(1)),\dot{\gamma}(s)\rangle\,ds.
\end{align*}
Taking absolute values on both sides, we conclude that
\begin{align*}
|\phi_{2n+1}(y)-\phi_{2n+1}(x)-\langle \psi(y),y-x\rangle|&\leq
\int_0^1
\mathrm{Lip}(\psi)|\gamma(s)-\gamma(1)|\,|\dot{\gamma}(s)|\,ds\\
&\leq \mathrm{Lip}(\psi) \,\mathrm{length}(\gamma)^2 \leq C^2 L
|x-y|^2.
\end{align*}
\end{proof}

\section{Extension results}\label{s:extension}

The core of this section are extension results for tame maps
$\phi:E\subset\mathbb{R}^k \to \mathbb{R}^{2n+1-k}$, first for
$k=n$ and then, as a corollary, for $k<n$. In the final
subsection, we use Propositions \ref{p:FromIntrLipToTameGENERAL}
and \ref{p:FromTameToIntrLip} to translate these results into an
extension theorem for intrinsic Lipschitz functions (Theorem
\ref{t:MainIntro} in the introduction).

\subsection{Extension of tame maps in the case
$k=n$}\label{ss:extkn}

A classical method for extending Lipschitz functions $f:E \to
\mathbb{R}^m$ from a closed set $E\subset\mathbb{R}^n$ to the
entire space $\mathbb{R}^n$ is based on a Lipschitz partition of
unity associated to a Whitney decomposition of the complement
$\mathbb{R}^n \setminus E$, see for instance
\cite[2.10]{MR2177410}. Variants of this approach are also at the
core of the Lipschitz extension theorems in
\cite{MR2200122,MR2729637}. To establish the main result of this
section, we apply a version of Whitney's extension theorem, so
that the proof is again, albeit indirectly, based on a Whitney
decomposition of $\mathbb{R}^n \setminus E$. The key observation
is that in our setting it suffices to apply Whitney's construction
\emph{to the last component} of the tame map. More precisely, we
will use a $C^{1,1}$ version of Whitney's extension theorem due to
Glaeser \cite{MR101294}, see for instance \cite[Definition 2.2,
(2.48), and Theorem 2.19]{MR2882877}, \cite[Lemma
10.70]{MR2868143}, and the references cited in \cite{MR3777638}.
Here $C^{1,1}(\mathbb{R}^n)$ is the space of $C^1(\mathbb{R}^n)$
functions with Lipschitz continuous gradients.

\begin{thm}[Glaeser's $C^{1,1}$ Whitney extension
theorem]\label{t:glaeser} Let $n\in \mathbb{N}$ and assume that
$E$ is a subset of $\mathbb{R}^n$. The following conditions for
functions $f:E\to \mathbb{R}$ and $\psi:E\to \mathbb{R}^n$ are
equivalent:
\begin{enumerate}
\item[(a)] there exists $\overline{f}\in C^{1,1}(\mathbb{R}^n)$
with $\overline{f}|_E = f$ and $(\nabla \overline{f})|_E=\psi$,
\item[(b)] for a constant $\lambda>0$ and all $x,y\in E$, the
following holds:
\begin{enumerate}
\item[(1)]  $|\psi(x)-\psi(y)|\leq \lambda |x-y|$, \item[(2)]
$|f(x)-f(y)-\langle \psi(x),x-y\rangle|\leq \lambda |x-y|^2$.
\end{enumerate}
\end{enumerate}
Moreover, if (b) holds, then $\overline{f}$ can be constructed so
that the Lipschitz constant of $\nabla \overline{f}$ satisfies
\begin{displaymath}
\inf \lambda\leq \mathrm{Lip}(\nabla \overline{f})\leq C(n) \inf
\lambda,
\end{displaymath}
where the $\inf$ ranges over all $\lambda$ satisfying (b), and
$C(n)$ is a constant depending only on $n$.
\end{thm}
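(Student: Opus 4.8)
The statement is classical (it is the $C^{1,1}$ case of Whitney's extension theorem, due to Glaeser); the plan is to recall the argument while tracking the dependence of constants on $n$. First I would reduce to the case that $E$ is closed: conditions (b)(1)--(b)(2) persist under passage to the closure, since for $x_j\to x$ with $x_j\in E$, (b)(1) forces $(\psi(x_j))$ to be Cauchy and then (b)(2) forces $(f(x_j))$ to be Cauchy, so $f$ and $\psi$ extend continuously to $\overline E$ with the same $\lambda$; moreover a function on $\mathbb{R}^n$ is $C^{1,1}$ with the prescribed restriction and gradient on $E$ if and only if the same holds on $\overline E$. The implication (a)$\Rightarrow$(b) is then soft: if $\overline f\in C^{1,1}(\mathbb{R}^n)$ and $M:=\mathrm{Lip}(\nabla\overline f)$, then $\psi=(\nabla\overline f)|_E$ gives (b)(1) with $\lambda=M$, while the identity $f(x)-f(y)-\langle\psi(x),x-y\rangle=\int_0^1\langle\nabla\overline f(y+t(x-y))-\nabla\overline f(x),\,x-y\rangle\,dt$ together with $|\nabla\overline f(y+t(x-y))-\nabla\overline f(x)|\le M(1-t)|x-y|$ gives (b)(2) with the same $\lambda=M$. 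This already yields the lower bound $\inf\lambda\le\mathrm{Lip}(\nabla\overline f)$ of the quantitative claim.

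For the substantive implication (b)$\Rightarrow$(a) I would perform the Whitney--Glaeser construction. Fix a Whitney decomposition $\{Q_j\}$ of $\Omega:=\mathbb{R}^n\setminus E$ into dyadic cubes with $\diam Q_j\le\dist(Q_j,E)\le 4\diam Q_j$ and with bounded overlap of the slightly dilated cubes $Q_j^{\ast}$, together with a subordinate partition of unity $\{\varphi_j\}$ satisfying $\sum_j\varphi_j\equiv 1$ on $\Omega$, $\spt\varphi_j\subset Q_j^{\ast}$, $|\nabla\varphi_j|\le C(n)/\diam Q_j$ and $|\nabla^2\varphi_j|\le C(n)/(\diam Q_j)^2$. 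For each $j$ pick $p_j\in E$ realizing $\dist(Q_j,E)$, let $T_j(z):=f(p_j)+\langle\psi(p_j),z-p_j\rangle$ be the first-order Taylor polynomial of the data at $p_j$, and set $\overline f:=f$ on $E$ and $\overline f:=\sum_j\varphi_j T_j$ on $\Omega$.

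The heart of the matter is a pair of pointwise estimates that follow from (b). Whenever $x$ lies in both $Q_j^{\ast}$ and $Q_{j_0}^{\ast}$ (so $\diam Q_j$ and $\diam Q_{j_0}$ are comparable and $|p_j-p_{j_0}|\le C(n)\diam Q_{j_0}$), condition (b)(1) gives $|\nabla T_j-\nabla T_{j_0}|=|\psi(p_j)-\psi(p_{j_0})|\le\lambda|p_j-p_{j_0}|$, and expanding $f(p_j)-f(p_{j_0})$ by (b)(2) yields $T_j(x)-T_{j_0}(x)=\langle\psi(p_j)-\psi(p_{j_0}),x-p_{j_0}\rangle+O(\lambda|p_j-p_{j_0}|^2)$, hence $|T_j(x)-T_{j_0}(x)|\le C(n)\lambda(\diam Q_{j_0})^2$. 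Differentiating $\overline f$ on $\Omega$ and using $\sum_j\nabla\varphi_j\equiv 0$ and $\sum_j\nabla^2\varphi_j\equiv 0$ to subtract $T_{j_0}$ and its gradient before estimating, the bounded overlap and the derivative bounds on the $\varphi_j$ convert these into $|\nabla\overline f(x)-\psi(p_x)|\le C(n)\lambda\dist(x,E)$ and $|\nabla^2\overline f(x)|\le C(n)\lambda$ for $x\in\Omega$, where $p_x\in E$ is a nearest point to $x$; the analogous computation without differentiating gives $|\overline f(x)-f(q)-\langle\psi(q),x-q\rangle|\le C(n)\lambda|x-q|^2$ for all $x\in\mathbb{R}^n$ and $q\in E$.

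These estimates give the conclusion. Combined with (b)(2) in the case $x\in E$, the last one shows $\overline f$ is differentiable everywhere with $\nabla\overline f|_E=\psi$, and the gradient estimate on $\Omega$ (with $\psi$ continuous by (b)(1)) shows $\nabla\overline f$ is continuous on $\mathbb{R}^n$. Finally, to obtain $\mathrm{Lip}(\nabla\overline f)\le C(n)\lambda$ one splits into cases according to the size of $|x-y|$ relative to $\dist(x,E)$ and $\dist(y,E)$: if $|x-y|$ is at least a fixed fraction of the larger of these distances, compare $\nabla\overline f(x)$ and $\nabla\overline f(y)$ to values of $\psi$ at nearby points of $E$ via the gradient estimate on $\Omega$ and (b)(1); if instead, say, $|x-y|$ is much smaller than $\dist(x,E)$, then the segment $[x,y]\subset\Omega$ stays at distance comparable to $\dist(x,E)$ from $E$, and integrating $|\nabla^2\overline f|\le C(n)\lambda$ along it gives the bound. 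Since all constants depend only on $n$, this proves $\mathrm{Lip}(\nabla\overline f)\le C(n)\lambda$; applying it with $\lambda$ equal to $\inf\lambda$ (the conditions in (b) being a family of non-strict inequalities, they hold for the infimum) gives $\mathrm{Lip}(\nabla\overline f)\le C(n)\inf\lambda$, as claimed. The main obstacle is precisely this last case analysis for the global Lipschitz bound on $\nabla\overline f$, which requires using the geometry of the Whitney cubes near $\partial E$; everything else is bookkeeping with (b)(1)--(b)(2).
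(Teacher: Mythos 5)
The paper does not prove Theorem \ref{t:glaeser} at all: it is imported verbatim from the literature, with the citation to Glaeser and to standard textbook treatments, so there is no internal proof to compare yours against. Your sketch is the classical Whitney--Glaeser construction and is essentially correct: the reduction to closed $E$, the soft direction (a)$\Rightarrow$(b) via the integral form of Taylor's theorem (which also yields the lower bound $\inf\lambda\leq \mathrm{Lip}(\nabla\overline{f})$), the definition $\overline{f}=\sum_j\varphi_j T_j$ on $\mathbb{R}^n\setminus E$ with first-order jets $T_j$ at near-points $p_j$, the two jet-comparison estimates $|\nabla T_j-\nabla T_{j_0}|\leq\lambda|p_j-p_{j_0}|$ and $|T_j(x)-T_{j_0}(x)|\leq C(n)\lambda(\operatorname{diam}Q_{j_0})^2$ coming from (b)(1)--(b)(2), and the conversion of these into $|\nabla\overline{f}(x)-\psi(p_x)|\leq C(n)\lambda\operatorname{dist}(x,E)$, $|\nabla^2\overline{f}(x)|\leq C(n)\lambda$, and the global quadratic closeness estimate, are exactly the steps of the standard proof, and your final case analysis (comparing $|x-y|$ with $\operatorname{dist}(x,E)$, $\operatorname{dist}(y,E)$) is the usual way to obtain $\mathrm{Lip}(\nabla\overline{f})\leq C(n)\lambda$ with a dimensional constant. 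The only places that deserve a word more care in a full write-up are routine: $\sum_j\nabla\varphi_j\equiv 0$ and $\sum_j\nabla^2\varphi_j\equiv 0$ on the complement (differentiate $\sum_j\varphi_j\equiv 1$), the verification that $\nabla\overline{f}$ exists and equals $\psi$ at points of $E$ approached both from $E$ and from the complement (your quadratic estimate handles both), and the inclusion of the cases $x\in E$ or $y\in E$ in the final Lipschitz bound; none of these affects the correctness of the argument or the claimed dependence of the constant only on $n$.
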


\begin{remark}\label{r:ExtClos} Theorem \ref{t:glaeser} is stated
for arbitrary subsets $E$ of $\mathbb{R}^n$, and the same holds
true for our application in Theorem \ref{propK=N} and the
corollaries thereof. As observed in \cite[\S 1]{MR3777638}, if $f$
and $\psi$ satisfy condition (b) in Theorem \ref{t:glaeser} for a
set $E\subset \mathbb{R}^n$, then one can always extend them to
the closure $\overline{E}$ of $E$ so that inequalities (1) and (2)
in (b) are satisfied on $\overline{E}$ with the same constant
$\lambda$. (Extending $f$ and $\psi$ as continuous maps to the
closure is straightforward since $f$ is locally Lipschitz and
$\psi$ is Lipschitz; then it just remains to verify that the
inequalities (1) and (2) continue to hold on $\overline{E}$.)
Conversely, if $f$ and $\psi$ defined on $E$ satisfy condition
(a), then they can obviously be extended to $\overline{E}$ so that
(a) continues to hold. Thus the proof of Theorem \ref{t:glaeser}
is reduced to the case of closed sets.
\end{remark}

\begin{thm}\label{propK=N}
Let $n\in \mathbb{N}$. An $(L_{n+1},\ldots,L_{2n+1})$-tame map
$\phi:E \subset \mathbb{R}^n \to \mathbb{R}^{n+1}$ can be extended
to an $(L_{n+1}',\ldots,L_{2n+1}')$-tame map
$\overline{\phi}:\mathbb{R}^n \to \mathbb{R}^{n+1}$ such that
$\overline{\phi}|_E = \phi$ and
\begin{displaymath}
L_i':= C(n)\max\left\{|(L_{n+1},\ldots,L_{2n})|,L_{2n+1}
\right\},\quad\text{for }i\in\{n+1,\ldots,2n\},
\end{displaymath}
\begin{displaymath}
 L_{2n+1}':= 2 C(n)\max\{|(L_{n+1},\ldots,L_{2n})|,L_{2n+1}\}.
\end{displaymath}
\end{thm}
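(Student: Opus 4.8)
The plan is to recognize the case $k=n$ of the tameness condition as a reformulation of the hypothesis in Glaeser's $C^{1,1}$ Whitney extension theorem (Theorem~\ref{t:glaeser}), apply that theorem to the last component $\phi_{2n+1}$, and then read off tameness of the resulting extension by means of Proposition~\ref{l:EquivTameDiffOpenInt}.

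Concretely, I would first set $f := \phi_{2n+1}$ and $\psi := (\phi_{n+1},\ldots,\phi_{2n})$. Part~\eqref{i:d1} of Definition~\ref{d:tame} says each $\phi_i$, $i=n+1,\ldots,2n$, is Euclidean $L_i$-Lipschitz, hence $\psi$ is Euclidean $|(L_{n+1},\ldots,L_{2n})|$-Lipschitz, which is precisely condition (b)(1) of Theorem~\ref{t:glaeser}. Part~\eqref{i:d2}(a) of Definition~\ref{d:tame}, in its two-sided form, in particular gives $|\phi_{2n+1}(y)-\phi_{2n+1}(x)-\langle\psi(x),y-x\rangle|\le L_{2n+1}|x-y|^2$ for all $x,y\in E$; since $\phi_{2n+1}(y)-\phi_{2n+1}(x)-\langle\psi(x),y-x\rangle = -(f(x)-f(y)-\langle\psi(x),x-y\rangle)$, this is exactly condition (b)(2) with constant $L_{2n+1}$. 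Thus hypothesis (b) of Theorem~\ref{t:glaeser} holds for the pair $(f,\psi)$ with $\lambda_0:=\max\{|(L_{n+1},\ldots,L_{2n})|,\,L_{2n+1}\}$.

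Next, I would invoke Theorem~\ref{t:glaeser} to obtain $\overline{f}\in C^{1,1}(\mathbb{R}^n)$ with $\overline{f}|_E=f$, $(\nabla\overline{f})|_E=\psi$, and $\mathrm{Lip}(\nabla\overline{f})\le C(n)\lambda_0$. I define the extension $\overline{\phi}=(\overline{\phi}_{n+1},\ldots,\overline{\phi}_{2n+1}):\mathbb{R}^n\to\mathbb{R}^{n+1}$ by $\overline{\phi}_{2n+1}:=\overline{f}$ and $(\overline{\phi}_{n+1},\ldots,\overline{\phi}_{2n}):=\nabla\overline{f}$. Since $\overline{f}|_E=f=\phi_{2n+1}$ and $(\nabla\overline{f})|_E=\psi=(\phi_{n+1},\ldots,\phi_{2n})$, we get $\overline{\phi}|_E=\phi$, so $\overline{\phi}$ genuinely extends $\phi$. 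To check tameness, put $\overline{\psi}:=(\overline{\phi}_{n+1},\ldots,\overline{\phi}_{2n})=\nabla\overline{f}$, which is Euclidean $\mathrm{Lip}(\nabla\overline{f})$-Lipschitz, and note that $\overline{\phi}_{2n+1}=\overline{f}$ satisfies $\nabla\overline{\phi}_{2n+1}=\overline{\psi}$, i.e.\ \eqref{derivata2n+1} holds on $U=\mathbb{R}^n$. As $\mathbb{R}^n$ is $1$-quasiconvex, part~(2) of Proposition~\ref{l:EquivTameDiffOpenInt} applies with $C=1$ and $L=\mathrm{Lip}(\nabla\overline{f})$, giving that $\overline{\phi}$ is $(L_{n+1}',\ldots,L_{2n+1}')$-tame with $L_i'=\mathrm{Lip}(\nabla\overline{f})$ for $i=n+1,\ldots,2n$ and $L_{2n+1}'=2\,\mathrm{Lip}(\nabla\overline{f})$; combining with $\mathrm{Lip}(\nabla\overline{f})\le C(n)\lambda_0=C(n)\max\{|(L_{n+1},\ldots,L_{2n})|,L_{2n+1}\}$ yields the asserted constants.

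There is no deep obstacle here: the real content is the two structural facts already available, namely that for $k=n$ the tameness of $\phi$ is equivalent to the Glaeser hypothesis for $(\phi_{2n+1},(\phi_{n+1},\ldots,\phi_{2n}))$, and that a $C^{1,1}$ function whose gradient coincides with its $\psi$-data is automatically tame on a quasiconvex open set. The only point demanding a bit of care is the bookkeeping of constants: one should feed the single constant $\lambda_0=\max\{|(L_{n+1},\ldots,L_{2n})|,L_{2n+1}\}$ into Theorem~\ref{t:glaeser} and then re-derive all the tameness constants of $\overline{\phi}$ from $\mathrm{Lip}(\nabla\overline{f})$ via Proposition~\ref{l:EquivTameDiffOpenInt}, rather than estimating the tameness of $\overline{\phi}$ directly, so that the (possibly large) original constant $L_{n+1}$ does not propagate into $L_{n+1}',\ldots,L_{2n+1}'$.
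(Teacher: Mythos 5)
Your proposal is correct and follows essentially the same route as the paper's proof: translate the $k=n$ tameness condition into the hypotheses of Glaeser's $C^{1,1}$ Whitney extension theorem with $\lambda=\max\{|(L_{n+1},\ldots,L_{2n})|,L_{2n+1}\}$, apply it to $\phi_{2n+1}$, define the remaining components as $\nabla\overline{\phi}_{2n+1}$, and conclude tameness with the stated constants via Proposition~\ref{l:EquivTameDiffOpenInt}~(2) on the $1$-quasiconvex set $\mathbb{R}^n$. No substantive difference from the paper's argument.
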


\begin{proof}[Proof of Theorem \ref{propK=N}]
Let $\phi$ be a tame map on $E\subset \mathbb{R}^n$ as in the
statement of the theorem. In order to extend $\phi$ to a tame map
defined on all of $\mathbb{R}^n$, we apply the $C^{1,1}$ Whitney
extension theorem
 the function $\phi_{2n+1}:E\to
\mathbb{R}$. The tameness conditions  in Definition \ref{d:tame}
ensure that the assumptions of Theorem \ref{t:glaeser} are
satisfied with
\begin{displaymath}
\lambda:= \max\left\{|(L_{n+1},\ldots,L_{2n})|,L_{2n+1}\right\}.
\end{displaymath}
Thus we find a $C^{1,1}$ function $\overline{\phi}_{2n+1}:\R^n \to
\R$ with ${\overline{\phi}_{2n+1}}|_E=\phi_{2n+1}$ whose gradient
is $C(n)\lambda$-Lipschitz and agrees with
$\psi=(\phi_{n+1},\ldots,\phi_{2n})$ on $E$. Then we simply define
\begin{equation}\label{derivata2n+1k2n2}
(\overline{\phi}_{n+1},\ldots,\overline{\phi}_{2n}):=\nabla
\overline{\phi}_{2n+1},
\end{equation} and observe that this extends $(\phi_{n+1},\ldots,\phi_{2n})$ from $E$ to
$\R^n$. Thus,
\begin{displaymath}
\overline{\phi}:=(\nabla
\overline{\phi}_{2n+1},\overline{\phi}_{2n+1})=(\overline{\phi}_{n+1},\ldots,\overline{\phi}_{2n},\overline{\phi}_{2n+1})
\end{displaymath}
is an extension of $\phi$ to the entire space $\mathbb{R}^n$.

Finally, we apply the infinitesimal characterization from
Proposition \ref{l:EquivTameDiffOpenInt} to conclude that
$\overline{\phi}$ is a tame map. Since the domain $U=\mathbb{R}^n$
is quasiconvex with constant $1$,  Proposition
\ref{l:EquivTameDiffOpenInt} (2) implies that $\overline{\phi}$ is
$(L_{n+1}',\ldots,L_{2n}',L_{2n+1}')$-tame with
\begin{displaymath}
L_i'= C(n)\lambda\quad \text{for all }i\in
\{n+1,\ldots,2n\}\quad\text{and}\quad L_{2n+1}'=2 C(n) \lambda.
\end{displaymath}
\end{proof}

\begin{remark}
Applying Theorem \ref{t:glaeser} to the last component
$\phi_{2n+1}$ of a tame map $\phi$ and extending
$\psi=(\phi_{n+1},\ldots,\phi_{2n})$ by formula
\eqref{derivata2n+1k2n2} ensures that
\begin{displaymath}
\nabla \overline{\phi}_{2n+1} =
(\overline{\phi}_{n+1},\ldots,\overline{\phi}_{2n+1})
\end{displaymath}
is satisfied \emph{by definition}. If, on the other hand, one
tried to extend $(\phi_{n+1},\ldots,\phi_{2n})$ first, then one
would have to make sure that the extension can arise as gradient,
and this would entail a further differential constraint for
$\overline{\phi}_i$, $i\in \{n+1,\ldots,2n\}$, cf.\ the related
\emph{isotropic mappings} appearing in
\cite{MR2659687,FasslerMSc}.
\end{remark}

\subsection{Extension of tame maps in the case
$k<n$}\label{ssextk<n}

In this section, we prove the extension result for tame maps
$\phi:E\subset \mathbb{R}^k \to \mathbb{R}^{2n+1-k}$ in the case
$k<n$. The situation is qualitatively different from the
middle-dimensional case $k=n$ discussed in the previous section.
Indeed, recall from Remark \ref{r:InfinitesimalK<N} that the last
component of an entire tame map $\phi:\mathbb{R}^k \to
\mathbb{R}^{2n+1-k}$ satisfies almost everywhere the nonlinear
gradient equation
\begin{displaymath}
\nabla \phi_{2n+1} =
\begin{pmatrix}\phi_{n+1}+\frac{1}{2}\sum_{i=k+1}^n \phi_{n+i}\, \partial_{x_1} \phi_i -\phi_i \,\partial_{x_1} \phi_{n+i}\\\vdots\\
\phi_{n+k}+\frac{1}{2}\sum_{i=k+1}^n  \phi_{n+i}\,\partial_{x_k}
\phi_i -\phi_i\, \partial_{x_k}
\phi_{n+i}\end{pmatrix}\quad\text{almost everywhere},
\end{displaymath}
so we are no longer in a setting where Whitney's extension theorem
is directly applicable. However, it turns out that the extension
in case $k<n$ can be reduced to the case $k=n$. This is best
understood if one thinks of intrinsic Lipschitz graphs instead of
tame maps. The idea is essentially that a $k$-dimensional
intrinsic Lipschitz graph in $\mathbb{H}^n$ for $k<n$ can be
embedded in an $n$-dimensional intrinsic Lipschitz graph. The
latter can be extended using Theorem \ref{propK=N}, and then it
remains to show that one can select a suitable $k$-dimensional
subset of it in order to obtain an extension of the original
graph.

\begin{thm}\label{propKminoreN}
Let $k,n\in \mathbb{N}$ with $1\leq k<n$. An
$(L_{k+1},\ldots,L_{2n+1})$-tame map $$\phi:E \subset \mathbb{R}^k
\to \mathbb{R}^{2n+1-k}$$ can be extended to an
$(L_{k+1},\ldots,L_n,L_{n+1}',\ldots,L_{2n+1}')$-tame map
$\overline{\phi}:\mathbb{R}^k \to \mathbb{R}^{2n+1-k}$ with
$\overline{\phi}|_E = \phi$ and
\begin{displaymath}L_i'= c_n \Big(1+\sum_{j=k+1}^n L_j^2\Big)^{\frac{1}{2}} \max  \left\{|(L_{n+1},\ldots,L_{2n})|,L_{2n+1}+\sum_{i=k+1}^n
L_{n+i}\min\{1,L_i\}\right\}
\end{displaymath}
for $i=n+1,\ldots,2n$, and
\begin{displaymath}
L_{2n+1}'=  c_n  \Big(1+\sum_{j=k+1}^n L_j^2\Big)\max
\left\{|(L_{n+1},\ldots,L_{2n})|,L_{2n+1}+\sum_{i=k+1}^n
L_{n+i}\min\{1,L_i\}\right\}
\end{displaymath}
for a constant $c_n$ that depends only on $n$.
\end{thm}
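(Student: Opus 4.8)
The plan is to reduce the case $k<n$ to the middle-dimensional case $k=n$ handled in Theorem \ref{propK=N}, following the geometric idea sketched in the text: embed a $k$-dimensional tame map into an $n$-dimensional one, extend the latter, and then restrict back to a $k$-dimensional slice. Concretely, given an $(L_{k+1},\ldots,L_{2n+1})$-tame map $\phi=(\phi_{k+1},\ldots,\phi_{2n+1}):E\subset\mathbb{R}^k\to\mathbb{R}^{2n+1-k}$, I would first build from it a map $\widetilde\phi:\widetilde E\to\mathbb{R}^{n+1}$ with $\widetilde E\subset\mathbb{R}^n$ the following way. Write a point of $\mathbb{R}^n$ as $(x,z)$ with $x\in\mathbb{R}^k$ and $z=(z_{k+1},\ldots,z_n)\in\mathbb{R}^{n-k}$. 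On the graph of $\phi$, the coordinates $\phi_{k+1},\ldots,\phi_n$ play the role of the ``extra horizontal'' directions that are frozen; the natural candidate for $\widetilde E$ is the graph $\{(x,\phi_{k+1}(x),\ldots,\phi_n(x)):x\in E\}$ viewed inside $\mathbb{R}^n$, and on it one sets $\widetilde\phi_{n+i}(x,z):=\phi_{n+i}(x)$ for $i=1,\ldots,k$, $\widetilde\phi_{n+i}(x,z):=$ the appropriate function of $(x,z)$ recording $\phi_{n+i}$ for $i=k+1,\ldots,n$, and $\widetilde\phi_{2n+1}(x,z):=\phi_{2n+1}(x)$ plus a correction quadratic in the $\phi_i$'s chosen so that the $k=n$ tameness inequality for $\widetilde\phi$ on $\widetilde E$ becomes exactly the $k<n$ tameness inequality for $\phi$ on $E$ after the substitution $z=(\phi_{k+1}(x),\ldots,\phi_n(x))$. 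The bilinear ``$\tfrac12\sum_{i=k+1}^n\phi_i(y)\phi_{n+i}(x)-\phi_i(x)\phi_{n+i}(y)$'' term in Definition \ref{d:tame}(b) is precisely the symplectic correction that appears when one passes between the $k$-slice and the full $n$-dimensional graph map, so this matching is forced and is really just the statement that an intrinsic Lipschitz graph over $\mathbb{V}^k$ sits inside one over $\mathbb{V}^n$.

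The second step is to verify that $\widetilde\phi$ is genuinely tame on $\widetilde E$ in the sense of Definition \ref{d:tame} with $k$ replaced by $n$, and to track constants. Here the Lipschitz bounds on the components $\widetilde\phi_{n+i}$ relative to the Euclidean metric on $\widetilde E\subset\mathbb{R}^n$ involve dividing by $|(x,z)-(x',z')|\geq|x-x'|$, which only helps, but the components for $i=k+1,\ldots,n$ that now depend on $z$ pick up factors governed by how $\widetilde E$ sits in $\mathbb{R}^n$; since $\widetilde E$ is the graph of $(\phi_{k+1},\ldots,\phi_n)$ which is $|(L_{k+1},\ldots,L_n)|$-Lipschitz, the bi-Lipschitz distortion between the intrinsic parameter $x$ and the Euclidean position $(x,z)$ on $\widetilde E$ is $(1+\sum_{j=k+1}^n L_j^2)^{1/2}$, which is exactly the factor appearing in the statement. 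The quadratic correction in $\widetilde\phi_{2n+1}$ contributes, via \eqref{eq:reformulation} and the Lipschitz bounds, the extra summand $\sum_{i=k+1}^n L_{n+i}\min\{1,L_i\}$ to the effective last-component constant (the $\min\{1,L_i\}$ because one can bound the increment of $\phi_i$ by either $L_i|x-y|$ or, crudely, by the Euclidean diameter — cf.\ Remark \ref{r:FromTameToLip}). So $\widetilde\phi$ is tame on $\widetilde E$ with last constant comparable to $\max\{|(L_{n+1},\ldots,L_{2n})|,\,L_{2n+1}+\sum_{i=k+1}^n L_{n+i}\min\{1,L_i\}\}$ up to the distortion factor.

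The third step is to apply Theorem \ref{propK=N} to extend $\widetilde\phi$ from $\widetilde E\subset\mathbb{R}^n$ to a tame map $\overline{\widetilde\phi}:\mathbb{R}^n\to\mathbb{R}^{n+1}$, with the constants it provides, and then to \emph{restrict}. The restriction is the delicate point: I want to recover an extension of the original $\phi$, so I need to choose, for each $x\in\mathbb{R}^k$, a value $z(x)\in\mathbb{R}^{n-k}$ with $z(x)=(\phi_{k+1}(x),\ldots,\phi_n(x))$ on $E$, and then set $\overline\phi_i(x):=\overline{\widetilde\phi}$-components evaluated at $(x,z(x))$, undoing the quadratic correction. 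The natural choice is to extend each $\phi_i$, $i=k+1,\ldots,n$, from $E$ to $\mathbb{R}^k$ as an $L_i$-Lipschitz function by the classical (McShane/Kirszbraun) extension — these are ordinary real-valued Lipschitz functions, so this costs nothing — and to use $\overline{\widetilde\phi}_{n+i}\circ(\mathrm{id},z(\cdot))$ for $i=1,\ldots,k$, $z(x):=(\phi_{k+1}^{\mathrm{ext}}(x),\ldots)$, and $\overline\phi_{2n+1}(x):=\overline{\widetilde\phi}_{2n+1}(x,z(x))-\tfrac12\sum_{i=k+1}^n(\cdots)$ with the correction written in terms of the extended $\phi_i$. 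One then checks the $k<n$ tameness inequalities for this $\overline\phi$ directly from those of $\overline{\widetilde\phi}$, composing with the map $x\mapsto(x,z(x))$ which is $(1+\sum L_j^2)^{1/2}$-Lipschitz, which is where the remaining distortion factor in $L_{2n+1}'$ enters (hence the square $(1+\sum L_j^2)$ there versus the square root in $L_i'$: the last-component inequality is quadratic in the displacement). The components $\overline\phi_{k+1},\ldots,\overline\phi_n$ are just the extended $\phi_i$, so their constants remain $L_i$, matching the statement.

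The main obstacle I anticipate is not the construction but the bookkeeping in the third step: one must confirm that the restriction of a tame map along the graph of a Lipschitz section is again tame \emph{with the claimed constants}, and in particular that no hidden dependence on $L_{n+i}$ alone (without the $\min\{1,L_i\}$ factor) sneaks in from the $z$-dependent components $\overline{\widetilde\phi}_{n+i}$, $i=k+1,\ldots,n$. This requires care in how those components are defined on $\widetilde E$ in step one — they should be built so that on the slice they reproduce $\phi_{n+i}(x)$ exactly and off the slice they are controlled purely by the ambient Lipschitz constant from Theorem \ref{propK=N} — and then, when restricting back, one uses Remark \ref{r:2DimTameLip} to pass to the one-sided inequality and \eqref{eq:reformulation} to split the bilinear term into increments, each estimated by the relevant Lipschitz constant. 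A secondary subtlety is whether $\widetilde E$ needs to be closed for Theorem \ref{propK=N}; since Glaeser's theorem (Theorem \ref{t:glaeser}) is stated for arbitrary subsets $E\subset\mathbb{R}^n$, this is not an issue. Once these points are settled, collecting the three distortion contributions — the $(1+\sum L_j^2)^{1/2}$ from embedding, the $C(n)$ from Theorem \ref{propK=N}, and another $(1+\sum L_j^2)^{1/2}$ (for $L_i'$) or $(1+\sum L_j^2)$ (for $L_{2n+1}'$) from restricting — yields the stated constants with $c_n$ absorbing $C(n)$ and universal factors.
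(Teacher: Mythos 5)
Your construction coincides with the paper's: embed $E$ as the Lipschitz graph $\Gamma^{(\phi_{k+1},\ldots,\phi_n)}(E)\subset\mathbb{R}^n$, define on it an auxiliary map whose last component carries the correction $\tfrac12\sum_{i=k+1}^n\eta_i\,\phi_{n+i}$, show it is tame with last constant $L_{2n+1}+\sum_{i=k+1}^n L_{n+i}\min\{1,L_i\}$ (your explanation of the $\min\{1,L_i\}$ is the paper's), extend it by Theorem \ref{propK=N}, extend $\phi_{k+1},\ldots,\phi_n$ by McShane, and restrict along $x\mapsto(x,\overline{\phi}_{k+1}(x),\ldots,\overline{\phi}_n(x))$ while undoing the correction, exactly as in \eqref{goodextension}. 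The one point to fix is your constant bookkeeping: the embedding step costs nothing, since on the graph every component is a function of $x$ alone and graph distances dominate $|x-y|$, so the tameness constants there are exactly $L_i$ and $L_{2n+1}+\sum_{i=k+1}^n L_{n+i}\min\{1,L_i\}$ with no factor $\big(1+\sum_{j=k+1}^n L_j^2\big)^{1/2}$; the entire distortion --- $\big(1+\sum_{j=k+1}^n L_j^2\big)^{1/2}$ for the middle components and $\big(1+\sum_{j=k+1}^n L_j^2\big)$ for the quadratic last condition --- enters only when composing with the Lipschitz section in the restriction step. As written, your final tally counts the factor once \emph{from embedding} and again \emph{from restricting}, which would give $L_i'$ of order $\big(1+\sum_j L_j^2\big)$ and $L_{2n+1}'$ of order $\big(1+\sum_j L_j^2\big)^{3/2}$, worse than the statement and not absorbable into $c_n$; with the corrected accounting your argument yields precisely the stated constants.
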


\begin{proof} Since $k<n$, the Lipschitz map $(\phi_{k+1},\ldots,\phi_n)$
has at least one component, and we can consider the associated
$k$-dimensional Lipschitz graph
\begin{equation*}
\Gamma ^{(\phi _{k+1}, \dots , \phi _n)} (E):= \{ (x , \phi
_{k+1}(x), \dots , \phi _n (x)) \, :\, x\in E  \} \subset \R^n.
\end{equation*}
The remaining components of $\phi$ are used to define $ f: \Gamma
^{(\phi _{k+1}, \dots , \phi _n)} (E)\subset \mathbb{R}^n \to
\R^{n+1} $ by
\begin{align}\label{definf}
&f(\eta _1,\dots , \eta_n):=
(f_{n+1}(\eta),\ldots,f_{2n}(\eta),f_{2n+1}(\eta)):=\\&\left(\phi
_{n+1}(\eta _1,\dots, \eta _k), \dots , \phi _{2n} (\eta _1,\dots,
\eta _k), \phi _{2n+1} (\eta _1,\dots, \eta _k) +\tfrac{ 1}{ 2}
\sum_{i=k+1}^{n} \eta_i\phi _{n+i}(\eta _1,\dots, \eta
_k)\right).\notag
\end{align}
Firstly, we show that $f$ is a tame map and so we can apply
Theorem \ref{propK=N} to find an extension $\bar f=(\bar f_{n+1},
\dots , \bar f_{2n+1}):\R^n \to \R^{n+1}$ of $f$  with the
corresponding tameness assumption satisfied. Second, if $(\bar
\phi _{k+1}, \dots , \bar \phi _{n})$ denotes a suitable Euclidean
Lipschitz extension of $( \phi _{k+1}, \dots , \phi _{n})$, we
prove that the map $\bar \phi : \R^k \to \R^{2n+1-k}$ given by
\begin{align}\label{goodextension}
\bar \phi (x):= \Biggl(&\bar \phi _{k+1}(x), \dots , \bar \phi _{n}(x),\bar f_{n+1}  (x , \bar \phi _{k+1}(x), \dots ,\bar \phi _n (x)) ,
\dots ,\bar f_{2n}  (x , \bar \phi _{k+1}(x), \dots , \bar \phi _n (x)),\notag \\
& \, \bar f_{2n+1}  (x , \bar \phi _{k+1}(x), \dots , \bar \phi _n
(x))- \tfrac{ 1}{ 2} \sum_{i=k+1}^{n} \bar\phi _{i}(x)\bar f_{n+i}
(x ,\bar \phi _{k+1}(x), \dots ,\bar \phi _n (x)) \Biggl),
\end{align}
 is the tame  extension of $\phi$ to $\R^k$ we are looking for.
We begin by proving that the map $f$ defined  in \eqref{definf} is
$(L_{n+1}'',\ldots,L_{2n+1}'')$-tame with
\begin{equation}\label{eq:L''}
L_i'':= L_i,\quad i\in \{n+1,\ldots,2n\},\quad\text{and}\quad
L_{2n+1}'':= L_{2n+1} + \sum_{i=k+1}^n L_{n+i}\min\{1,L_i\} .
\end{equation}
To see this, fix two points
 \begin{equation}\label{eq:p_q}
p=(x,\phi _{k+1}(x), \dots , \phi _n (x)), q=(y,\phi _{k+1}(y),
\dots , \phi _n (y)) \in \Gamma ^{(\phi _{k+1}, \dots , \phi _n)}
(E).
\end{equation}
 The components $ f_{n+1}, \dots , f_{2n}$ are clearly
Lipschitz since, for $i=n+1,\ldots,2n$, we have by the
$L_i$-Lipschitz continuity of $\phi_i$ that
\begin{align*}
|f_i(p)-f_i(q)|&=\left|\phi_i\left(x,\phi _{k+1}(x), \dots , \phi
_n (x)\right)-\phi_i\left(y,\phi _{k+1}(y), \dots , \phi _n
(y)\right)\right|\leq L_i   |p-q|.
\end{align*}
It remains to check that the tameness condition \eqref{i:d2} in
Definition \ref{d:tame} holds with constant $L_{2n+1}'''$.
Using the notation $\psi=(\phi_{n+1},\dots,  \phi _{n+k})$ and
recalling the expressions for $p$ and $q$ given in \eqref{eq:p_q},
we have that
\begin{equation*}
\begin{aligned}
I(p,q):=\big| f_{2n+1}(q) - f_{2n+1}(p) &-\left\langle (f_{n+1}(q)
, \ldots, f_{2n} (q)), ( q_1-p_1, \ldots , q_{n} -p_{n}
)\right\rangle
 \big| \\
  = \Biggl| \phi_{2n+1}(y) - \phi_{2n+1}(x) &-\langle \psi (y), y-x \rangle
\\&+\tfrac{1}{2}\sum_{i=k+1}^n \phi_i(y)\phi_{n+i}(y)-
\phi_i(x)\phi_{n+i}(x)
 -2 \phi _{n+i}(y)(\phi _{i}(y)- \phi_{i}  (x))  \Biggl|.
\\
  \leq \Biggl| \phi_{2n+1}(y)- \phi_{2n+1}(x) &-\langle \psi (y), y-x \rangle -\tfrac{ 1}{ 2}  \sum_{i=k+1}^{n} \phi _{i}(y)\phi _{n+i}(x)- \phi _{i}(x)\phi _{n+i}(y) \Biggl| \\
& + \tfrac{1}{2}\Biggl|  \sum_{i=k+1}^{n}    \phi _{n+i}(y) ( \phi _{i}(x)- \phi _{i}(y)) - \phi _{n+i}(x) ( \phi _{i}(x)- \phi _{i}(y))  \Biggl|, \\
\end{aligned}
\end{equation*}
and analogously with the roles of $p$ and $q$ reverted. Summing up
the two expressions, we obtain by the
$(L_{k+1},\ldots,L_{2n+1})$-tameness of $\phi$ that
\begin{align*}
I(p,q)+I(q,p) & \leq L _{2n+1}|y-x|^2 +  \Biggl| \sum_{i=k+1}^{n} (\phi _{n+i}(y) -  \phi _{n+i}(x)) ( \phi _{i}(x)- \phi _{i}(y))  \Biggl| \\
  & \leq L _{2n+1}|y-x|^2 + \sum_{i=k+1}^{n} L_{n+i} |y-x| \,
  \min\{1,L_i\}
  |p-q|\\
 & \leq   \left(L_{2n+1} + \sum_{i=k+1}^n L_{n+i}\min\{1,L_i\} \right)
 |q-p|^2.
\end{align*}
Hence  condition \eqref{i:d2} in Definition \ref{d:tame} holds,
and we have shown that $f$ is $(L_{n+1}'',\ldots,L_{2n+1}'')$-tame
with the constants defined in \eqref{eq:L''}. By Theorem
\ref{propK=N} applied to $f$, it then follows that there exists an
extension $\bar f=(\bar f_{n+1}, \dots , \bar f_{2n+1}):\R^n \to
\R^{n+1}$ of $f$ which is $(L_{n+1}''',\ldots,L_{2n+1}''')$-tame
with
\begin{align}\label{eq:Li'''}
L_{i}'''&= 2 C(n) \max \left\{|(L_{n+1}'',\ldots,L_{2n}'')|,L_{2n+1}''\right\}\notag\\
&= 2 C(n) \max
\left\{|(L_{n+1},\ldots,L_{2n})|,L_{2n+1}+\sum_{i=k+1}^n L_{n+i}
\min\{1,L_i\}\right\}.
\end{align}

To construct an extension $\overline{\phi}$ for the given map
$\phi$, we first extend independently the components
$\phi_{k+1},\ldots,\phi_n$. For $i=k+1,\ldots, n$, we simply apply
McShane's extension theorem to extend the $L_i$-Lipschitz function
$\phi_i:E \to \mathbb{R}$ to an $L_i$-Lipschitz function
$\overline{\phi}_i:\mathbb{R}^k \to \mathbb{R}$. With the
extensions $\overline{f}$ and
$\overline{\phi}_{k+1},\ldots,\overline{\phi}_n$ at hand, we are
now able to prove that the map $\bar \phi : \R^k \to \R^{2n+1-k}$
defined  in \eqref{goodextension} is the desired tame extension of
$\phi$.

Recalling that
$\overline{f}|_{\Gamma^{\phi_{k+1},\ldots,\phi_n}(E)}=f$ and
keeping in mind expression \eqref{definf} for $f$, it is clear
that $\bar \phi$ is an extension of $\phi$. Moreover, for every
$i=n+1,\ldots,2n$, the function
\begin{displaymath}
\bar \phi_i:\mathbb{R}^k \to \mathbb{R},\quad \bar \phi_i(x)=\bar
f_i(x,\bar \phi_{k+1}(x),\ldots,\bar \phi_n(x))
\end{displaymath}
 is Lipschitz:
\begin{align*}
|\overline{\phi}_i(x)-\overline{\phi}_i(y)|\leq
\mathrm{Lip}(\overline{f}_i)\Big(1+\sum_{j=k+1}^n
L_j^2\Big)^{\frac{1}{2}}|x-y|.
\end{align*}
Recalling formula \eqref{eq:Li'''} for
$L_i'''=\mathrm{Lip}(\overline{f}_i)$, we conclude that
$\overline{\phi}_i$ is $L_i'$-Lipschitz with
\begin{displaymath}L_i'= c_n\Big(1+\sum_{j=k+1}^n L_j^2\Big)^{\frac{1}{2}}\max  \left\{|(L_{n+1},\ldots,L_{2n})|,L_{2n+1}+\sum_{i=k+1}^n
L_{n+i}\min\{1,L_i\}\right\}
\end{displaymath}
for $i=n+1,\ldots,2n$ and a constant $c_n$ that depends
only on $n$.

As a consequence, the only non-trivial condition to check  for the
map $\bar \phi$ is the second part of the tameness condition,
namely \eqref{i:d2} in Definition \ref{d:tame}. Let $x,y \in
\R^k,$ and, for simplicity, put $p:=(x,\bar \phi _{k+1}(x), \dots
,\bar \phi _n (x)), q:=(y,\bar \phi _{k+1}(y), \dots , \bar \phi
_n (y))$ and
\begin{displaymath}
\bar \psi(y):=(\bar \phi_{n+1}(y),\ldots,  \bar \phi _{n+k}
(y))=(\overline{f}_{n+1}(q),\ldots,\overline{f}_{n+k}(q)).
\end{displaymath}
We have that
 \begin{equation*}
\begin{aligned}
J(x,y):=& \left| \bar \phi_{2n+1}(y) - \bar \phi_{2n+1}(x)
-\langle \bar \psi (y),
y-x\rangle -\tfrac{ 1}{ 2} \sum_{i=k+1}^n \bar \phi_i(y)\bar \phi_{n+i} (x) -\bar \phi_i(x)\bar \phi_{n+i} (y)  \right|\\
 & \overset{\eqref{goodextension}}{\leq} \left| \bar f _{2n+1} (q) - \bar f _{2n+1} (p)-\left\langle
 (
\bar f_{n+1}(q), \ldots , \bar f_{2n} (q) ), ( q_1-p_1, \ldots ,
q_{n} -p_{n} )\right\rangle
 \right| \\
 & \quad + \tfrac{ 1}{ 2} \Biggl|  \sum_{i=k+1}^{n} \bar \phi_{i}(y)( \bar f_{n+i} (q) - \bar f _{n+i}(p)) -
  \bar \phi_{i}(x)( \bar f_{n+i} (q) - \bar f _{n+i}(p))
  \Biggl|\\
 & \leq \left| \bar f _{2n+1} (q) - \bar f _{2n+1} (p)-\left\langle
 (
\bar f_{n+1}(q), \ldots , \bar f_{2n} (q) ), ( q_1-p_1 , \ldots ,
q_{n} -p_{n} )\right\rangle
 \right| \\
 & \quad + \tfrac{ 1}{ 2} \sum_{i=k+1}^{n} |\bar \phi_{i}(y)-  \bar \phi_{i}(x)| | \bar f_{n+i} (q) - \bar f
 _{n+i}(p)|.
\end{aligned}
\end{equation*}
Summing this term and the corresponding expression with the roles
of $x$ and $y$ reverted, we find by the tameness condition of
$\overline{f}$ and the Lipschitz continuity of $\bar
\phi_{k+1},\ldots,\bar \phi_n$ that
\begin{align*}
J(x,y)+J(y,x)&\leq L_{2n+1}''' |p-q|^2 + \sum_{i=k+1}^n L_i
L_{n+i}''' |x-y|
|p-q|\\
&\leq \Big(L_{2n+1}''' \Big(1+\sum_{j=k+1}^n L_j^2\Big)+
\sum_{i=k+1}^n L_i L_{n+i}''' \Big(1+\sum_{j=k+1}^n
L_j^2\Big)^{\frac{1}{2}}\Big) |x-y|^2,
\end{align*}
for arbitrary $x,y\in \mathbb{R}^k$. Now we only have to recall
the expressions for $L_j'''$, $j=n+1,\ldots,2n$, given in
\eqref{eq:Li'''}. Then, for a constant $c_n$ that depends only on
$n$, the last tameness constant of $\overline{\phi}$ can be chosen
as
\begin{displaymath}
L_{2n+1}'=  c_n\Big(1+\sum_{j=k+1}^n L_j^2\Big)\max
\left\{|(L_{n+1},\ldots,L_{2n})|,L_{2n+1}+\sum_{i=k+1}^n
L_{n+i}\min\{1,L_i\}\right\}.
\end{displaymath}
\end{proof}

\subsection{Extension result for low-dimensional intrinsic
Lipschitz graphs}

Combining the previous results, we establish the extension theorem
for low-dimensional intrinsic Lipschitz graphs in $\mathbb{H}^n$,
Theorem \ref{t:MainIntro} from the introduction.

\begin{proof}[Proof of Theorem \ref{t:MainIntro}] Let $1\leq k
\leq n$. First, if $\phi=(\phi_{k+1},\ldots,\phi_{2n+1})$ is
intrinsic $L$-Lipschitz on $E\subset \mathbb{R}^k$, Proposition
\ref{p:FromIntrLipToTameGENERAL} implies that
$(\phi_{k+1},\ldots,\phi_{2n},-\phi_{2n+1})$ is
$(L_{k+1},\ldots,L_{2n+1})$-tame with
\begin{displaymath}
L_i=L\quad\text{for }i\neq 2n+1,\quad\text{and}\quad L_{2n+1}= 2L^2.
\end{displaymath}
Applying the extension result from Theorem \ref{propK=N}, if
$k=n$, and Theorem \ref{propKminoreN}, if $k<n$, to this tame map
yields an $(L_{k+1}',\ldots,L_{2n+1}')$-tame extension
$(\overline{\phi}_{k+1},\ldots,\overline{\phi}_{2n},-\overline{\phi}_{2n+1})$,
where the tameness constants depend only on
$L_{k+1},\ldots,L_{2n+1}$ (thus on $L$), $k$, and $n$. Finally, we
use Proposition \ref{p:FromTameToIntrLip} to conclude that $
\overline{\phi}:=
(\overline{\phi}_{k+1},\ldots,\overline{\phi}_{2n},\overline{\phi}_{2n+1})
$ is an intrinsic $L'$-Lipschitz function on $\V$ with
\begin{displaymath}
L':=\max\{|(L_{k+1}',\ldots,L_{2n}')|,\sqrt{L_{2n+1}'}\}.
\end{displaymath}

The better quantitative control over the intrinsic Lipschitz
constant if $k=n=1$ follows since Proposition
\ref{p:FromIntrLipToTameGENERAL} yields $ L_2=\min\{L,2L^2\} $ in
this case. Then $L_2'$ and $L_3'$ in Theorem  \ref{propK=N} can be
bounded from above by a constant times $L^2$, and it follows that
we can take $L'= C\max\{L^2,L\}$ for a suitable constant $C$.
\end{proof}

\section{Corona decomposition for $1$-dimensional intrinsic
Lipschitz graphs}\label{s:corona}

The main result of this section is a corona decomposition of
$1$-dimensional iLG in $\mathbb{H}^n$, $n>1$, by iLG with smaller
Lipschitz constant (Theorem \ref{Theorem 3.15n>1}). The
corresponding result for $n=1$ (and tame maps) was proven in
\cite[Theorem 3.15]{fssler2019singular}, motivated by an
application to singular integral operators on $1$-dimensional iLG
in $\mathbb{H}^1$. As was the case for \cite{fssler2019singular},
our argument is ultimately based on a corona decomposition for
Euclidean Lipschitz graphs. The version that we will employ looks
a little different from the formulations in the literature, so we
state it in Section \ref{ss:coronaEucl} and explain how to deduce
it from the ``standard'' corona decomposition for Euclidean
Lipschitz graphs given in \cite[p.57, Definition 3.19 and p.61,
(3.33)]{MR1251061}. Based on these preparations, we prove the
result for iLG in Section \ref{s:CoronaiLG}.

\subsection{Corona decomposition for $1$-dimensional Euclidean
Lipschitz graphs}\label{ss:coronaEucl}
%
%

\begin{definition}[Dyadic intervals and trees]\label{Dyadic intervals and trees}
The family of standard \emph{dyadic intervals} of $\R$ is called
``$\calD$''. For $j \in \Z$, we  write $\calD_{j} \subset \calD$
for the dyadic intervals $Q$ of length $|Q|=2^{-j}$. A collection
$\calT \subset \calD$ is  a \emph{tree} if
\begin{itemize}
\item[(T1)] $\calT$ contains a \emph{top interval} $Q(\calT)$,
that is, a unique maximal element. \item[(T2)] $\calT$ is
\emph{coherent}: if $Q \in \calT$, then $Q' \in \calT$ for all
dyadic intervals $Q \subset Q' \subset Q(\calT)$. \item[(T3)] If
$Q \in \calT$, then either both, or neither, of the children of
$Q$ lie in $\calT$.
\end{itemize}
\end{definition}

\begin{definition}[Coronization]\label{d:coronization} A decomposition
$\mathcal{D} = \mathcal{G} \dot{\cup} \mathcal{B}$ of
$\mathcal{D}$ into \emph{good} intervals $\mathcal{G}$ and
\emph{bad} intervals $\mathcal{B}$ (with $\mathcal{G}\cap
\mathcal{B}=\emptyset$) is called a \emph{coronization} if there
exists a constant $C$ such that the following conditions are
satisfied:
 \begin{enumerate}
\item The intervals in $\mathcal{B}$ satisfy a Carleson packing
condition:
\begin{equation*}
\sum_{\substack{ Q \in \mathcal{B} \\ Q \subset Q_0} } |Q| \leq C
|Q_0|, \quad \text{for all } Q_0 \in \mathcal{D}.
\end{equation*}
\item The intervals in $\mathcal{G}$ can be decomposed into a
\emph{forest} $\mathcal{F}$ of disjoint trees $\mathcal{T}$
\begin{equation*}
\mathcal{G} = \dot{\bigcup}_{\mathcal{T} \in \mathcal{F}}
\mathcal{T}
\end{equation*}
whose top intervals satisfy a Carleson packing condition:
\begin{equation*}
\sum_{\substack{ \mathcal{T} \in \mathcal{F} \\ Q(\mathcal{T})
\subset Q_0 } } |Q(\mathcal{T})| \leq C |Q_0|, \quad \text{for all
} Q_0 \in \mathcal{D}.
\end{equation*}
\end{enumerate}
\end{definition}

\begin{thm}[Corona decomposition for
Lipschitz maps]\label{t:DSAdaptation} For every $n\in \mathbb{N}$
and $\delta \in (0,1)$, there exists a constant $C \geq 1$ with
the following property. Let $\psi \colon \mathbb{R} \to
\mathbb{R}^{2n-1}$ be $1$-Lipschitz.
Then, there exists a
coronization $\mathcal{D} = \mathcal{B} \dot{\cup} \mathcal{G}$ and a forest $ \mathcal{F}$,
satisfying the conditions in Definition \ref{d:coronization} with
constant $C$, such that the following holds. For every
$\mathcal{T} \in \mathcal{F}$ there is a $2$-Lipschitz linear
function $L_{\mathcal{T}} \colon \mathbb{R} \to \mathbb{R}^{2n-1}$
and a $\delta$-Lipschitz function $\psi_{\mathcal{T}} \colon
\mathbb{R} \to \mathbb{R}^{2n-1}$ such that
\begin{equation}\label{form15Lipschitz}
|\psi(s) - (\psi_{\mathcal{T}} +
L_{\mathcal{T}})(s)|
\leq \delta |Q|, \qquad s
\in 2Q, \; Q \in \mathcal{T},
\end{equation}
where $2Q$ is the interval with the same midpoint as $Q$ but twice
its length.
\end{thm}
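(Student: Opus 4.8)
The strategy is to reduce to the ``standard'' corona decomposition for a Euclidean Lipschitz graph over $\mathbb{R}$ in $\mathbb{R}^{2n-1}$, as stated in \cite[p.~57, Definition 3.19 and p.~61, (3.33)]{MR1251061}, applied to the graph $\Gamma = \{(s,\psi(s)) : s\in \mathbb{R}\}\subset \mathbb{R}^{2n}$, and then to massage the output into the claimed parametric form. First I would fix $\delta\in(0,1)$, choose an auxiliary small parameter $\varepsilon = \varepsilon(\delta,n)>0$ to be pinned down later, and invoke the David--Semmes corona decomposition for $\Gamma$ at approximation level $\varepsilon$. That produces a coronization $\mathcal{D} = \mathcal{B}\,\dot\cup\,\mathcal{G}$ with Carleson constants bounded in terms of $\varepsilon$ and $n$, and for each tree $\mathcal{T}\in\mathcal{F}$ an affine $2$-plane (equivalently, an affine map $A_{\mathcal{T}}:\mathbb{R}\to\mathbb{R}^{2n-1}$ after noting the plane is a graph over the $s$-axis, up to a harmless rotation that I would absorb) such that $\Gamma$ lies within Hausdorff distance $\varepsilon\,\ell(Q)$ of the graph of $A_{\mathcal{T}}$ at the scale and location of each $Q\in\mathcal{T}$. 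Since $\psi$ is $1$-Lipschitz, the approximating planes can be taken uniformly non-vertical, so $A_{\mathcal{T}}$ has slope bounded by an absolute constant; I would then split $A_{\mathcal{T}} = L_{\mathcal{T}} + b_{\mathcal{T}}$ into its linear part $L_{\mathcal{T}}$ and a constant $b_{\mathcal{T}}$, and the bound on the slope gives that $L_{\mathcal{T}}$ is $2$-Lipschitz (adjusting constants, or re-deriving the slope bound directly from the fact that the plane $\varepsilon$-approximates a $1$-Lipschitz graph on an interval of definite length).

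The next step is to convert the geometric ``closeness of $\Gamma$ to a plane'' into the pointwise inequality \eqref{form15Lipschitz} with a function $\psi_{\mathcal{T}}$ that is globally $\delta$-Lipschitz, not merely small on each $2Q$. The standard trick: for a tree $\mathcal{T}$, consider the function $g_{\mathcal{T}} := \psi - L_{\mathcal{T}}$ on $\mathbb{R}$; on every $2Q$, $Q\in\mathcal{T}$, the graph estimate forces $|g_{\mathcal{T}}(s) - b_{\mathcal{T}}| \lesssim \varepsilon\,|Q|$, but it does not control the Lipschitz constant of $g_{\mathcal{T}}$ away from the union of the $2Q$'s, nor the oscillation between different scales inside the tree. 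To fix this I would build $\psi_{\mathcal{T}}$ from $g_{\mathcal{T}}$ by a stopping-time/smoothing construction: restrict attention to the ``reach'' of $\mathcal{T}$, namely the set of $s$ lying in $2Q$ for $Q$ ranging over the \emph{minimal} intervals of $\mathcal{T}$, subtract off the constant $b_{\mathcal{T}}$, and then interpolate/extend off that set in a controlled way. A clean implementation is to note that within a single tree the affine approximants at nested scales differ by $O(\varepsilon\ell(Q))$ (a telescoping estimate along the tree), so $g_{\mathcal{T}} - b_{\mathcal{T}}$, suitably truncated, has increments $\lesssim \varepsilon|s-s'|$ on the tree's reach; choosing $\varepsilon$ small relative to $\delta$ makes this $\le \frac{\delta}{2}|s-s'|$, and then a McShane-type $\frac{\delta}{2}$-Lipschitz extension to all of $\mathbb{R}$ (which is possible since the target-side estimates are componentwise, or using the vector-valued Kirszbraun/McShane bound with an absolute dimensional factor that can be absorbed into $\varepsilon$) yields the global $\delta$-Lipschitz function $\psi_{\mathcal{T}}$. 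On each $2Q$ one checks $|\psi - (\psi_{\mathcal{T}} + L_{\mathcal{T}})| = |g_{\mathcal{T}} - \psi_{\mathcal{T}}| \le |g_{\mathcal{T}} - b_{\mathcal{T}}| + |b_{\mathcal{T}} - \psi_{\mathcal{T}}| \lesssim \varepsilon|Q| \le \delta|Q|$ after the final choice of $\varepsilon$.

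The main obstacle I anticipate is exactly this passage from the \emph{local}, scale-by-scale flatness supplied by \cite{MR1251061} to a \emph{single global} $\delta$-Lipschitz function $\psi_{\mathcal{T}}$ valid on all of $2Q$ for every $Q$ in the tree simultaneously, with the Lipschitz constant of $\psi_{\mathcal{T}}$ (and not just its sup-norm deviation) controlled: one must make sure that the approximating planes of the tree are mutually consistent across scales so that a single affine correction $L_{\mathcal{T}}$ plus a small-slope remainder does the job, and that the coherence axioms (T1)--(T3) of the tree are genuinely used to telescope these plane-to-plane differences. I would isolate this as a lemma (``within a David--Semmes tree the affine approximants vary by $O(\varepsilon\ell(Q))$, hence the deviation function is $O(\varepsilon)$-Lipschitz on the tree's reach''), prove it by the standard telescoping along the chain of ancestors, and then the rest is routine bookkeeping of constants plus a componentwise McShane extension. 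The Carleson packing conditions for $\mathcal{B}$ and for $\{Q(\mathcal{T})\}$ transfer verbatim from \cite{MR1251061}, so the coronization part of the statement requires no new work beyond citing it.
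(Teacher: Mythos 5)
There is a genuine gap, and it is at the very first step: the result you cite from \cite{MR1251061} does \emph{not} provide, for each tree $\mathcal{T}$, a single affine map $A_{\mathcal{T}}$ whose graph approximates $\Gamma=\{(s,\psi(s))\}$ to within $\varepsilon\,\ell(Q)$ on $2Q$ for \emph{every} $Q\in\mathcal{T}$. What the David--Semmes corona decomposition gives per tree is a Lipschitz graph $\Gamma_{\mathcal{T}}$ with small constant $\delta'$, in general tilted, i.e.\ $\Gamma_{\mathcal{T}}=\{R(x,\varphi_{\mathcal{T}}(x)):x\in\mathbb{R}\}$ with $R\in O(2n)$ and $\varphi_{\mathcal{T}}$ $\delta'$-Lipschitz, such that $\mathrm{dist}((s,\psi(s)),\Gamma_{\mathcal{T}})\le\delta'|Q|$ for $s\in 2Q$, $Q\in\mathcal{T}$. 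Your plane-per-tree statement is strictly stronger and false in general: for cubes $Q$ deep inside the tree, the distance from $\Gamma$ to any \emph{fixed} plane is governed by the oscillation of $\Gamma_{\mathcal{T}}$ at the top scale (of order $\delta'\,\ell(Q(\mathcal{T}))$), not by $\varepsilon\,\ell(Q)$; if your statement were true, one could take $\psi_{\mathcal{T}}$ constant and the theorem would be trivial. Consequently the pointwise bound $|g_{\mathcal{T}}(s)-b_{\mathcal{T}}|\lesssim\varepsilon|Q|$ on every $2Q$, on which your construction rests, is unjustified. Your fallback lemma does not repair this: per-cube affine approximants with error $\varepsilon\ell(Q)$ only force the slopes of parent and child approximants to differ by $O(\varepsilon)$, and these increments accumulate along an infinite chain of ancestors, so telescoping yields a slope drift of order $\varepsilon$ \emph{per generation}, not a globally $O(\varepsilon)$-Lipschitz deviation on the tree's reach. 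The coherence of directions across all scales of a tree is precisely what the David--Semmes stopping time encodes, and it is only accessible through the per-tree Lipschitz graph $\Gamma_{\mathcal{T}}$ itself.

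For comparison, the paper's proof starts from exactly that object and the whole work is the re-graphing: one writes $\Gamma_{\mathcal{T}}$ in coordinates via the matrix $(b_{i,j})$ of $R$, proves the quantitative transversality bound $|b_{1,1}|\ge 1/\sqrt5$ (using the approximation property at the two endpoints of an interval of the tree together with the $1$-Lipschitz continuity of $\psi$), deduces that the linear map $L_{\mathcal{T}}$ with components $b_{l,1}/b_{1,1}$ is $2$-Lipschitz and that the reparametrization $z(x)=b_{1,1}x+\sum_l b_{1,l}\varphi_{\mathcal{T},l}(x)$ is bi-Lipschitz, and then reads off $\psi_{\mathcal{T}}$ from \eqref{eq:Psi_T} with Lipschitz constant $\lesssim\sqrt{2n-1}\,\delta'$, finally choosing $\delta'=\delta/(100\sqrt{2n-1})$. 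Your remark about a slope bound ``because the plane approximates a $1$-Lipschitz graph'' is the right instinct, but it must be applied to the direction of the tree's approximating \emph{Lipschitz graph}, and the bi-Lipschitz change of variables needed to express that tilted graph as $\psi_{\mathcal{T}}+L_{\mathcal{T}}$ over the horizontal axis is the substantive step your outline skips.
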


\begin{proof}
For $n=1$, the result was deduced in \cite[Theorem
3.20]{fssler2019singular} from the corona decomposition in
\cite[p.61, (3.33)]{MR1251061}.  The same approach works for
$n>1$, although the reduction to \cite[p.61, (3.33)]{MR1251061} is
now a bit more involved.

\medskip

To start the proof, let us fix $n>1$. By  \cite[p.61, (3.33) and
p.328, \S 2.2]{MR1251061} we know that for every $\delta'\in
(0,1)$, there exists a constant $C=C(n,\delta')$ such that, for
every $1$-Lipschitz function $\psi \colon \mathbb{R} \to
\mathbb{R}^{2n-1}$, there is a coronization $\mathcal{D} =
\mathcal{B} \dot{\cup} \mathcal{G}$ with constant $C$ that
satisfies the following property. For every tree $\mathcal{T}$ in
the associated forest  $\mathcal{F}$  there is a $1$-dimensional
$\delta'$-Lipschitz graph $\Gamma_{\mathcal{T}}$ such that
\begin{equation}\label{eq:DS_approx}
\mathrm{dist}\left((s,\psi(s)),\Gamma_{\mathcal{T}}\right)\leq
\delta' |Q|,\quad \text{for all }s\in 2Q,\, Q\in \mathcal{T}.
\end{equation}
In other words, there exists a $\delta'$-Lipschitz function
$\varphi_{\mathcal{T}}:\mathbb{R} \to \mathbb{R}^{2n-1}$ and $R\in
O(2n)$ such that \eqref{eq:DS_approx} holds for
\begin{equation}\label{eq:RotatedGraph}
\Gamma_{\mathcal{T}} = \{R(x,\varphi_{\mathcal{T}}(x)):\; x\in
\mathbb{R}\}.
\end{equation}
To be precise, \cite[p.61, (3.33)]{MR1251061} refers to a system
of dyadic cubes on the graph of $\psi$, rather than in the domain
$\mathbb{R}$, but \eqref{eq:DS_approx} is easily deduced. The thus
given coronization is the same that appears in the statement that
we are about to prove, so the only challenge is to find
$\delta'(\delta,n)$ so that we can deduce from
\eqref{eq:DS_approx} that \eqref{form15Lipschitz} holds for
suitable $\psi_{\mathcal{T}}$ and $L_{\mathcal{T}}$. As it will be
convenient to work in coordinates, we represent $R$ as a matrix
with respect to the standard basis of $\mathbb{R}^{2n}$,
\begin{equation}\label{eq:RMatr}
R=
\begin{pmatrix}b_{1,1}&\cdots&b_{1,2n}\\&\ddots&\\b_{2n,1}&\cdots&b_{2n,2n}\end{pmatrix},
\end{equation}
so that for
$\varphi_{\mathcal{T}}=(\varphi_{\mathcal{T},2},\ldots,\varphi_{\mathcal{T},2n})$
the identity \eqref{eq:RotatedGraph} reads
\begin{equation}\label{eq:Gamma_T_coords}
\Gamma_{\mathcal{T}} = \left\{\begin{pmatrix}b_{1,1}x +
\sum_{l=2}^{2n} \varphi_{\mathcal{T},l}(x) b_{1,l}\\\vdots\\
b_{2n,1}x + \sum_{l=2}^{2n} \varphi_{\mathcal{T},l}(x) b_{2n,l}
\end{pmatrix}:\; x\in \mathbb{R}\right\}.
\end{equation}
Then the approximation property \eqref{eq:DS_approx} means exactly
that for every $Q\in \mathcal{T}$, and for every $s\in 2Q$, there
exists $x_s\in \mathbb{R}$ such that
\begin{equation}\label{eq:DS_approx2}
\left|\begin{pmatrix}s\\\psi_2(s)\\\vdots\\\psi_{2n}(s)\end{pmatrix}
-
\begin{pmatrix}
b_{1,1}x_s +
\sum_{l=2}^{2n} \varphi_{\mathcal{T},l}(x_s) b_{1,l}\\
b_{2,1}x_s +
\sum_{l=2}^{2n} \varphi_{\mathcal{T},l}(x_s) b_{2,l}\\\vdots\\
b_{2n,1}x_s + \sum_{l=2}^{2n} \varphi_{\mathcal{T},l}(x_s)
b_{2n,l}
\end{pmatrix} \right| \leq \delta' |Q|.
\end{equation}
For a given point $s$, there can be several points with this
property, but we just choose one of them and call it $x_s$. We
will apply \eqref{eq:DS_approx2} for small enough $\delta'\in
(0,1)$ depending on $n$ and the parameter $\delta$ in the
statement of the theorem. The precise condition will appear in
\eqref{eq:delta'} below, but the bound on $\delta'$ has to be
chosen such that $\Gamma_{\mathcal{T}}$ in
\eqref{eq:Gamma_T_coords} can be written as graph \emph{over the
$x_1$-axis} of a function of the form
$\psi_{\mathcal{T}}+L_{\mathcal{T}}$, where
$\psi_{\mathcal{T}}:\mathbb{R} \to \mathbb{R}^{2n-1}$ is
$\delta$-Lipschitz, and $L_{\mathcal{T}}:\mathbb{R}\to
\mathbb{R}^{2n-1}$ is linear with Lipschitz constant $2$. For this
purpose, we define
\begin{displaymath}
z(x):= b_{1,1}x + \sum_{l=2}^{2n} \varphi_{\mathcal{T},l}(x)
b_{1,l}
\end{displaymath}
and, recalling \eqref{eq:Gamma_T_coords}, our goal is to write
\begin{equation}\label{eq:Goal_z(x)}
\begin{pmatrix}b_{1,1}x +
\sum_{l=2}^{2n} \varphi_{\mathcal{T},l}(x) b_{1,l}\\
b_{2,1}x +
\sum_{l=2}^{2n} \varphi_{\mathcal{T},l}(x) b_{2,l}\\\vdots\\
b_{2n,1}x + \sum_{l=2}^{2n} \varphi_{\mathcal{T},l}(x) b_{2n,l}
\end{pmatrix} = \begin{pmatrix} z(x) \\ \psi_{\mathcal{T},2}(z(x))
+ L_{\mathcal{T},2}(z(x))\\ \vdots\\ \psi_{\mathcal{T},2n}(z(x)) +
L_{\mathcal{T},2n}(z(x))\end{pmatrix},\quad x\in \mathbb{R}
\end{equation}
for $\psi_{\mathcal{T}}$ and $L_{\mathcal{T}}$ as mentioned above.
Assume for a moment that we know that $b_{1,1}\neq 0$. Then we can
define
\begin{equation}\label{eq:Def_L_T}
L_{\mathcal{T},l}(x):= \frac{b_{l,1}}{b_{1,1}}x,\quad \text{for
}l=2,\ldots,2n
\end{equation}
and solving \eqref{eq:Goal_z(x)} for $\psi_{\mathcal{T}}(z(x))$
yields
\begin{equation}\label{eq:Psi_T}
\psi_{\mathcal{T},l}(z(x))= \sum_{i=2}^{2n}
\left(b_{l,i}-\tfrac{b_{l,1}}{b_{1,1}}b_{1,i}\right)\varphi_{\mathcal{T},i}(x),\quad\text{for
}l=2,\ldots,2n.
\end{equation}
To establish that the thus defined functions $L_{\mathcal{T}}$ and
$\psi_{\mathcal{T}}$ are Lipschitz with the claimed constants will
require us to prove a suitable uniform upper bound for
$|(b_{2,1},\ldots,b_{2n,1})|/|b_{1,1}|$ assuming an upper bound
for $\delta'$ (this will show in particular that $b_{1,1}\neq 0$).
Supposing for a moment that this can be done, we will then prove
that  $z:\mathbb{R} \to \mathbb{R}$ is a bi-Lipschitz
homeomorphism  if $\delta'$ is chosen small enough (it is
obviously always Lipschitz continuous). This will finally also
yield that $\Gamma_{\mathcal{T}}$ is indeed the graph of
$\psi_{\mathcal{T}}+L_{\mathcal{T}}$. Before entering the
computations, we recall that $R$ is an orthogonal matrix, so all
its rows and columns have length $1$, and in particular we obtain
that
\begin{equation}\label{eq:column_length}
1\geq \left|(b_{1,2},\cdots,b_{1,2n})\right|.
\end{equation}
Combined with the $\delta'$-Lipschitz continuity of
$\varphi_{\mathcal{T}}$, this yields that
\begin{align}\label{eq:z(x)}
|z(x_1)-z(x_2)|&= \left|b_{1,1}(x_1-x_2)+\sum_{l=2}^{2n}
b_{1,l}(\varphi_{\mathcal{T},l}(x_1)-\varphi_{\mathcal{T},l}(x_2))
\right|\notag \\& \geq \left(|b_{1,1}|-\delta'
\left|(b_{1,2},\ldots,b_{1,2n})\right|\right)|x_1-x_2|\notag\\
&\overset{\eqref{eq:column_length}}{\geq}
\left(|b_{1,1}|-\delta'\right)|x_1-x_2|
\end{align}
for all $x_1,x_2\in \mathbb{R}$. Once again, a universal positive
lower bound on $b_{1,1}$ will conclude the proof if $\delta'$ is
chosen small enough.

\medskip

Motivated by these considerations, we now concentrate our efforts
on proving that
\begin{equation}\label{eq:b_goal}
\delta' \leq \tfrac{1}{100 \sqrt{2n-1}}\quad \Rightarrow \quad
|b_{1,1}|\geq 1/\sqrt{5}\quad \left(\Leftrightarrow\quad
|(b_{2,1},\ldots,b_{2n,1})|/|b_{1,1}| <2.\right)
\end{equation}
To see why such a statement is plausible, it may help the reader
to picture the case $n=1$. Then \eqref{eq:b_goal} essentially says
that if the graph of the $1$-Lipschitz function $\psi$ is
well-approximated by $\Gamma_{\mathcal{T}}$, which is obtained by
rotating the graph of a $\delta'$-Lipschitz function
$\varphi_{\mathcal{T}}$ over the $x_1$-axis by angle $\theta$,
then $|\tan(\theta)|=|b_{2,1}/b_{1,1}|<2$ if $\delta'$ is small
enough.

We will now prove \eqref{eq:b_goal} for $n>1$. To this end, pick
an arbitrary interval $S\in \mathcal{T}$ and denote by $s_1$ and
$s_2$ the endpoints of $S$. Moreover, let $x:=x_{s_1}\in
\mathbb{R}$ be such that  \eqref{eq:DS_approx2} holds for $s=s_1$
and $Q=S$, and in the same way, associate to the other endpoint
$s_2$ a point $x':=x_{s_2}\in \mathbb{R}$ such that
\eqref{eq:DS_approx2} holds for $s=s_2$ and $Q=S$. Using this
property, combined with the $1$-Lipschitz continuity of $\psi$,
the matrix representation \eqref{eq:RMatr} of $R\in O(2n)$, and
the $\delta'$-Lipschitz continuity of $\varphi_{\mathcal{T}}$, we
conclude that
\begin{align*}
\Bigg|
\begin{pmatrix}b_{2,1}\\\vdots\\
b_{2n,1}\end{pmatrix} (x-x')&+
\begin{pmatrix}\langle(b_{2,2},\ldots,b_{2,2n}),\varphi_{\mathcal{T}}(x)
-\varphi_{\mathcal{T}}(x')\rangle\\\vdots\\\langle(b_{2n,2},\ldots,b_{2n,2n}),\varphi_{\mathcal{T}}(x)
-\varphi_{\mathcal{T}}(x')\rangle\end{pmatrix}
\Bigg|\\
& \overset{\eqref{eq:DS_approx2}}{\leq}
|\psi(s_1)-\psi(s_2)|+ 2
\delta' |S|\leq |s_1-s_2| + 2 \delta' |S|\\
&\overset{\eqref{eq:DS_approx2}}{\leq}
\left|b_{1,1}(x-x')+\sum_{l=2}^{2n}b_{1,l}(\varphi_{\mathcal{T},l}(x)-\varphi_{\mathcal{T},l}(x'))\right|
+4\delta'|S|\\
&\leq
|b_{1,1}||x-x'|+|\varphi_{\mathcal{T}}(x)-\varphi_{\mathcal{T}}(x')|+
4\delta'|S|\leq \left(|b_{1,1}|+\delta' \right)|x-x'|+ 4\delta'
|S|.
\end{align*}
This implies that
\begin{align}\label{eq:PageA}
\left|\begin{pmatrix}b_{2,1}\\\vdots\\b_{2n,1}\end{pmatrix}\right|
|x-x'|\leq
&\left(|b_{1,1}|+(\sqrt{2n-1}+1)\delta' \right)|x-x'|+
4\delta' |S|.
\end{align}
The above estimates  hold for arbitrary points in $2S$, but the
fact that $s_1$ and $s_2$ are the endpoints of $S$, allow us to
show that $|S|\lesssim |x-x'|$. More precisely, since
$\varphi_{\mathcal{T}}$ is $\delta'$-Lipschitz, we find
\begin{align*}
|S|=|s_1-s_2|\leq & \left|s_1-b_{1,1}x-\sum_{l=2}^{2n}
b_{1,l}\varphi_{\mathcal{T},l}(x)\right|+
\left|s_2-b_{1,1}x'-\sum_{l=2}^{2n} b_{1,l}
\varphi_{\mathcal{T},l}(x')\right|\\
&+ |b_{1,1}| |x-x'| + \delta' |x-x'|\\
\overset{\eqref{eq:DS_approx2}}{\leq}& 2\delta'|S| +
|b_{1,1}||x-x'| + \delta'|x-x'|.
\end{align*}
Moving the terms with $|S|$ to the left-hand side, we conclude for
small enough $\delta'$ as in
 \eqref{eq:b_goal}  that
\begin{displaymath}
|S|\leq
\left(\frac{|b_{1,1}|+\delta'}{1-2\delta'}\right)|x-x'|\leq
\frac{5}{2}|x-x'|.
 \end{displaymath}
 Inserting this estimate in \eqref{eq:PageA} and dividing both sides by $|x-x'|$, we
 observe that
 \begin{displaymath}
 |(b_{2,1},\ldots,b_{2n,1})|\leq |b_{1,1}| + \delta' \left(11+
 \sqrt{2n-1}\right).
 \end{displaymath}
As the columns of the orthogonal matrix $R$ are unit vectors, we
have
\begin{equation}\label{eq:b_11_bound}
1-b_{1,1}^2 =|(b_{2,1},\ldots,b_{2n,1})|^2\leq b_{1,1}^2 +
2\left(11+\sqrt{2n-1}\right) \delta' +
\left(11+\sqrt{2n-1}\right)^2 {\delta'}^2.
\end{equation}
If $\delta'$ is small enough, say as in \eqref{eq:b_goal}, then we
can deduce from \eqref{eq:b_11_bound} that $b_{1,1}^2 > 1/5$, and
hence
\begin{equation}\label{eq:b_ratio_estimate}
\frac{\left|b_{2,1},\ldots,b_{2n,1}\right|^2}{b_{1,1}^2} =
\frac{1-b_{1,1}^2}{b_{1,1}^2} < 4,
\end{equation}
which concludes the proof of \eqref{eq:b_goal}.

\medskip

It is now immediate from the formula \eqref{eq:Def_L_T} that
$L_{\mathcal{T}}$ is $2$-Lipschitz. Moreover, it follows from the
formula for $\psi_{\mathcal{T}}$ in \eqref{eq:Psi_T}, the
$\delta'$-Lipschitz continuity of $\varphi_{\mathcal{T}}$, the
Cauchy-Schwarz inequality, and \eqref{eq:b_ratio_estimate}
 that
\begin{displaymath}
|\psi_{\mathcal{T}}(z(x_1))-\psi_{\mathcal{T}}(z(x_2))|\leq 3
\sqrt{2n-1}
|\varphi_{\mathcal{T}}(x_1)-\varphi_{\mathcal{T}}(x_2)|\leq
3\sqrt{2n-1}\,\delta'\, |x_1-x_2|
\end{displaymath}
for all $x_1,x_2\in \mathbb{R}$. Since $|b_{1,1}|\geq 1/\sqrt{5}$
and \eqref{eq:z(x)} holds, it is clear that for
\begin{equation}\label{eq:delta'}
\delta' \leq \frac{\delta}{100\sqrt{2n-1}}
\end{equation}
the function $\psi_{\mathcal{T}}$ is $\delta$-Lipschitz, as
required.

\medskip

It remains to verify the approximation condition
\eqref{form15Lipschitz}. This follows easily from
\eqref{eq:DS_approx2}, which can now be rewritten as
\begin{equation}\label{eq:finalEst}
\left|\begin{pmatrix}s\\\psi(s)\end{pmatrix}-\begin{pmatrix}z(x_s)\\
\psi_{\mathcal{T}}(z(x_s))+L_{\mathcal{T}}(z(x_s))\end{pmatrix}\right|\leq
\delta' |Q|,\quad Q\in \mathcal{T},\, s\in 2Q.
\end{equation}
Since $|s-z(x_s)|\leq \delta' |Q|$, and $\psi_{\mathcal{T}} +
L_{\mathcal{T}}$ is $3$-Lipschitz, it follows that
\eqref{eq:finalEst} holds with $z(x_s)$ replaced by $s$, and
$\delta'$ replaced by $\delta$, recalling the bound
\eqref{eq:delta'}. This concludes the proof of  Theorem
\ref{t:DSAdaptation}.
\end{proof}

We make one more modification in the construction of
$\psi_{\mathcal{T}}$, see Corollary \ref{c:DSAdaptBdry} below.
This is an additional step compared to the proof for $n=1$ in
\cite{fssler2019singular}, necessitated by the noncommutativity of
codimension-$1$ vertical subgroups in $\mathbb{H}^n$ for $n>1$.

In the setting of Theorem \ref{t:DSAdaptation}, if $\mathcal{T}
\in \mathcal{F}$ is a tree with top interval $Q(\mathcal{T})$, we
denote by
 $\mathcal{S} (\mathcal{T})$  the (possibly empty) collection of
minimal intervals in $\mathcal{T}$. Moreover, we write
\begin{equation*}
E :=  Q(\mathcal{T})\setminus  \bigcup _{ S \in \mathcal{S}
(\mathcal{T})} S
\end{equation*}
for the set of points in $Q(\mathcal{T})$ in infinite branches of
$\mathcal{T}$. By the approximation condition \eqref{eq:DS_approx}
in the corona decomposition, we have
\begin{equation}\label{eq:equalOnE}
\psi (s) = [{L}_{\mathcal{T}} + \psi_{\mathcal{T}}] (s), \quad
\text{for all }s \in E.
\end{equation}
For the application to intrinsic Lipschitz graphs  in
$\mathbb{H}^n$, $n>1$, in Section \ref{s:CoronaiLG}, it will be
beneficial to have the identity \eqref{eq:equalOnE} also in all
points $s\in \bigcup_{S\in \mathcal{S}(\mathcal{T})}\partial S$.
Otherwise error terms will appear depending on the position of the
intrinsic graph and caused by the non-commutativity of
codimension-$1$ vertical subgroups.

\begin{cor}\label{c:DSAdaptBdry}
For $\mathcal{T}\in \mathcal{F}$, the function $\psi_{\mathcal{T}}$
in Theorem  \ref{t:DSAdaptation} can be constructed so that
\begin{equation}\label{eq:equalOnpartial S}
\psi (s) = [{L}_{\mathcal{T}} + \psi_{\mathcal{T}}] (s), \quad
\text{for all }s \in \bigcup_{S\in\mathcal{S}(\mathcal{T})}
\partial S.
\end{equation}
\end{cor}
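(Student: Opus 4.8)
The plan is to modify the function $\psi_{\mathcal{T}}$ obtained in Theorem \ref{t:DSAdaptation} by a controlled perturbation that forces the desired matching on $\bigcup_{S\in \mathcal{S}(\mathcal{T})}\partial S$ while keeping the Lipschitz constant below $\delta$ (after, if needed, shrinking the auxiliary parameter $\delta'$ in the proof of Theorem \ref{t:DSAdaptation}). The starting observation is that for each endpoint $s\in \partial S$ of a minimal interval $S\in \mathcal{S}(\mathcal{T})$, the approximation inequality \eqref{eq:finalEst} holds with $Q=S$, so that $\psi(s)$ and $[{L}_{\mathcal{T}}+\psi_{\mathcal{T}}](s)$ differ by at most a constant times $\delta' |S|$; denoting this error vector by $e_s:=\psi(s)-[{L}_{\mathcal{T}}+\psi_{\mathcal{T}}](s)\in \mathbb{R}^{2n-1}$, we have $|e_s|\lesssim \delta'|S|$. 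The goal is to add to $\psi_{\mathcal{T}}$ a correction $g\colon \mathbb{R}\to \mathbb{R}^{2n-1}$ with $g(s)=e_s$ for all such $s$, with $g$ vanishing on $E$ (so that \eqref{eq:equalOnE} is preserved), and with $\operatorname{Lip}(g)$ as small as we wish relative to $\delta$.

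The key point is that the endpoints of minimal intervals are well separated at the scale of the error. Recall that distinct minimal intervals $S,S'\in\mathcal{S}(\mathcal{T})$ are either disjoint or nested-with-equality-excluded, hence (being minimal) disjoint; moreover two endpoints $s\in\partial S$, $s'\in \partial S'$ with $s\ne s'$ satisfy $|s-s'|\geq \min\{|S|,|S'|\}/2$ (an endpoint of a dyadic interval is never interior to a smaller disjoint dyadic interval, and two adjacent minimal intervals sharing nothing are comparable or separated). In fact it is cleanest to define $g$ separately on each minimal interval's "Whitney-type" neighbourhood: for each $S\in\mathcal{S}(\mathcal{T})$ with endpoints $s_1<s_2$, let $g$ interpolate linearly on $S$ between the values $e_{s_1}$ at $s_1$ and $e_{s_2}$ at $s_2$, and extend $g$ by $0$ outside $\bigcup_{S}\overline{S}$, and by the linear-interpolation values on each $S$. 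On the interior of $S$ this gives $\operatorname{Lip}(g|_S)\leq |e_{s_1}-e_{s_2}|/|S|\lesssim \delta'$. At a point $s\in\partial S$ where $g$ transitions to $0$ (when the neighbouring interval is not minimal, or across $E$), one uses $|e_s|\lesssim \delta'|S|$ and the fact that on the adjacent side $g$ is either $0$ or interpolates another error of comparable size over a comparable length; a short case analysis over the finitely many local configurations (two adjacent minimal siblings, a minimal interval adjacent to $E$, etc.) yields a global bound $\operatorname{Lip}(g)\leq C_n\,\delta'$ for a dimensional constant. Here it is essential that $g$ is supported on $\bigcup_{S}\overline{S}$, so $g\equiv 0$ on $E$ and hence \eqref{eq:equalOnE} still holds for the new function.

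The main obstacle is controlling $\operatorname{Lip}(g)$ uniformly near the boundary points $\partial S$ where the interpolation meets either the zero function or the interpolation on an adjacent minimal interval: one must verify that the jump in slope is absorbed by the estimate $|e_s|\lesssim \delta'|S|$ together with the separation of endpoints of non-adjacent minimal intervals, and that nested configurations cannot occur among minimal intervals. Once this Lipschitz bound $\operatorname{Lip}(g)\le C_n\delta'$ is in hand, we simply run the proof of Theorem \ref{t:DSAdaptation} with $\delta'$ replaced by $\min\{\delta', \delta/(2C_n\cdot 100\sqrt{2n-1})\}$ and set $\widetilde{\psi}_{\mathcal{T}}:=\psi_{\mathcal{T}}+g$; then $\widetilde{\psi}_{\mathcal{T}}$ is $\delta$-Lipschitz, the approximation \eqref{form15Lipschitz} still holds (with $\delta$ in place of $\delta'$, since $|g|\lesssim \delta'|Q|$ on the relevant intervals, as each point of $2Q$ for $Q\in\mathcal{T}$ with $Q$ meeting $\operatorname{spt} g$ lies near some minimal $S$ with $|S|\lesssim |Q|$), and by construction $\psi(s)=[{L}_{\mathcal{T}}+\widetilde{\psi}_{\mathcal{T}}](s)$ for all $s\in \bigcup_{S\in\mathcal{S}(\mathcal{T})}\partial S$, which is \eqref{eq:equalOnpartial S}. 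Replacing $\psi_{\mathcal{T}}$ by $\widetilde{\psi}_{\mathcal{T}}$ throughout completes the proof.
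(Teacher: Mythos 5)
Your construction is essentially the one the paper uses: on each minimal interval $S$ you add the affine interpolation of the endpoint errors $e_{s_1},e_{s_2}$ (the paper writes this as the constant $e_a$ plus a linear term with slope $c_{S,l}$), you leave the map untouched off the minimal intervals so that \eqref{eq:equalOnE} persists, you bound the added slope by $\lesssim\delta'$ via the approximation property at scale $|S|$, and you rerun Theorem \ref{t:DSAdaptation} with a smaller parameter to absorb the constants. So the approach matches; the problems are in the two verifications you compress.

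First, your justification that \eqref{form15Lipschitz} survives the modification is based on a false claim: it is not true that every point of $2Q$, $Q\in\mathcal{T}$, meeting $\operatorname{spt}g$ lies in (or near) a minimal interval $S$ with $|S|\lesssim|Q|$. A small tree interval $Q$ can abut a much larger minimal interval $S$ (e.g.\ $Q=[\tfrac12-2^{-k-1},\tfrac12]\in\mathcal{T}$ next to a minimal $S=[\tfrac12,1]$), and then $2Q$ contains points of $S$ while $|S|\gg|Q|$, so the bound $|g|\le\delta'|S|$ on $S$ is useless at scale $|Q|$. The estimate you need, $|g(s)|\lesssim\delta'|Q|$ for $s\in 2Q$, is still true, but for a different reason: the endpoint $p$ of $S$ separating $s$ from $Q$ satisfies $p\in 2Q$, so the \emph{original} approximation gives $|e_p|\le\delta'|Q|$, and then the slope bound $\operatorname{Lip}(g|_S)\lesssim\delta'$ together with $|s-p|\le|Q|/2$ finishes; this is in effect the paper's argument, which compares $s$ with a nearby point of $Q\cap\bigl(E\cup\bigcup_{S}\partial S\bigr)$ where exact matching holds. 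Second, the ``short case analysis'' you invoke for $\operatorname{Lip}(g)\le C_n\delta'$ is exactly where the delicate point sits and the dyadic separation estimate you quote does not settle it: at an endpoint $p$ of a large minimal interval that is approached from the other side by much smaller minimal intervals (or by $E$), $g$ jumps from values of size $\lesssim\delta'\cdot(\text{tiny scales})$ to $e_p$, which a priori is only $\lesssim\delta'|S|$; continuity there requires showing $e_p$ is in fact controlled at the fine scales, which follows from coherence of $\mathcal{T}$ (the small tree intervals near $p$ have $p$ in their doubles, so the original approximation forces $e_p$ small), respectively $e_p=0$ when $p$ is a limit of $E$. Without an argument of this type (the paper disposes of it by asserting that each endpoint of a minimal interval either lies in $E$ or is shared with an adjacent minimal interval), the claimed global Lipschitz bound for $g$ is not established as written.
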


\begin{proof}
Fix an arbitrary tree $\mathcal{T}$ in the forest $\mathcal{F}$
associated to the corona decomposition given by Theorem
\ref{t:DSAdaptation} for the $1$-Lipschitz function
$\psi:\mathbb{R} \to \mathbb{R}^{2n-1}$ and parameter
``$\delta/(8\sqrt{2n-1})$''. Assume that
$\mathcal{S}(\mathcal{T})$ is nonempty, otherwise there is nothing
to prove. Let $\psi_{\mathcal{T}}$ and $L_{\mathcal{T}}$ be the
associated functions provided by Theorem \ref{t:DSAdaptation}. We
will now slightly modify $\psi_{\mathcal{T}}$ inside each minimal
interval $S\in \mathcal{S}(\mathcal{T})$ so that
\eqref{eq:equalOnpartial S} holds for the modified function
$\widetilde{\psi}_{\mathcal{T}}$, which will be $\delta$-Lipschitz
and satisfy \eqref{eq:DS_approx} (for $\psi, L_{\mathcal{T}}$ and
``$\delta$''). Set $\widetilde{\psi}_{\mathcal{T}} (s):=
\psi_{\mathcal{T}} (s) = \psi (s)-L_{\mathcal{T}} (s)$ for $s\in
E.$ It is possible to define the modified function
$\psi_{\mathcal{T}}$ on $Q(\mathcal{T})\setminus E$ by considering
one minimal interval $S$ at the time, since the boundary points of
$S$ either belong to $E$, where \eqref{eq:equalOnE} already holds,
or they are boundary points of two adjacent minimal intervals, so
that the modification will be well-defined. Assuming that
$\psi_{\mathcal{T}}$ is defined on the entire real line, we set
$\widetilde{\psi}_{\mathcal{T}}=\psi_{\mathcal{T}}$ outside
$Q(\mathcal{T})$.

\medskip

Fix $S=[a,b] \in \mathcal{S}(\mathcal{T})$. The components of
$\widetilde{\psi}_{\mathcal{T},l}$, $l=2,\ldots,2n$, can be taken
of the form
\begin{displaymath}
(\widetilde{\psi}_{\mathcal{T},l})|_S (s):=
\psi_{\mathcal{T},l}(s) + (\psi_l(a)-L_{\mathcal{T},l}(a)
-\psi_{\mathcal{T},l}(a))+ c_{S,l} (s-a),\quad s\in S,
\end{displaymath}
for suitable constants $c_{S,l}\in \mathbb{R}$ with
\begin{equation}\label{eq:CondOnC}
|(c_{S,2},\ldots,c_{S,2n})|\leq \delta/4. \end{equation} Since we
have merely added an affine function, the modified map
$\widetilde{\psi}_{\mathcal{T}}$ is clearly Lipschitz with
constant $\delta/(8\sqrt{2n-1}) + |(c_{S,2},\ldots,c_{S,2n})|\leq
(3/8)\delta$.

\medskip

Moreover,
$$\widetilde{\psi}_{\mathcal{T},l}(a)=\psi_l(a) -L_{\mathcal{T},l}(a),$$ by
construction, and
\begin{equation}\label{displaymath1.0}
\widetilde{\psi}_{\mathcal{T},l}(s) +L_{\mathcal{T},l}(s)
=\psi_{\mathcal{T},l}(s) +L_{\mathcal{T},l}(s)=\psi_l(s),\quad
\text{for all }s\in E.
\end{equation}
So we only have to show that $c_{S,2},\ldots,c_{S,2n}$ can be
chosen such that the two functions match also at the other
endpoint of $S$, that is
$\widetilde{\psi}_{\mathcal{T},l}(b)=\psi_l(b) -
L_{\mathcal{T},l}(b)$ for $l=2,\ldots,2n$. This forces us to take
\begin{displaymath}
c_{S,l}:= \frac{1}{b-a}\left[\psi_l(b) -L_{\mathcal{T},l}(b) -
\psi_{\mathcal{T},l}(b)
-(\psi_l(a)-L_{\mathcal{T},l}(a)-\psi_{\mathcal{T},l}(a))\right].
\end{displaymath}
Then the approximation condition \eqref{eq:DS_approx} provides the
desired bound for $c_{S,l}$, $l=2,\ldots,2n$, namely
\begin{align*}
|c_{S,l}|&\leq \frac{1}{b-a}|\psi_l(b)-L_{\mathcal{T},l}(b)-
\psi_{\mathcal{T},l}(b)|+
\frac{1}{b-a}|\psi_l(a)-L_{\mathcal{T},l}(a)-\psi_{\mathcal{T},l}(a)|\\
&\leq \frac{\delta |S|}{8\sqrt{2n-1}|S|} + \frac{\delta
|S|}{8\sqrt{2n-1}|S|}= \frac{\delta}{4\sqrt{2n-1}},
\end{align*}
which yields \eqref{eq:CondOnC}. We apply the same procedure for
every minimal interval $S$. This yields the modified function
$\widetilde{\psi}_{\mathcal{T}}:\mathbb{R} \to \mathbb{R}^{2n-1}$,
which is $\frac 3 8 \delta $-Lipschitz. This follows from
\begin{displaymath}
\widetilde{\psi}_{\mathcal{T}}(s)=\psi
(s)-L_{\mathcal{T}}(s)\text{ for } s\in E \cup  \bigcup_{S\in
\mathcal{S}(\mathcal{T})}\partial S \quad \text{and}\quad
\widetilde{\psi}_{\mathcal{T}}(s)=\psi_{\mathcal{T}}(s)\text{ for
}s\in E,
\end{displaymath}
and the $\frac \delta {8 \sqrt{2n-1}}$-Lipschitz  continuity of
$\psi_{\mathcal{T} }$ as well as the  $\frac 3 8 \delta$-Lipschitz
 continuity of $\widetilde{\psi}_{\mathcal{T} _{|_\mathcal{S}}}$ for all $S\in
 \mathcal{S}(\mathcal{T})$.

\medskip

Finally, we verify that $\widetilde{\psi}_{\mathcal{T}}$ has the
desired approximation property. For every $Q\in \mathcal{T}$,
$s\in 2Q$, there exists a point
\begin{displaymath}
s_1 \in Q \cap \Big( E\cup \bigcup_{S\in
\mathcal{S}(\mathcal{T})}\partial S\Big)
\end{displaymath}
with $|s-s_1|\leq |Q|$. This point satisfies by construction
\begin{displaymath}
\psi(s_1)=\widetilde{\psi}_{\mathcal{T}}(s_1)+L_{\mathcal{T}}(s_1).
\end{displaymath}
Then, for $s\in 2Q$ as above, we find that
\begin{align*}
& |\psi(s)-[\widetilde{\psi}_{\mathcal{T}}+L_{\mathcal{T}}](s)|\leq
|\psi(s)-[{\psi}_{\mathcal{T}}+L_{\mathcal{T}}](s)|+|{\psi}_{\mathcal{T}}(s) - \widetilde{\psi}_{\mathcal{T}}(s)|\\
& \leq \frac{\delta |Q|}{8\sqrt{2n-1}} +
|{\psi}_{\mathcal{T}}(s) - \psi_{\mathcal{T}}(s_1)|+ |{\psi}_{\mathcal{T}}(s_1) - \widetilde{\psi}_{\mathcal{T}}(s_1)| + |\widetilde{\psi}_{\mathcal{T}}(s_1) - \widetilde{\psi}_{\mathcal{T}}(s)|\\
& \leq \frac{\delta |Q|}{4\sqrt{2n-1}}+ |{\psi}_{\mathcal{T}}(s_1)
- \widetilde{\psi}_{\mathcal{T}}(s_1)| + \frac{3\delta}{8}|Q|,
\end{align*}
where in the last inequality we used the facts that
$\psi_{\mathcal{T} }$  and $\widetilde{\psi}_{\mathcal{T}
_{|_\mathcal{S}}}$ are $\frac \delta {8 \sqrt{2n-1}}$-Lipschitz
and $\frac 3 8 \delta$-Lipschitz, respectively. Hence it remains
to estimate the term $ |{\psi}_{\mathcal{T}}(s_1) -
\widetilde{\psi}_{\mathcal{T}}(s_1)|.$ There are two cases: $s_1
\in E$ or $s_1 \in \bigcup_{S\in \mathcal{S}(\mathcal{T})}\partial
S.$ In the first case,  $ |{\psi}_{\mathcal{T}}(s_1) -
\widetilde{\psi}_{\mathcal{T}}(s_1)|=0$ because of
\eqref{displaymath1.0}. On the other hand, if $s_1 \in Q \cap
\partial S$ for some $S\in \mathcal{S}(\mathcal{T})$, then
$\widetilde{\psi}_{\mathcal{T}} (s_1)= \psi (s_1) -
L_{\mathcal{T}} (s_1)$ by construction, and so
 \begin{align*}
&  |{\psi}_{\mathcal{T}}(s_1) -
\widetilde{\psi}_{\mathcal{T}}(s_1)| =  |{\psi}_{\mathcal{T}}(s_1)
- (\psi (s_1) - L_{\mathcal{T}} (s_1))| \leq
\frac{\delta}{8\sqrt{2n-1}} |Q|.
\end{align*}
Inserting the bound for $ |{\psi}_{\mathcal{T}}(s_1) -
\widetilde{\psi}_{\mathcal{T}}(s_1)|$ in the previous estimate, we
deduce that
\begin{displaymath}
|\psi(s)-[\widetilde{\psi}_{\mathcal{T}}+L_{\mathcal{T}}](s)|\leq
\frac{\delta|Q|}{4\sqrt{2n-1}}+
\frac{\delta|Q|}{8\sqrt{2n-1}}+\frac{3\delta|Q|}{8}\leq \delta
|Q|,\quad Q\in \mathcal{T},s\in 2Q,
\end{displaymath}
which shows that the approximation condition
\eqref{form15Lipschitz} holds for
$\widetilde{\psi}_{\mathcal{T}}$, and thus concludes the proof of
Corollary \ref{c:DSAdaptBdry}.
\end{proof}

\subsection{Corona decomposition for $1$-dimensional intrinsic
Lipschitz graphs}\label{s:CoronaiLG}

With Corollary \ref{c:DSAdaptBdry} at hand, we are now ready to
establish the corona decomposition for $1$-di{\-}men{\-}sio{\-}nal
intrinsic Lipschitz graphs.  Given such a graph  in
$\mathbb{H}^n$, the idea is to apply Corollary \ref{c:DSAdaptBdry}
to the $1$-dimensional Euclidean Lipschitz graph in
$\mathbb{R}^{2n}$ which is obtained by projecting the intrinsic
graph  to the horizontal coordinate plane $\{t=0\}$. We obtain a
corona decomposition of this graph with approximating Lipschitz
functions
$$\psi_{\mathcal{T}}+L_{\mathcal{T}}:\mathbb{R}\to\mathbb{R}^{2n-1}.$$
While it is straightforward to lift these approximating functions
to intrinsic Lipschitz functions, these lifts may not approximate
the initially given iLG well enough in the $t$-variable.
Analogously to the approach in \cite{fssler2019singular}, the
following theorem is therefore based on a modification of
$\psi_{\mathcal{T}}$ which will ensure that the lift of
$\psi_{\mathcal{T}}+L_{\mathcal{T}}$ has the desired approximation
properties.

\begin{thm}[Corona decomposition for
intrinsic $1$-Lipschitz maps] \label{Theorem 3.15n>1} Let $n>1$,
and assume that $\mathbb{V}$ is a $1$-dimensional horizontal
subgroup in $\mathbb{H}^n$ with complementary vertical subgroup
$\mathbb{W}$. For every $\eta \in (0,1)$, there exists a constant
$C\geq 1$ such that the following holds. Let $$\phi =(\phi_2 ,
\dots, \phi _{2n+1}): \V \to \W$$ be intrinsic 1-Lipschitz. Then,
there exists a coronization $\mathcal{D} = \mathcal{G} \dot{\cup}
\mathcal{B}$ satisfying the conditions in Definition
\ref{d:coronization} with constant $C$ such that,
 for every $\mathcal{T} \in \mathcal{F}$, there is an
  intrinsic Lipschitz map  $\phi_\mathcal{T} =(L_{\mathcal{T}} + \psi_{\mathcal{T}}, \phi_{\mathcal{T}, 2n+1}):\V \to \W$
  where  $L_\mathcal{T}: \R \to \R^{2n-1}$ is a linear 2-Lipschitz map and $\psi _\mathcal{T}: \R \to \R^{2n-1}$
   is   a $\eta$-Lipschitz map such that $\phi_\mathcal{T}$ approximates $\phi$ well at the resolution of the intervals in $\mathcal{T}$:
\begin{equation}\label{FO3.19}
d(\phi (s), \phi_\mathcal{T} (s))\leq \eta |Q|, \quad s \in 2Q, \, Q \in \mathcal{T}.
\end{equation}
\end{thm}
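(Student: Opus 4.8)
The plan is to transport the Euclidean corona decomposition of Corollary \ref{c:DSAdaptBdry} across the dictionary between intrinsic Lipschitz maps and tame maps, and then to lift the approximating Euclidean graphs back to intrinsic Lipschitz graphs by prescribing their last component through the infinitesimal condition of Proposition \ref{propositionk=1}. First I would identify $\mathbb{V}$ with $\mathbb{R}$ and $\mathbb{W}$ with $\mathbb{R}^{2n}$ and observe, using Proposition \ref{p:FromIntrLipToTameGENERAL} (with $k=1$, $L=1$) and Lemma \ref{l:IntrLipExplicit}, that $(\phi_2,\ldots,\phi_{2n},-\phi_{2n+1})$ is $(1,\ldots,1,2)$-tame and, since $\|(x,t)\|=\max\{|x|,\sqrt{|t|}\}$, that $\psi:=(\phi_2,\ldots,\phi_{2n}):\mathbb{R}\to\mathbb{R}^{2n-1}$ is $1$-Lipschitz. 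Applying Corollary \ref{c:DSAdaptBdry} to $\psi$ with a parameter $\delta=\delta(\eta,n)\in(0,1)$ to be fixed at the end produces a coronization $\mathcal{D} = \mathcal{G} \dot{\cup} \mathcal{B}$, with constant depending on $\delta$ and $n$, and, for each tree $\mathcal{T}$ in the associated forest, a linear $2$-Lipschitz map $L_{\mathcal{T}}:\mathbb{R}\to\mathbb{R}^{2n-1}$ and a $\delta$-Lipschitz map $\psi_{\mathcal{T}}:\mathbb{R}\to\mathbb{R}^{2n-1}$ with $|\psi(s)-(L_{\mathcal{T}}+\psi_{\mathcal{T}})(s)|\leq\delta|Q|$ for $s\in 2Q$, $Q\in\mathcal{T}$, and with $\psi=L_{\mathcal{T}}+\psi_{\mathcal{T}}$ on the good set $G_{\mathcal{T}}:=E\cup\bigcup_{S\in\mathcal{S}(\mathcal{T})}\partial S$. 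This is exactly the coronization claimed in the theorem, so it remains to construct the maps $\phi_{\mathcal{T}}$ and to verify \eqref{FO3.19}.

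To lift, set $\chi_{\mathcal{T}}:=L_{\mathcal{T}}+\psi_{\mathcal{T}}=(\chi_{\mathcal{T},2},\ldots,\chi_{\mathcal{T},2n})$, whose components are $(2+\delta)$-Lipschitz. By Proposition \ref{propositionk=1}(2) it suffices to prescribe $\phi_{\mathcal{T},2n+1}$, up to an additive constant, by requiring that $(\chi_{\mathcal{T}},-\phi_{\mathcal{T},2n+1})$ satisfy the infinitesimal condition of Proposition \ref{propositionk=1}, cf.\ the identity \eqref{derivata2n+1ora} with $\chi_{\mathcal{T}}$ in place of $\phi$; then $(\chi_{\mathcal{T}},-\phi_{\mathcal{T},2n+1})$ is tame with constants controlled by $n$ and $\delta$, and hence $\phi_{\mathcal{T}}:=(L_{\mathcal{T}}+\psi_{\mathcal{T}},\phi_{\mathcal{T},2n+1})$ is intrinsic Lipschitz by Proposition \ref{p:FromTameToIntrLip}. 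The last component is determined only up to a constant; following the approach of \cite[Section 3]{fssler2019singular} for $n=1$, I would fix this constant, and if necessary also modify $\psi_{\mathcal{T}}$ inside each minimal interval $S\in\mathcal{S}(\mathcal{T})$ by a function that vanishes at $\partial S$ and enlarges the Lipschitz constant by at most a multiple of $\delta$, so that $\phi_{\mathcal{T}}$ agrees with $\phi$ on the whole good set $G_{\mathcal{T}}$ and not merely at a single base point; the boundary-matching property of Corollary \ref{c:DSAdaptBdry} is precisely what makes this possible, and it is needed because the primitive above only tracks $\phi_{2n+1}$ up to errors that would otherwise accumulate from scale to scale.

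For the approximation \eqref{FO3.19}, fix $Q\in\mathcal{T}$ and $s\in 2Q$. As in the proof of Corollary \ref{c:DSAdaptBdry}, there is $s_1\in Q\cap G_{\mathcal{T}}$ with $|s-s_1|\leq|Q|$; by the construction above one has $\phi(s_1)=\phi_{\mathcal{T}}(s_1)$, so the errors $e_j(s_1):=\phi_j(s_1)-\chi_{\mathcal{T},j}(s_1)$ vanish for $j=2,\ldots,2n$. Computing $\phi_{\mathcal{T}}(s)^{-1}\cdot\phi(s)$ in $\mathbb{H}^n$, its horizontal part is $(0,e_2(s),\ldots,e_{2n}(s))$, with $|e_j(s)|\leq\delta|Q|$ by the corona estimate, and its last coordinate equals
\begin{displaymath}
\phi_{2n+1}(s)-\phi_{\mathcal{T},2n+1}(s)+\tfrac{1}{2}\sum_{i=2}^{n}\big(e_i(s)\,\phi_{n+i}(s)-\phi_i(s)\,e_{n+i}(s)\big),
\end{displaymath}
the last sum being the genuinely new term produced by the non-commutativity of the codimension-$1$ vertical subgroup $\mathbb{W}$ for $n>1$. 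Using $e_j(s_1)=0$, the antisymmetric rewriting \eqref{eq:reformulation}, integration of the identity \eqref{derivata2n+1ora} along the segment between $s_1$ and $s$, which lies in $2Q$, and the bounds $|e_j|\leq\delta|Q|$ on $2Q$ together with the Lipschitz bounds on the $\phi_j$ and the $\chi_{\mathcal{T},j}$, this last coordinate should be bounded in absolute value by $C(n)\,\delta\,|Q|^2$; hence $d(\phi(s),\phi_{\mathcal{T}}(s))\leq C(n)\sqrt{\delta}\,|Q|$, and taking $\delta:=\eta^2/C(n)^2$ gives \eqref{FO3.19}. The Carleson packing bounds are inherited unchanged from Corollary \ref{c:DSAdaptBdry}.

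The step I expect to be the main obstacle is this last estimate of the $t$-coordinate of $\phi_{\mathcal{T}}(s)^{-1}\cdot\phi(s)$: the bilinear correction terms displayed above have no counterpart in $\mathbb{H}^1$, and keeping them of size $\delta|Q|^2$, rather than merely of order $\delta|Q|\cdot|Q(\mathcal{T})|$, is exactly what forces both the refinement in Corollary \ref{c:DSAdaptBdry} (matching $\psi$ at the boundaries of the minimal intervals) and the requirement that $\phi_{\mathcal{T}}$ coincide with $\phi$ on all of $G_{\mathcal{T}}$, so that the relevant errors are supported on a segment of length at most $|Q|$.
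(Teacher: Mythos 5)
Your overall architecture is the same as the paper's: run Corollary \ref{c:DSAdaptBdry} on $(\phi_2,\ldots,\phi_{2n})$, define $\phi_{\mathcal{T},2n+1}$ as a primitive via the infinitesimal identity \eqref{derivata2n+1ora}, insert a small correction inside each minimal interval so that $\phi_{\mathcal{T}}$ agrees with $\phi$ on $E\cup\bigcup_{S\in\mathcal{S}(\mathcal{T})}\partial S$, and then estimate the vertical error between $s$ and a nearby matching point $s_1$. The gap lies exactly in the two quantitative claims you leave as assertions, and as you state them they do not close. First, the size of the correction: the slope $c$ forced by the integral constraint (the analogue of \eqref{FO3.27}; note that the correction should be placed in the $(n+1)$-st component, which enters \eqref{derivata2n+1ora} without a derivative, so that the constraint is linear in it) involves the integrand $\phi_{\mathcal{T},i}\dot\phi_{n+i}-\phi_{\mathcal{T},n+i}\dot\phi_{i}-\phi_{i}\dot\phi_{\mathcal{T},n+i}+\phi_{n+i}\dot\phi_{\mathcal{T},i}$, which is \emph{not} pointwise $O(\delta)$, since $\phi_i,\phi_{n+i},\phi_{\mathcal{T},i},\phi_{\mathcal{T},n+i}$ are unbounded Lipschitz functions. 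The bound $|c|\lesssim n\delta$ is obtained only after showing that this contribution (the term $I_4$ in \eqref{boundc21}) vanishes identically, by integration by parts together with the endpoint matching $\phi_i=\phi_{\mathcal{T},i}$ on $\partial S$ from Corollary \ref{c:DSAdaptBdry}; invoking the boundary matching as ``what makes this possible'' names the right hypothesis but not the mechanism.

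Second, and more seriously, the final estimate: the bilinear term in your displayed $t$-coordinate, $\tfrac12\sum_i\bigl(e_i(s)\phi_{n+i}(s)-\phi_i(s)e_{n+i}(s)\bigr)$ with $e_j=\phi_j-\phi_{\mathcal{T},j}$, is only of size $\delta|Q|\cdot\sup_{2Q}(|\phi_i|+|\phi_{n+i}|)$ if you use the ingredients you list ($|e_j|\le\delta|Q|$ plus Lipschitz bounds); this is not $C(n)\delta|Q|^2$, because the values $\phi_i(s),\phi_{n+i}(s)$ can be arbitrarily large. Likewise, integrating \eqref{derivata2n+1ora} for $\phi$ and for $\phi_{\mathcal{T}}$ between $s_1$ and $s$ leaves terms of the form $\int_{s_1}^{s}(\dot\phi_{n+i}-\dot\phi_{\mathcal{T},n+i})\,\phi_{\mathcal{T},i}$, which are not pointwise small either. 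The missing idea is an exact algebraic cancellation: integrating these terms by parts turns them into boundary contributions, those at $s$ cancel identically against the bilinear term in the metric, and those at $s_1$ vanish because \emph{all} horizontal components of $\phi$ and $\phi_{\mathcal{T}}$ coincide at $s_1$. This is the computation in the paper where $J_4$ is combined with the second summand of \eqref{eq:EstWithI} and shown to reduce to $\tfrac12\sum_i\bigl(-\phi_{\mathcal{T},i}\phi_{n+i}+\phi_{\mathcal{T},n+i}\phi_i\bigr)(\tilde s)=0$; it is this cancellation, not the pointwise bounds, that yields $A\lesssim n\delta|Q|^2$ and hence \eqref{FO3.19} after choosing $\delta\sim\eta^2/n$. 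Since you yourself flagged this step as the expected obstacle, be aware that it is precisely where the proof requires a new argument rather than bookkeeping.
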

In \eqref{FO3.19}, $d$ refers to left invariant metric on $\He^n$
as defined in \eqref{eq:metric}:
\begin{equation*}
\begin{aligned}
d(\phi (s), \phi_\mathcal{T} (s)) &  = \max \Big\{ | (\phi_2,\dots, \phi_{2n}) (s) - [ L_\mathcal{T} + \psi_\mathcal{T}] (s)|, \\
& \quad  |\phi _{2n+1} (s)- \phi _{\mathcal{T},2n+1} (s) + \frac
12 \sum_{i=2}^n  - \phi_{\mathcal{T},i} (s) \phi_{n+i}(s) +
\phi_{i} (s) \phi_{\mathcal{T},n+i}(s)  |^{1/2}\Big\}.
\end{aligned}
\end{equation*}
If $\Phi$ and
$\Phi_{\mathcal{T}}$ denote the intrinsic graph maps of $\phi$ and
$\phi_{\mathcal{T}}$, respectively, then $d(\phi (s),
\phi_\mathcal{T} (s))= d(\Phi(s),\Phi_{\mathcal{T}}(s))$, so that
\eqref{FO3.19} really gives an estimate on how well the intrinsic
graph of $\phi_{\mathcal{T}}$ approximates the intrinsic graph of
$\phi$.

Theorem  \ref{Theorem 3.15n>1} looks a little different from the
corona decomposition stated in Theorem \ref{t:coronaIntro} in the
introduction. The reason is simply that the graph of $\phi$ and
all the approximating intrinsic graphs in Theorem  \ref{Theorem
3.15n>1} are written as intrinsic graphs of functions
 over the same horizontal subgroup, say they
are all graphs over the $x_1$-axis. Consequently,
$\phi_{\mathcal{T}}$ itself need not have small intrinsic
Lipschitz constant, but its first components have small Lipschitz
constants, up to subtracting the linear term $L_{\mathcal{T}}$.
The following lemma can be used to deduce Theorem
\ref{t:coronaIntro} from Theorem \ref{Theorem 3.15n>1} applied
with constant ``$\eta^2/c_n^2$''.

\begin{lemma}
Let $n\in \mathbb{N}$, and consider the horizontal subgroup
$\mathbb{V}=\{(x_1,0,\ldots,0):\; x_1 \in \mathbb{R}\}$ with
complementary vertical subgroup $\mathbb{W}$. There exists a
constant $1\leq c_n<\infty$, such that the following holds for
every $\eta \in (0,1)$. If
$\phi=(\phi_2,\ldots,\phi_{2n+1}):\mathbb{V}\to \mathbb{W}$ is an
intrinsic Lipschitz function with the property that
\begin{displaymath}
\mathbb{R} \to \mathbb{R}^{2n-1},\quad x\mapsto
(\phi_2,\ldots,\phi_{2n})(x,0,\ldots,0)= \psi(x) + L(x)
\end{displaymath}
is the sum of an $\eta$-Lipschitz map $\psi:\mathbb{R} \to
\mathbb{R}^{2n-1}$ and a linear map $L:\mathbb{R}\to
\mathbb{R}^{2n-1}$, then $\{v\cdot \phi(v):\,v\in\mathbb{V}\}$ is
an intrinsic $c_n \sqrt{\eta}$-Lipschitz graph over the horizontal
subgroup
\begin{displaymath}
\mathbb{V}_L:= \{ (x,L(x),0)\in
\mathbb{R}\times\mathbb{R}^{2n-1}\times \mathbb{R}:\, x\in
\mathbb{R}\}.
\end{displaymath}
\end{lemma}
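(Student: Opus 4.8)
The plan is to move $\mathbb{V}_L$ to the standard horizontal subgroup $\mathbb{V}$ by an explicit isometric group automorphism and then read off the conclusion from the tame-map results of Sections \ref{ss:TameVSIntrLip}--\ref{ss:Infinitesimal}. Write $\ell := L(1)$, so that $\mathbb{V}_L$ is the line spanned by the unit vector $u_0 := (e_1 + \overline{\ell})/\sqrt{1+|\ell|^2}$, $\overline{\ell} := (0,\ell) \in \mathbb{R}^{2n}$ (every line in $\mathbb{R}^{2n}$ is isotropic, hence $\mathbb{V}_L$ is horizontal), with complementary vertical subgroup $\mathbb{W}_L$. Since $U(n) = \operatorname{Sp}(2n,\mathbb{R}) \cap O(2n)$ acts transitively on the unit sphere of $\mathbb{R}^{2n} = \mathbb{C}^n$, one may fix $R \in U(n)$ with $R u_0 = e_1$; then $f(p) := (R p_h, p_t)$, where $p = (p_h,p_t) \in \mathbb{R}^{2n} \times \mathbb{R}$, is an isometric automorphism of $(\mathbb{H}^n,d)$ carrying $\mathbb{V}_L$ onto $\mathbb{V}$, $\mathbb{W}_L$ onto $\mathbb{W}$, and intrinsic $c$-Lipschitz graphs over subsets of $\mathbb{V}_L$ to intrinsic $c$-Lipschitz graphs over subsets of $\mathbb{V}$ (and conversely). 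Hence it suffices to prove that $f(\Gamma)$, where $\Gamma := \{v \cdot \phi(v) : v \in \mathbb{V}\}$, is an intrinsic $c_n\sqrt{\eta}$-Lipschitz graph over $\mathbb{V}$; here, identifying $\mathbb{V} \cong \mathbb{R}$, the graph map of $\phi$ is $\Phi(y) = (y,\, \psi(y) + y\ell,\, \phi_{2n+1}(y) + \tfrac12 y\phi_{n+1}(y))$, so that $\Phi(y)_h = y\sqrt{1+|\ell|^2}\, u_0 + (0,\psi(y))$.

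Consequently the first coordinate of $f(\Phi(y))$ equals $G(y) := y\sqrt{1+|\ell|^2} + \langle (0,\psi(y)), u_0 \rangle$. Because $(0,\psi(y)-\psi(y'))$ is orthogonal to $e_1$, one has $|\langle (0,\psi(y)-\psi(y')), u_0\rangle| \le \tfrac{|\ell|}{\sqrt{1+|\ell|^2}}\,\eta\,|y-y'|$, and together with $\eta < 1$ and $|\ell| \le \tfrac12(1+|\ell|^2)$ this shows that $G$ is a strictly increasing bi-Lipschitz bijection of $\mathbb{R}$ with $\tfrac12 \le G' \le 2\sqrt{1+|\ell|^2}$ a.e.; in particular $G^{-1}$ is $2$-Lipschitz. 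Thus $f(\Gamma)$ is parametrized by $X(t) := f(\Phi(G^{-1}(t)))$, whose first coordinate equals $t$, so $f(\Gamma)$ is the intrinsic graph over $\mathbb{V}$ of the map $\chi := \pi_{\mathbb{W}} \circ X : \mathbb{R} \to \mathbb{W}$. By Remark \ref{r:iLGvsLip}, $\Phi$ is Lipschitz as a map $(\mathbb{R},|\cdot|) \to (\mathbb{H}^n,d)$; hence so is $X = (f\circ\Phi)\circ G^{-1}$, and since $X$ has intrinsic-graph form over $\mathbb{V}$ the converse part of Remark \ref{r:iLGvsLip} gives that $\chi$ is intrinsic Lipschitz, with some finite constant.

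Next comes the only quantitative input: for $i = 2,\ldots,2n$ one computes $\chi_i(G(y)) = [R(0,\psi(y))]_i$, whence $|(\chi_2,\ldots,\chi_{2n})(G(y)) - (\chi_2,\ldots,\chi_{2n})(G(y'))| \le |\psi(y)-\psi(y')| \le \eta|y-y'| \le 2\eta\,|G(y)-G(y')|$, so each $\chi_i$, $i = 2,\ldots,2n$, is $2\eta$-Lipschitz. Since $\chi$ is intrinsic Lipschitz, Proposition \ref{p:FromIntrLipToTameGENERAL} shows that $(\chi_2,\ldots,\chi_{2n},-\chi_{2n+1})$ is a tame map on the open set $\mathbb{R}$; in particular $\chi_{2n+1}$ is locally Lipschitz, and Proposition \ref{propositionk=1}(1) yields the gradient identity $\dot\chi_{2n+1} = -\chi_{n+1} - \tfrac12\sum_{i=2}^n(\dot\chi_i\chi_{n+i} - \chi_i\dot\chi_{n+i})$ a.e. This is exactly the hypothesis of Proposition \ref{propositionk=1}(2) for $(\chi_2,\ldots,\chi_{2n},-\chi_{2n+1})$ with $L_i = 2\eta$; hence that tuple is tame with last constant $L_{2n+1}' = 2\bigl(2\eta + \sum_{i=2}^n(2\eta)^2\bigr) \le 8n\eta$ (using $\eta < 1$). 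By Proposition \ref{p:FromTameToIntrLip}, $\chi$ is intrinsic $L$-Lipschitz with $L = \max\{2\sqrt{2n-1}\,\eta,\ \sqrt{8n\eta}\} \le c_n\sqrt{\eta}$, $c_n := \max\{2\sqrt{2n-1},\sqrt{8n}\}$ (using $\eta \le \sqrt{\eta}$). Transporting back by $f^{-1}$ proves the lemma.

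The main point is to realize that the gradient relation needed to upgrade ``$\chi$ has first components of Lipschitz constant $O(\eta)$'' to ``$\chi$ is intrinsic $O(\sqrt{\eta})$-Lipschitz'' is obtained \emph{for free} once $\chi$ is known to be intrinsic Lipschitz at all (via Propositions \ref{p:FromIntrLipToTameGENERAL} and \ref{propositionk=1}(1)), so that the argument splits into a soft part and a trivial quantitative part. The delicate spots are: showing that $G$ is bi-Lipschitz with constants independent of $\ell$ and of $\eta \in (0,1)$, which needs the sharp estimate on $\langle (0,\psi(y)-\psi(y')),u_0\rangle$ via orthogonality to $e_1$ rather than a crude bound; observing that it is precisely the fact that $f$ is a group \emph{automorphism} (intertwining the splittings $\mathbb{V}_L\cdot\mathbb{W}_L$ and $\mathbb{V}\cdot\mathbb{W}$) that makes $X$ of intrinsic-graph form over $\mathbb{V}$; and the routine passage of ``almost everywhere'' through the bi-Lipschitz reparametrization $G$.
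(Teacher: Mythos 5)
Your proof is correct and follows essentially the same route as the paper: your reparametrizing function $G$ is exactly the paper's $z(x)=\langle (x,[\psi+L](x)),v_1\rangle$ with the same orthogonality trick for the bi-Lipschitz bound, the soft intrinsic Lipschitz property is obtained from Remark \ref{r:iLGvsLip} in both arguments, and your explicit $U(n)$-rotation together with the chain Proposition \ref{p:FromIntrLipToTameGENERAL} $\to$ Proposition \ref{propositionk=1} $\to$ Proposition \ref{p:FromTameToIntrLip} simply makes explicit the final self-improvement step that the paper compresses into ``the arguments in Section \ref{s:defns}''. One cosmetic caveat: the lemma allows $n=1$ while Proposition \ref{propositionk=1} assumes $n>1$, so in that case you should invoke Proposition \ref{l:EquivTameDiffOpenInt} with $k=n=1$ (the sums being empty) instead; nothing else in your argument changes.
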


\begin{proof} We write $\mathbb{V}_L$ as the span of a unit vector
$v_1:=(b_{1,1},\ldots,b_{1,2n})$ in the horizontal plane
$\{t=0\}$. Now we can use similar arguments as in the proof of
Theorem \ref{t:DSAdaptation}. For arbitrary points
$(x,\psi(x)+L(x))$ and $(x',\psi(x')+L(x'))$, we compute
\begin{align*}
|\langle (x-x',&[\psi+L](x)-[\psi+L](x')) ,v_1\rangle |\\&\geq
|(x-x',L(x-x'))|-
|\psi(x)-\psi(x')|\,|(b_{2,1},\ldots,b_{2n,1})|\\
&\geq |x-x'|\left(\sqrt{1+|(b_{2,1},\ldots,b_{2n,1})|^2}-
|(b_{2,1},\ldots,b_{2n,1})|\right)\\
&\geq (\sqrt{2}-1)|x-x'|
\end{align*}
Here, the first inequality can be deduced by triangle inequality
and the fact that the vector $(x-x',L(x-x'))$ is parallel to
$v_1$. The remaining inequalities use that
$v_1=(b_{1,1},\ldots,b_{2n,1})$ is a unit vector, and $\psi$ is
$\eta$-Lipschitz for some $\eta\in (0,1)$. Denoting
\begin{displaymath}
z(x):= \langle (x,[\psi+L](x)) ,v_1\rangle,
\end{displaymath}
the previous computations show
\begin{equation}\label{eq:z_biLip}
|z(x)-z(x')|\geq (\sqrt{2}-1)|x-x'|,
\end{equation}
and $z:\mathbb{R}\to \mathbb{R}$ is a homeomorphism. Now we
complete $v_1$ to an orthonormal basis $\{v_1,\ldots,v_{2n}\}$ of
$\mathbb{R}^{2n}$, and we define a map
$
\varphi: \mathrm{span}(v_1) \to \mathrm{span}\{v_2,\ldots,v_{2n}\}
$
by setting
\begin{displaymath}
\varphi(z(x) v_1 ) = \sum_{j=2}^{2n} \left\langle
\begin{pmatrix}x \\ [\psi+L](x) \end{pmatrix} ,v_j\right \rangle v_j
\end{displaymath}
so that the graph of $\varphi$ (over $\mathbb{V}_L$) as a set in
$\mathbb{R}^{2n}$ coincides with graph of $\psi+L$ (over
$\mathbb{V}$). Then there exists a constant $\lambda_n$, depending
only on $n$, such that
\begin{align*}
|\varphi(z(x)v_1)-\varphi(z(x')v_1)|\leq \lambda_n
|\psi(x)-\psi(x')|\leq \lambda_n \eta |x-x'|\leq
\frac{\lambda_n\,\eta }{\sqrt{2}-1} |z(x)-z(x')|.
\end{align*}
This shows that the projection of the intrinsic graph $\Gamma$ of
$\phi$ to the horizontal plane $\{t=0\}$ is the graph of the
Euclidean Lipschitz function $\varphi$ over $\mathrm{span}(v_1)$.
Then it is easy to see that there exists a unique real-valued
function $\varphi_{2n+1}$ so that $\Gamma$ is the intrinsic graph
of $(\varphi,\varphi_{2n+1})$ over $\mathbb{V}_L$, and the graph
map of $\phi$ at $x$ equals the graph map of
$(\varphi,\varphi_{2n+1})$ at $z(x)v_1$. It remains to show that
$(\varphi,\varphi_{2n+1})$ is intrinsic $c_n\sqrt{\eta}$-Lipschitz
for a suitable constant $c_n$. It is easy to deduce from the
intrinsic Lipschitz property of $\phi$, Remark \ref{r:iLGvsLip}
applied to $\phi$, and \eqref{eq:z_biLip} that the graph map of
$(\varphi,\varphi_{2n+1})$ is a Lipschitz function with respect to
the Heisenberg metric. Applying again Remark \ref{r:iLGvsLip}, but
now to $(\varphi,\varphi_{2n+1})$, we conclude that this function
is intrinsic Lipschitz. Finally, it follows from the Euclidean
$(\lambda_n\,\eta)/(\sqrt{2}-1)$-Lipschitz continuity and the
arguments in Section \ref{s:defns} that the intrinsic Lipschitz
constant of $(\varphi,\varphi_{2n+1})$ can be taken to be $c_n
\sqrt{\eta}$.
\end{proof}

\medskip

Before proving Theorem \ref{Theorem 3.15n>1}, we give another
version for intrinsic $N$-Lipschitz maps
$\phi=(\phi_2,\ldots,\phi_{2n+1})$ with $N \geq 1$, similarly as
in \cite{fssler2019singular}. We can consider
\begin{displaymath}
\hat \phi = (\hat \phi_{2}, \dots, \hat \phi_{2n+1}
)=\left(\tfrac{1}{N}\phi_2,\ldots,\tfrac{1}{N}
\phi_n,\tfrac{1}{N^2}\phi_{n+1},\tfrac{1}{N}\phi_{n+2},\ldots,\tfrac{1}{N}\phi_{2n},\tfrac{1}{N^2}\phi_{2n+1}\right),
\end{displaymath} which is an intrinsic $1$-Lipschitz map. Hence, we apply Theorem
\ref{Theorem 3.15n>1} to $\hat \phi$ and constant $\eta$. This
yields a coronization with Carleson packing constants independent
of $N$, and for every associated tree $\mathcal{T}$ an
approximating map
$\hat\phi_{\mathcal{T}}=(\hat\psi_{\mathcal{T}}+\hat{L}_{\mathcal{T}},\hat\phi_{\mathcal{T},2n+1})$
as stated in Theorem \ref{Theorem 3.15n>1}. Then
\begin{displaymath}
 \phi_{\mathcal{T}}:
=\left(N\hat\phi_{\mathcal{T},2},\ldots,N\hat
\phi_{\mathcal{T},n},N^2\hat\phi_{\mathcal{T},n+1},N\hat\phi_{\mathcal{T},n+2},\ldots,N\hat\phi_{\mathcal{T},2n},N^2\hat\phi_{\mathcal{T},2n+1}\right)
\end{displaymath}
is intrinsic Lipschitz and its projection to the horizontal plane
$\{t=0\}$ is the sum of a $\eta N^2$-Lipschitz map
$\psi_{\mathcal{T}}$ and a linear $2 N^2$-Lipschitz map
$L_{\mathcal{T}}$ with the properties stated in the following
corollary. The appearance of the constant $N^2$ (instead of $N$)
is related to the fact that intrinsic $N$-Lipschitz maps
correspond essentially to $(N,\ldots,N,N^2)$-tame maps by
Propositions \ref{p:FromIntrLipToTameGENERAL} and
\ref{p:FromTameToIntrLip}.

\begin{cor}[Corona decomposition for
intrinsic $N$-Lipschitz maps]\label{Theorem 3.15+} For every $n\in
\mathbb{N}$, $n>1$, and $\eta \in (0,1)$, there exists a constant
$C\geq 1$ such that the following holds. Let  $N\geq 1$ be
arbitrary and let $\phi =(\phi_2 , \dots, \phi _{2n+1}): \V \to
\W$ be intrinsic $N$-Lipschitz. Then, there exists a coronization
$\mathcal{D} = \mathcal{G} \dot{\cup} \mathcal{B}$ satisfying the
conditions in Definition \ref{d:coronization} with constant $C$
such that,
 for every $\mathcal{T} \in \mathcal{F}$, there is an
  intrinsic Lipschitz map  $\phi_\mathcal{T} =(L_{\mathcal{T}} + \psi_{\mathcal{T}}, \phi_{\mathcal{T}, 2n+1}):\V \to \W$
  where  $L_\mathcal{T}: \R \to \R^{2n-1}$ is a linear $2N^2$-Lipschitz map and $\psi _\mathcal{T}: \R \to \R^{2n-1}$
   is   a $\eta N^2$-Lipschitz map such that $\phi_\mathcal{T}$ approximates $\phi$ well at the resolution of the intervals in $\mathcal{T}$:
\begin{equation*}
d(\phi (s), \phi_\mathcal{T} (s))\leq (\eta N^2) |Q|, \quad s \in 2Q, \, Q \in \mathcal{T}.
\end{equation*}
\end{cor}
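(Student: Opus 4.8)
The plan is to deduce the corollary from Theorem \ref{Theorem 3.15n>1} by means of the anisotropic rescaling described in the paragraph preceding the statement, together with a bookkeeping check of how the relevant quantities behave under this rescaling. First I would record that, for $k=1$, Lemma \ref{l:IntrLipExplicit} characterizes an intrinsic $L$-Lipschitz map $\phi=(\phi_2,\ldots,\phi_{2n+1})$ by the two conditions that $\big|(\phi_2(v')-\phi_2(v),\ldots,\phi_{2n}(v')-\phi_{2n}(v))\big|\leq L|v'-v|$ and that $|H(v,v')|\leq L^2|v'-v|^2$, where
\begin{displaymath}
H(v,v')=\phi_{2n+1}(v')-\phi_{2n+1}(v)+\phi_{n+1}(v)(v'-v)+\tfrac12\sum_{i=2}^{n}\phi_i(v')\phi_{n+i}(v)-\phi_i(v)\phi_{n+i}(v').
\end{displaymath}
Applying this to
\begin{displaymath}
\hat\phi=\left(\tfrac1N\phi_2,\ldots,\tfrac1N\phi_n,\tfrac1{N^2}\phi_{n+1},\tfrac1N\phi_{n+2},\ldots,\tfrac1N\phi_{2n},\tfrac1{N^2}\phi_{2n+1}\right),
\end{displaymath}
one checks that, since $N\geq 1$, the horizontal increment vector of $\hat\phi$ has Euclidean norm at most $\tfrac1N$ times that of $\phi$, while $H_{\hat\phi}(v,v')=\tfrac1{N^2}H_\phi(v,v')$: in the displayed expression for $H$ the two summands coming from $\phi_{2n+1}$ and the summand $\phi_{n+1}(v)(v'-v)$ each acquire the factor $\tfrac1{N^2}$, and every bilinear term $\phi_i(v')\phi_{n+i}(v)$ with $i\in\{2,\ldots,n\}$ acquires two factors $\tfrac1N$. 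Hence $\hat\phi$ is intrinsic $1$-Lipschitz.

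Next I would apply Theorem \ref{Theorem 3.15n>1} to $\hat\phi$ with the given parameter $\eta$. The key point is that $\hat\phi$ is intrinsic $1$-Lipschitz for every value of $N\geq 1$, so the coronization $\mathcal{D}=\mathcal{G}\,\dot{\cup}\,\mathcal{B}$ it produces — which is the coronization asserted in the corollary — satisfies the Carleson packing conditions with a constant $C=C(n,\eta)$ independent of $N$; indeed a coronization is only a partition of the dyadic intervals of $\mathbb{R}$ and does not see the rescaling of the target. For each tree $\mathcal{T}$ in the associated forest $\mathcal{F}$ we obtain an intrinsic Lipschitz map $\hat\phi_{\mathcal{T}}=(\hat L_{\mathcal{T}}+\hat\psi_{\mathcal{T}},\hat\phi_{\mathcal{T},2n+1})$ with $\hat L_{\mathcal{T}}$ linear and $2$-Lipschitz, $\hat\psi_{\mathcal{T}}$ $\eta$-Lipschitz, and $d(\hat\phi(s),\hat\phi_{\mathcal{T}}(s))\leq\eta|Q|$ for $s\in 2Q$, $Q\in\mathcal{T}$. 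I then undo the rescaling componentwise, setting $\phi_{\mathcal{T},i}:=N\hat\phi_{\mathcal{T},i}$ for $i\in\{2,\ldots,n\}\cup\{n+2,\ldots,2n\}$ and $\phi_{\mathcal{T},i}:=N^2\hat\phi_{\mathcal{T},i}$ for $i\in\{n+1,2n+1\}$, and defining $L_{\mathcal{T}},\psi_{\mathcal{T}}$ from $\hat L_{\mathcal{T}},\hat\psi_{\mathcal{T}}$ by the same rescaling of components. Reading the characterization of the first step in reverse shows that $\phi_{\mathcal{T}}$ is again intrinsic Lipschitz (with constant at most $N^2$ times that of $\hat\phi_{\mathcal{T}}$), and since the largest single-component rescaling factor is $N^2$, the linear map $L_{\mathcal{T}}$ is $2N^2$-Lipschitz and $\psi_{\mathcal{T}}$ is $\eta N^2$-Lipschitz.

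It remains to verify the approximation estimate. Using the explicit formula for $d$ recorded after the statement of Theorem \ref{Theorem 3.15n>1}, the horizontal part of $d(\phi(s),\phi_{\mathcal{T}}(s))$ is the Euclidean norm of $\big((\phi_2-\phi_{\mathcal{T},2})(s),\ldots,(\phi_{2n}-\phi_{\mathcal{T},2n})(s)\big)$, whose entries are $N$ or $N^2$ times the corresponding entries for $\hat\phi$ and $\hat\phi_{\mathcal{T}}$, so this part is at most $N^2\,d(\hat\phi(s),\hat\phi_{\mathcal{T}}(s))\leq N^2\eta|Q|$; and the vertical entry
\begin{displaymath}
T(s):=\phi_{2n+1}(s)-\phi_{\mathcal{T},2n+1}(s)+\tfrac12\sum_{i=2}^{n}\phi_i(s)\phi_{\mathcal{T},n+i}(s)-\phi_{\mathcal{T},i}(s)\phi_{n+i}(s)
\end{displaymath}
satisfies $T(s)=N^2\hat T(s)$, where $\hat T$ denotes the same expression formed from $\hat\phi$ and $\hat\phi_{\mathcal{T}}$, by exactly the cancellation of scaling factors used in the first step; since $|\hat T(s)|^{1/2}\leq d(\hat\phi(s),\hat\phi_{\mathcal{T}}(s))\leq\eta|Q|$ we obtain $|T(s)|^{1/2}\leq N\eta|Q|\leq N^2\eta|Q|$. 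Therefore $d(\phi(s),\phi_{\mathcal{T}}(s))\leq\eta N^2|Q|$ for $s\in 2Q$, $Q\in\mathcal{T}$, as claimed. The argument is essentially bookkeeping, and the only step requiring genuine care — the mild main obstacle — is to confirm that the rescaling, which is neither a Heisenberg dilation nor a group automorphism, multiplies each of the three quantities governing both the intrinsic Lipschitz condition and the distance $d$ (the horizontal increment, the correction term $H$, and the vertical entry $T$) by a single clean power of $N$. This is precisely what dictates the weights $\tfrac1N$ and $\tfrac1{N^2}$, and the reason the constant $N^2$ rather than $N$ appears throughout, in accordance with the fact that intrinsic $N$-Lipschitz maps correspond to $(N,\ldots,N,N^2)$-tame maps via Propositions \ref{p:FromIntrLipToTameGENERAL} and \ref{p:FromTameToIntrLip}.
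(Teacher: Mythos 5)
Your proposal is correct and follows the same route as the paper: the anisotropic rescaling $\hat\phi$ (with weights $1/N$ on the components $\phi_2,\ldots,\phi_n,\phi_{n+2},\ldots,\phi_{2n}$ and $1/N^2$ on $\phi_{n+1},\phi_{2n+1}$) is intrinsic $1$-Lipschitz, Theorem \ref{Theorem 3.15n>1} is applied to it with parameter $\eta$, and the approximating data are rescaled back, exactly as in the paragraph preceding Corollary \ref{Theorem 3.15+}. Your explicit verification via Lemma \ref{l:IntrLipExplicit} that the horizontal increments, the term $H$, and the vertical entry $T$ scale by $1/N$, $1/N^2$, and $N^2$ respectively is accurate and in fact supplies the bookkeeping the paper leaves implicit.
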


\begin{proof}[Proof of Theorem \ref{Theorem 3.15n>1}]
 We apply the Lipschitz
corona decomposition stated in Theorem \ref{t:DSAdaptation} and
Corollary \ref{c:DSAdaptBdry} with
 parameter $\delta :=  \eta^2/(100 n)$
 to the $1$-Lipschitz map  $\psi:=( \phi_{2} , \dots , \phi_{2n}):\R \to \R ^{2n-1} $.
 Hence, there are a coronization with Carleson packing constant
 depending on $n$ and $\eta$, and an associated forest $\mathcal{F}$ of trees. We fix $\mathcal{T}\in \mathcal{F}$, and consider the top
interval $Q(\mathcal{T})=[x,y]$ with $x<y$. Then there exists a
 $\delta$-Lipschitz map $\psi_{\mathcal{T}}= ( \psi_{\mathcal{T},2} , \dots , \psi_{\mathcal{T},2n}):\R \to \R ^{2n-1} $   and a linear 2-Lipschitz map $L_{\mathcal{T}}=( L_{\mathcal{T},2} , \dots , L_{\mathcal{T},2n}):\R \to \R ^{2n-1}$  such that
\begin{equation}\label{FO3.24A}
|(\phi_2,\dots, \phi_{2n}) (s) - [ L_\mathcal{T} + \psi_\mathcal{T}] (s)| \leq \delta |Q|, \quad s \in 2Q, \, Q \in \mathcal{T}.
\end{equation}
In addition, we may assume by Corollary \ref{c:DSAdaptBdry} that
\begin{equation}\label{FO3.24}
\phi_i (s) = [ L_{\mathcal{T},i} + \psi_{\mathcal{T},i}] (s),
\quad \text{for all }s \in E\cup \bigcup_{S\in
\mathcal{S}(\mathcal{T})}\partial S, \quad i=2,\dots , 2n.
\end{equation}
Here, as before, $\mathcal{S} (\mathcal{T})$ is the collection of
minimal intervals in $\mathcal{T}$ (possibly an empty collection)
and
\begin{equation*}
E =  Q(\mathcal{T})\setminus \bigcup _{ S \in \mathcal{S}
(\mathcal{T})} S.
\end{equation*}
Now we would like to produce an intrinsic Lipschitz function
\begin{equation}\label{eq:phiTDef}
\phi_\mathcal{T} =(L_{\mathcal{T},2} + \psi_{\mathcal{T},2} ,
\ldots ,L_{\mathcal{T},2n} + \psi_{\mathcal{T},2n},
\phi_{\mathcal{T}, 2n+1}):\V \to \W \end{equation} satisfying the
claims stated in Theorem \ref{Theorem 3.15n>1}. The challenge is
to find the last component of $\phi_{\mathcal{T}}$ so that the
intrinsic Lipschitz and approximation property hold, and this will
require some changes in the terms $L_{\mathcal{T},n+1} +
\psi_{\mathcal{T},n+1}$ (but not in the other components).

For $S=[a,b] \in \mathcal{S} (\mathcal{T})$ fixed, we will modify
the restriction of $\psi_{\mathcal{T},n+1}$ to $\frac 1 2 S
=[s_1,s_2]$ with $s_1\leq s_2$, which is the interval with the
same center but half the length as $S$. The property of $\frac 1 2
S$ needed in the future is that if $Q \in \mathcal{T}$ with $|Q| <
|S|$, then
\begin{equation}\label{3.26}
2Q \cap \tfrac{ 1}{ 2} S = \emptyset .
\end{equation}

Analogously as in the proof of \cite[Theorem 3.15]{fssler2019singular}, we add to
$\psi_{\mathcal{T},n+1}$ a suitable ``correction term''
$\xi_S : S \to \R$ in order that
\begin{enumerate}
\item $\xi_S (t)=0, \quad \forall t \ne [s_1,s_2]$;
\item it holds
\begin{align}\label{FO3.27}
\int_a^b - \xi _S (r)\, dr&= \int _a^b - \phi _{n+1} (r) +
\psi_{\mathcal{T},n+1}(r) + L_{\mathcal{T},n+1}(r)\\&+\frac 12
\sum_{i=2}^n  \phi_i (r) \dot \phi_{n+i} (r)-\dot \phi_i (r)
\phi_{n+i}(r )  -  \phi_{\mathcal{T},i} (r) \dot
\phi_{\mathcal{T},n+i} (r)-\dot \phi_{\mathcal{T},i} (r)
\phi_{\mathcal{T},n+i}(r )  \, d r.\notag
\end{align}
\end{enumerate}
The idea behind \eqref{FO3.27} is the following. As suggested by
\eqref{eq:phiTDef}, we will define
\begin{displaymath}
\phi_{\mathcal{T},i}:=
\psi_{\mathcal{T},i}+L_{\mathcal{T},i},\quad \text{for
}i=2,\ldots,n,n+2,\ldots,2n,
\end{displaymath}
but for $i=n+1$ and $S\in \mathcal{S}(\mathcal{T})$, we set
\begin{displaymath}
\phi_{\mathcal{T},n+1}|_{S}:=
\psi_{\mathcal{T},n+1}|_S+\xi_S+L_{\mathcal{T},n+1}|_S,
\end{displaymath}
while $\phi_{\mathcal{T},n+1}|_{E}:=\psi_{\mathcal{T},n+1}|_E +
L_{\mathcal{T},n+1}|_E$. The function $\xi_S$ allows us to match
$\phi_{\mathcal{T},2n+1}$ with $\phi_{2n+1}$ in endpoints of
minimal intervals. Up to a sign change, the desired intrinsic
Lipschitz property of $\phi_{\mathcal{T}}$ is equivalent to the
tameness condition. Tame maps on intervals can be characterized as
in Proposition \ref{propositionk=1}, so we will obtain
$\phi_{\mathcal{T},2n+1}$ by integrating an expression involving
the components
$\phi_{\mathcal{T},2},\ldots,\psi_{\mathcal{T},2n}$. Then
\eqref{FO3.27} ensures that the thus defined
$\phi_{\mathcal{T},2n+1}$ agrees with $\phi_{2n+1}$ in endpoints
of the minimal intervals $S\in\mathcal{S}(\mathcal{T})$.

To obtain \eqref{FO3.27}, we define $\xi_S : S \to \R$ as
\begin{equation*}
\begin{aligned}
 \xi _S(t) &:= \left\{
\begin{array}{l}
4c(t-s_1), \qquad \,\, \quad  \mbox{ for } t \in [s_1, \frac{s_1+s_2}{2}  ],\\
4c(s_2-t), \quad \qquad \,\,\mbox{ for } t \in (\frac{s_1+s_2}{2} , s_2],\\
0, \quad \qquad \qquad \qquad \mbox{  otherwise }
\end{array}
\right.
\end{aligned}
\end{equation*}
where $c\in \R$ is such that \eqref{FO3.27} holds. Since
\begin{equation*}
\begin{aligned}
\int_S \xi _S (r)\, dr & = \frac{c |S|^2}{4},
\end{aligned}
\end{equation*}
and $S=[a,b]$, the requirement \eqref{FO3.27} means that
\begin{equation}\label{ceq32}
\begin{aligned}
-c = \frac 4 {(b-a)^2}& \int _a^b - \phi _{n+1} (r) +
\psi_{\mathcal{T},n+1}(r) + L_{\mathcal{T},n+1}(r)\\& +\frac 12
\sum_{i=2}^n  \phi_i (r) \dot \phi_{n+i} (r)-\dot \phi_i (r)
\phi_{n+i}(r )    -  \phi_{\mathcal{T},i} (r) \dot
\phi_{\mathcal{T},n+i} (r)+\dot \phi_{\mathcal{T},i} (r)
\phi_{\mathcal{T},n+i}(r )  \, d r.
\end{aligned}
\end{equation}
The $\eta$-Lipschitz continuity of
\begin{equation}\label{eq:PsiModif}
(\psi_{\mathcal{T},2},\ldots,\psi_{\mathcal{T},n},\psi_{\mathcal{T},n+1}+\xi_S,
\psi_{\mathcal{T},n+2},\ldots,\psi_{\mathcal{T},2n})
\end{equation}
on $S$ will follow from a bound on $|c|$. We claim that $|c|\leq
24 n \delta$; indeed, by \eqref{ceq32},
\begin{equation}\label{boundc21}
\begin{aligned}
-c & = \frac 4 {(b-a)^2} \int _a^b - \phi _{n+1} (r) + \psi_{\mathcal{T},n+1}(r) + L_{\mathcal{T},n+1}(r) \, dr \\
&  + \frac 2 {(b-a)^2}  \sum_{i=2}^n \int _a^b  ( \phi_i (r) - \phi_{\mathcal{T},i}(r)) \dot \phi_{n+i} (r)-  ( \phi_{n+i} (r) - \phi_{\mathcal{T},n+i}(r)) \dot \phi_{i} (r)\, dr \\
&  +\frac 2 {(b-a)^2}  \sum_{i=2}^n \int _a^b  ( \phi_i (r) - \phi_{\mathcal{T},i}(r)) \dot \phi_{\mathcal{T}, n+i} (r)-  ( \phi_{n+i} (r) - \phi_{\mathcal{T},n+i}(r)) \dot \phi_{\mathcal{T}, i} (r)\, dr \\
&  + \frac 2 {(b-a)^2}  \sum_{i=2}^n \int _a^b   \phi_{\mathcal{T},i}(r) \dot \phi_{ n+i} (r)
- \phi_{\mathcal{T},n+i}(r) \dot \phi_{ i} (r) - \phi_{i}(r) \dot \phi_{\mathcal{T}, n+i} (r)
+ \phi_{n+i}(r) \dot \phi_{\mathcal{T}, i} (r) \, dr \\
&  =: I_1+I_2+I_3+I_4.
\end{aligned}
\end{equation}
Using \eqref{FO3.24A}, we obtain that $|I_1|\leq 4 \delta $;
moreover, using again \eqref{FO3.24A} and recalling that
$\psi_{\mathcal{T}} $ is a $\delta$-Lipschitz map with $\delta <1$
and $L_{\mathcal{T}}$ is a linear 2-Lipschitz map, we have that
$|I_i|\leq 12 (n-1) \delta$ for $i=2,3$. Finally, integrating by
parts, and using $\phi_i(s)= \phi_{\mathcal{T},i}(s)$ for $s\in
\{a,b\}$ and $i=2,\ldots,2n$, we get that $I_4=0$.
 Hence $|c|\leq 24 n \delta$, as desired.
Consequently,  we get that $\xi_S$ is $96 n \delta$-Lipschitz
with
 $\|\xi _S\|_{L^\infty} \leq 24 n \delta |S|$.
We make analogous modifications inside all intervals  $S \in
\mathcal{S} (\mathcal{T})$. Recalling that we have chosen $\delta$
so that $100 n \delta = \eta^2\leq \eta$, we
 obtain an $\eta$-Lipschitz map on $\mathbb{R}$, piecewise
defined on $S$ as in \eqref{eq:PsiModif}.

\medskip

We next show that the modified map still satisfies
\eqref{FO3.24A}, albeit with a larger constant than $\delta$.
Indeed, for $Q\in \mathcal{T}$ it suffices to check the condition
for $s\in 2Q$ that belong to $\frac{1}{2}S$ for some $S\in
\mathcal{S}(\mathcal{T})$, as this is the only place where we have
done a modification. So assume $s\in 2Q \cap \frac{1}{2}S$. Then
$|S|\leq |Q|$ by \eqref{3.26}, and \eqref{FO3.24A} yields
\begin{equation}\label{FO3.24B}
|(\phi_2,\ldots,\phi_{2n})(s)-(\phi_2,\ldots,\phi_{2n})(s)| \leq
25 n \delta |Q| \leq \eta |Q|.
\end{equation}

Now we consider the last component of the approximation map
$\phi_{\mathcal{T}}$ of $\phi$. For $Q(\mathcal{T})=[x,y]$, we
define
\begin{equation*}
\phi _{\mathcal{T},2n+1} (s) := \phi _{2n+1} (x) + \int_x^s -
\phi_{\mathcal{T},n+1} (r) +\frac 12 \sum_{i=2}^n
\phi_{\mathcal{T},i} (r) \dot \phi_{\mathcal{T},n+i} (r)- \dot
\phi_{\mathcal{T},i} (r) \phi_{\mathcal{T},n+i} (r)\, dr\\,
\end{equation*}
for all $s \in [x,y].$ By Proposition \ref{propositionk=1}, $\phi
_{\mathcal{T}}: \V \to \W$ is an intrinsic Lipschitz map. The next
step is to show
\begin{equation}\label{claim1parteequi}
\phi_{2n+1} (s)
 =
\phi _{\mathcal{T},2n+1} (s),
\end{equation}
for $s \in E \cup \bigcup _{S \in \mathcal{S} (\mathcal{T})}
\partial S$. By construction, this is equivalent to verifying that for
 $s \in E \cup \bigcup _{S \in \mathcal{S} (\mathcal{T})} \partial S$ and $Q(\mathcal{T})=[x,y]$ we have
\begin{equation*}
 \int_{x}^s  \dot \phi_{2n+1}(r) \, dr = \int_{x}^s  \dot \phi_{\mathcal{T},2n+1} (r)\,
 dr.
\end{equation*}
 We recall that $E$ is a measurable set because $E=
Q(\mathcal{T})\setminus \bigcup _{ S \in \mathcal{S}
(\mathcal{T})} S$ and $\mathcal{S} (\mathcal{T})$ is a countable
family of intervals. Moreover,
\begin{equation*}
\phi _{\mathcal{T},2n+1} (s)
=  \phi_{2n+1} (x)
 + \int_{ E  \cap [x,s]} \dot \phi _{\mathcal{T},2n+1} (r)\, dr  + \sum_{ S \in \mathcal{S} (\mathcal{T})}  \int_{   S \cap [x,s]} \dot \phi _{\mathcal{T},2n+1} (r)\, dr.\\
\end{equation*}
If $E$ is a Lebesgue null set, then the integral over $E$ is not
relevant. On the other hand, the derivatives of
$\phi_i,\phi_{\mathcal{T},i}$, $i=2,\ldots,2n+1$ exist almost
everywhere, and if $E$ has positive measure, then almost every
point in $E$ is a Lebesgue density point of $E$. Since
$\phi_i(s)=\phi_{\mathcal{T},i}(s)$ for all $s\in E$ and
$i=2,\ldots,2n$, it follows that $\dot \phi_{i}(s)=\dot
\phi_{\mathcal{T},i} (s)$ for almost every $s\in E$ and so
$
 \dot \phi _{\mathcal{T},2n+1} (s)=  \dot \phi _{2n+1}(s)$ for almost every $s\in E$.
Moreover,
\begin{equation}\label{claim1parteS}
\begin{aligned}
 \sum_{ S \in \mathcal{S} (\mathcal{T})}  \int_{   S \cap [x,s]} \dot \phi _{\mathcal{T},2n+1} (r)\, dr = \sum_{ S \in \mathcal{S} (\mathcal{T})}  \int_{   S \cap [x,s]} \dot \phi _{2n+1} (r)\, dr .
\end{aligned}
\end{equation}
Indeed, by the choice of $s\in E \cup \bigcup \partial S$ we have
two cases to consider: $ S \cap [x,s]= S$ or $ S \cap [x,s]
=\emptyset.$ The latter intervals $S$ can be ignored, and for the
former, the integrals on the left and on the right-hand side of
\eqref{claim1parteS} agree, by \eqref{FO3.27}. This proves
\eqref{claim1parteequi}.

\medskip

Finally, it remains to check the approximation condition
\eqref{FO3.19}. Having already established \eqref{FO3.24B}, the
only nontrivial inequality is
\begin{equation}\label{FO3.29finale}
A:=\left| \phi _{2n+1} (s)- \phi _{\mathcal{T},2n+1} (s) + \frac
12 \sum_{i=2}^n  - \phi_{\mathcal{T},i} (s) \phi_{n+i}(s) +
\phi_{i} (s) \phi_{\mathcal{T},n+i}(s) \right| \leq \eta ^2 |Q|^2,
\end{equation}
for every $Q\in \mathcal{T}$, $s\in 2Q$. We have two different
cases: $s\in E$ and $s\notin E.$ Firstly, for $s\in E$, the
left-hand side of \eqref{FO3.29finale} vanishes by \eqref{FO3.24}
and \eqref{claim1parteequi}, and so the inequality holds trivially
true. Secondly, for $s\notin E$, as in the proof of
\cite[Proposition 3.6]{fssler2019singular}, we know that there is
$\tilde s \in Q \cap (E \cup \bigcup _{ S \in \mathcal{S}
(\mathcal{T})} \partial S)$ such that $|s-\tilde s | \leq |Q|$.
Since $\phi _{\mathcal{T},n+1} (\tilde s) = \phi_{n+1}(\tilde s)$,
we can estimate as follows:
\begin{align*}
A
 &\leq  \Biggl|\phi _{2n+1} (s)-\phi _{2n+1} (\tilde s)
 - \phi _{\mathcal{T},2n+1} (s) +\phi _{\mathcal{T},2n+1} (\tilde s)  + \tfrac{ 1}{2} \sum_{i=2}^n  - \phi_{\mathcal{T},i} (s) \phi_{n+i}(s) + \phi_{i} (s) \phi_{\mathcal{T},n+i}(s) \Biggl| \\
&=  \Biggl| \int_{\tilde s}^s  - \phi _{n+1} (r) +
\phi _{\mathcal{T},n+1} (r)
+\tfrac{ 1}{2} \sum_{i=2}^n  \phi_i (r) \dot \phi_{n+i} (r)-\dot \phi_i (r) \phi_{n+i}(r ) \\
& \quad\quad\quad\quad\quad\quad\quad\quad\quad\quad\quad\quad\quad- \tfrac{ 1}{2} \sum_{i=2}^n  \phi_{\mathcal{T},i} (r) \dot \phi_{\mathcal{T},n+i} (r) -  \phi_{\mathcal{T},n+i} (r) \dot \phi_{\mathcal{T},i} (r)\, dr\\
&\quad +\tfrac{ 1}{2} \sum_{i=2}^n
 \phi_{\mathcal{T},n+i} ( s)  \phi_{i} (s)
- \phi_{\mathcal{T},i} ( s)  \phi_{ n+i} ( s) \Biggl|.
\end{align*}
For convenience, let us denote the integral in the above
expression by ``$I$'', so that
\begin{equation}\label{eq:EstWithI}
A\leq \left|I + \tfrac{ 1}{2} \sum_{i=2}^n
 \phi_{\mathcal{T},n+i} ( s)  \phi_{i} (s)
- \phi_{\mathcal{T},i} ( s)  \phi_{ n+i} ( s) \right|.
\end{equation}
Using similar computations as in the bound for $c$, see
\eqref{boundc21}, we find
\begin{align*}
I= & \int_{\tilde s}^s - \phi _{n+1} (r) + \psi_{\mathcal{T},n+1}(r)  \, dr \\
&  + \tfrac{1}{2}  \sum_{i=2}^n \int_{\tilde s}^s  ( \phi_i (r) - \phi_{\mathcal{T},i}(r)) \dot \phi_{n+i} (r)-  ( \phi_{n+i} (r) - \phi_{\mathcal{T},n+i}(r)) \dot \phi_{i} (r)\, dr \\
&  +\tfrac{1}{2} \sum_{i=2}^n \int_{\tilde s}^s  ( \phi_i (r) - \phi_{\mathcal{T},i}(r)) \dot \phi_{\mathcal{T}, n+i} (r)-  ( \phi_{n+i} (r) - \phi_{\mathcal{T},n+i}(r)) \dot \phi_{\mathcal{T}, i} (r)\, dr \\
&  + \tfrac{1}{2}  \sum_{i=2}^n \int_{\tilde s}^s
\phi_{\mathcal{T},i}(r) \dot \phi_{ n+i} (r) -
\phi_{\mathcal{T},n+i}(r) \dot \phi_{ i} (r) - \phi_{i}(r) \dot
\phi_{\mathcal{T}, n+i} (r) + \phi_{n+i}(r) \dot
\phi_{\mathcal{T}, i} (r) \, dr\\
=:& J_1 + J_2 + J_3 + J_4.
\end{align*}
Notice that $[\tilde s, s] \subset 2Q$ (or $[s, \tilde s] \subset
2Q$), so $|\phi_{\mathcal{T},n+1}(r)-\phi_{n+1}(r)|\leq 25 n
\delta |Q|$ for all $r\in [\tilde s,s]$ by \eqref{FO3.24B}, and
$|\phi_{\mathcal{T},i}(r)-\phi_{i}(r)|\leq \delta |Q|$ for $i\in
\{2,\ldots,2n\}\setminus \{n+1\}$ by the property coming from
Theorem \ref{t:DSAdaptation}. Using also that $|s-\tilde s|\leq
|Q|$, we obtain the desired estimates for the first three
summands: $ |J_1|+|J_2|+|J_3| \leq 50 n \delta |Q|^2. $ The term
``$J_4$'' might look problematic at first since $\Phi(s)$ does not
necessarily agree with $\Phi_{\mathcal{T}}(s)$. However, if we
combine it first with the second summand in \eqref{eq:EstWithI},
then cancellation occurs by partial integration:
\begin{align*}
J_4 + \tfrac{ 1}{2} \sum_{i=2}^n
 \phi_{\mathcal{T},n+i} ( s)  \phi_{i} (s)
- \phi_{\mathcal{T},i} ( s)  \phi_{ n+i} ( s)
=\tfrac{1}{2}\sum_{i=2}^n
-\phi_{\mathcal{T},i}(\widetilde{s})\phi_{n+i}(\widetilde{s})
+\phi_{\mathcal{T},n+i}(\widetilde{s})\phi_i(\widetilde{s})
.
\end{align*}
Since $\Phi(\widetilde s) = \Phi_{\mathcal{T}}(\widetilde s)$, the
expression on the right vanishes. Hence we obtain that
\begin{equation*}
A\leq 50 n \delta |Q|^2.
\end{equation*}
Finally, we recall that $10 0 n \delta = \eta^2$, so
\eqref{FO3.29finale} holds, as desired. This concludes the proof.
\end{proof}

\bibliographystyle{plain}%

\bibliography{references}

\def\cprime{$'$}
\begin{thebibliography}{10}

\bibitem{antonelli2020intrinsically}
Gioacchino Antonelli and Andrea Merlo.
\newblock Intrinsically {L}ipschitz functions with normal targets in {C}arnot
  groups, 2020.
\newblock arXiv:2006.02782.

\bibitem{antonellimerlo2}
Gioacchino Antonelli and Andrea Merlo.
\newblock On rectifiable measures in {C}arnot groups: structure theory, 2020.
\newblock arXiv:2009.13941.

\bibitem{MR3777638}
D.~Azagra, E.~Le~Gruyer, and C.~Mudarra.
\newblock Explicit formulas for {$C^{1,1}$} and {$C_{\rm conv}^{1,\omega}$}
  extensions of 1-jets in {H}ilbert and superreflexive spaces.
\newblock {\em J. Funct. Anal.}, 274(10):3003--3032, 2018.

\bibitem{MR2955184}
Zolt\'{a}n~M. Balogh, Katrin F\"{a}ssler, Pertti Mattila, and Jeremy~T. Tyson.
\newblock Projection and slicing theorems in {H}eisenberg groups.
\newblock {\em Adv. Math.}, 231(2):569--604, 2012.

\bibitem{MR2545863}
Zolt\'{a}n~M. Balogh and Katrin~S. F\"{a}ssler.
\newblock Rectifiability and {L}ipschitz extensions into the {H}eisenberg
  group.
\newblock {\em Math. Z.}, 263(3):673--683, 2009.

\bibitem{MR2882877}
Alexander Brudnyi and Yuri Brudnyi.
\newblock {\em Methods of geometric analysis in extension and trace problems.
  {V}olume 1}, volume 102 of {\em Monographs in Mathematics}.
\newblock Birkh\"{a}user/Springer Basel AG, Basel, 2012.

\bibitem{MR2868143}
Alexander Brudnyi and Yuri Brudnyi.
\newblock {\em Methods of geometric analysis in extension and trace problems.
  {V}olume 2}, volume 103 of {\em Monographs in Mathematics}.
\newblock Birkh\"{a}user/Springer Basel AG, Basel, 2012.

\bibitem{DS1}
G.~David and S.~Semmes.
\newblock Singular integrals and rectifiable sets in {${\bf R}^n$}: Au-del\`a
  des graphes lipschitziens.
\newblock {\em Ast\'erisque}, (193):152, 1991.

\bibitem{MR1251061}
G.~David and S.~Semmes.
\newblock {\em Analysis of and on uniformly rectifiable sets}, volume~38 of
  {\em Mathematical Surveys and Monographs}.
\newblock American Mathematical Society, Providence, RI, 1993.

\bibitem{FasslerMSc}
Katrin F{\"a}ssler.
\newblock Extending {L}ipschitz maps from {E}uclidean spaces into {H}eisenberg
  groups.
\newblock Master's thesis, University of Bern, 2007.

\bibitem{fssler2019singular}
Katrin F{\"a}ssler and Tuomas Orponen.
\newblock Singular integrals on regular curves in the {H}eisenberg group, 2019.
\newblock arXiv:1911.03223.

\bibitem{FSSC}
B.~Franchi, R.~Serapioni, and F.~Serra~Cassano.
\newblock Rectifiability and perimeter in the {H}eisenberg group.
\newblock {\em Math. Ann.}, 321(3):479--531, 2001.

\bibitem{FSS}
B.~Franchi, R.~Serapioni, and F.~Serra~Cassano.
\newblock Intrinsic {L}ipschitz graphs in {H}eisenberg groups.
\newblock {\em J. Nonlinear Convex Anal.}, 7(3):423--441, 2006.

\bibitem{MR2836591}
Bruno Franchi, Raul Serapioni, and Francesco Serra~Cassano.
\newblock Differentiability of intrinsic {L}ipschitz functions within
  {H}eisenberg groups.
\newblock {\em J. Geom. Anal.}, 21(4):1044--1084, 2011.

\bibitem{MR3511465}
Bruno Franchi and Raul~Paolo Serapioni.
\newblock Intrinsic {L}ipschitz graphs within {C}arnot groups.
\newblock {\em J. Geom. Anal.}, 26(3):1946--1994, 2016.

\bibitem{MR101294}
Georges Glaeser.
\newblock \'{E}tude de quelques alg\`ebres tayloriennes.
\newblock {\em J. Analyse Math.}, 6:1--124; erratum, insert to 6 (1958), no. 2,
  1958.

\bibitem{MR1421823}
Mikhael Gromov.
\newblock Carnot-{C}arath\'{e}odory spaces seen from within.
\newblock In {\em Sub-{R}iemannian geometry}, volume 144 of {\em Progr. Math.},
  pages 79--323. Birkh\"{a}user, Basel, 1996.

\bibitem{MR2177410}
Juha Heinonen.
\newblock {\em Lectures on {L}ipschitz analysis}, volume 100 of {\em Report.
  University of Jyv\"{a}skyl\"{a} Department of Mathematics and Statistics}.
\newblock University of Jyv\"{a}skyl\"{a}, Jyv\"{a}skyl\"{a}, 2005.

\bibitem{MR2200122}
Urs Lang and Thilo Schlichenmaier.
\newblock Nagata dimension, quasisymmetric embeddings, and {L}ipschitz
  extensions.
\newblock {\em Int. Math. Res. Not.}, (58):3625--3655, 2005.

\bibitem{MR2659687}
Valentino Magnani.
\newblock Contact equations, {L}ipschitz extensions and isoperimetric
  inequalities.
\newblock {\em Calc. Var. Partial Differential Equations}, 39(1-2):233--271,
  2010.

\bibitem{NY}
Assaf Naor and Robert Young.
\newblock Vertical perimeter versus horizontal perimeter.
\newblock {\em Ann. of Math. (2)}, 188(1):171--279, 2018.

\bibitem{naor2020foliated}
Assaf Naor and Robert Young.
\newblock Foliated corona decompositions, 2020.
\newblock arXiv:2004.12522.

\bibitem{orponen2020subelliptic}
Tuomas Orponen and Michele Villa.
\newblock Sub-elliptic boundary value problems in flag domains, 2020.
\newblock arXiv:2006.08293.

\bibitem{10.2307/24896220}
Stephen Semmes.
\newblock Analysis vs. geometry on a class of rectifiable hypersurfaces in r n.
\newblock {\em Indiana University Mathematics Journal}, 39(4):1005--1035, 1990.

\bibitem{vittone2020lipschitz}
Davide Vittone.
\newblock Lipschitz graphs and currents in {H}eisenberg groups, 2020.
\newblock arXiv:2007.14286.

\bibitem{MR2729637}
Stefan Wenger and Robert Young.
\newblock Lipschitz extensions into jet space {C}arnot groups.
\newblock {\em Math. Res. Lett.}, 17(6):1137--1149, 2010.

\end{thebibliography}

\end{document}